\newcommand{\Z}{\mathbf{Z}}
\newcommand{\R}{\mathbf{R}}
\newcommand{{\ba}}{\bf a}
\newcommand{\ve}{\varepsilon}
\newcommand{\la}{\lambda}
\newcommand{\La}{\Lambda}
\newcommand{\ga}{\gamma}
\newcommand{\pa}{\partial}
\newcommand{\ra}{\rightarrow}
\newcommand{\del}{\delta}
\newcommand{\cd}{\cdot}
\newcommand{\al}{\alpha}
\newcommand{\be}{\begin{equation}}
\newcommand{\ee}{\end{equation}}
\newcommand{\nn}{\nonumber}
\newtheorem{lem}{Lemma}{\bf}{\it}
\newtheorem{rem}{Remark}{\it}{\rm}
\newtheorem{theorem}{Theorem}
\numberwithin{theorem}{section}
\numberwithin{lem}{section}
\numberwithin{equation}{section}
\numberwithin{proposition}{section}
\numberwithin{corollary}{section}
\title[Diffusive LSW Equation]{On a diffusive version of the Lifschitz-Slyozov-Wagner Equation }
\author{Joseph G. Conlon}
\address{University of Michigan\\ Department of Mathematics\\ Ann Arbor,
  MI 48109-1109}
\email{conlon@umich.edu}
\keywords{nonlinear pde, coarsening}
\subjclass{35F05,  82C70, 82C26}
\begin{document}

\maketitle

\begin{abstract}
This paper is concerned with the Becker-D\"{o}ring (BD) system of equations and their relationship to the Lifschitz-Slyozov-Wagner (LSW) equations. A diffusive version of the LSW equations is derived from the BD equations. Existence and uniqueness theorems for this diffusive LSW system are proved. The major part of the paper is taken up with proving that solutions of the diffusive LSW system converge in the zero diffusion limit to solutions of the classical LSW system. In particular, it is shown that the rate of coarsening for the diffusive system converges to the rate of coarsening  for the classical system.

\end{abstract}

\section{Introduction.}
In this paper we shall be concerned with the Becker-D\"{o}ring (BD) system of equations \cite{bd} and their relationship to the Lifschitz-Slyozov-Wagner (LSW) equations \cite{ls,w}. The BD equations describe the time evolution for a mean field model of a population of particle clusters. Particle clusters are characterized by their volume $\ell$ which may take integer values $\ell=1,2...$. The monomers of volume $\ell=1$ play a distinctive role since they mediate interactions between particle clusters of different volumes. In particular, a cluster with volume $\ell$ can become a cluster of volume $\ell+1$ by addition of a monomer, or become a cluster of volume $\ell-1$ by evaporation of a monomer.  Finally, conservation of mass imposes a constraint which is non-local in $\ell$. 

The BD equations are determined by the rates at which a cluster of volume $\ell$ becomes a cluster of volume $\ell+1$ or $\ell-1$.  If $c_1(t)$ is the monomer density at time $t$, then a cluster of volume $\ell$ becomes a cluster of volume $\ell+1$ at rate $a_\ell c_1(t)$,  and a cluster of volume $\ell$ evaporates a monomer at rate $b_\ell$ to become a cluster of volume $\ell-1$. The net effect of these two processes yields a flux $J_\ell$  of $\ell$-clusters to $\ell+1$-clusters which depends on the density $c_\ell(t)$ of $\ell$-clusters and $c_{\ell+1}(t)$ of $\ell+1$-clusters, as well as the monomer density $c_1(t)$. In the BD model it is given by the formula $J_\ell=a_\ell c_1c_\ell-b_{\ell+1}c_{\ell+1}$. The BD equations for the time evolution of the system are therefore given by
\begin{eqnarray}
\frac{dc_\ell}{dt}&=&J_{\ell-1}-J_\ell, \quad \ell\ge2, \label{A1}  \\
\sum_{\ell=1}^\infty \ell c_\ell(t) &=&\rho. \label{B1}
\end{eqnarray}

Global existence and uniqueness theorems for the BD system (\ref{A1}), (\ref{B1}) were proven in the seminal paper of Ball et al \cite{bcp}  under fairly mild assumptions on the rates $a_\ell, \ b_\ell, \ \ell=1,2...$, the main one being that $a_\ell$ should grow at a sub-linear rate as $\ell\ra \infty$.

It is easy to see by solving $J_\ell=0, \ \ell\ge 1$, that (\ref{A1}), (\ref{B1}) has a family of equilibrium solutions parametrized by the monomer density $c_1$.  In order to classify the equilibrium solutions one needs detailed knowledge of the asymptotic behavior of  the rates $a_\ell, \ b_\ell$ as $\ell\ra \infty$. In this paper we shall assume that $a_\ell, \ b_\ell$ are given by the formulae
\be \label{O1}
a_\ell=a_1\ell^{1/3}, \ \ a_1>0; \qquad b_\ell=a_\ell(z_s+q/\ell^{1/3}),  \ \ z_s, \ q>0.
\ee
These values for $a_\ell, \ b_\ell$ may be obtained by deriving the BD equations as mean field equations for a $3$ dimensional microscopic system \cite{p}.

With $a_\ell, \ b_\ell$  given by (\ref{O1}), one sees that for any $c_1$ satisfying $0<c_1\le z_s$ there is an equilibrium solution $c_\ell=Q_\ell c_1^\ell, \ \ell=1,2...$.  For large $\ell$ the coefficient $Q_\ell$ has the asymptotic behavior
\be \label {C1}
Q_\ell \sim z_s^{-(\ell-1)}\frac{1}{\ell^{1/3}}\exp\left[-\frac{3q}{2z_s}\ell^{2/3}\right], \quad  \ell \ra \infty.
\ee
It follows from (\ref{C1}) that there is an equilibrium solution  corresponding to $c_1=z_s$.  We shall denote its density  (\ref{B1})   by $\rho_{\rm crit}$.
It was shown in \cite{bcp} that if $\rho\le \rho_{\rm crit}$, then the solution of (\ref{A1}), (\ref{B1}) converges strongly at large time to the corresponding equilibrium solution in the sense that
$$ \lim_{t\ra \infty} \sum_{\ell=1}^\infty \ell |c_\ell(t)-Q_\ell c_1^\ell| = 0. $$
If $\rho> \rho_{\rm crit}$ then $c_\ell(t)$ converges weakly to the equilibrium solution with maximum density, 
\be \label{D1}
 \lim_{t\ra \infty} c_\ell(t)=Q_\ell z_s^\ell, \ \ell\ge 1.
\ee
Equation (\ref{D1}) shows that for $\rho> \rho_{\rm crit}$  particles from the excess density $\rho- \rho_{\rm crit}$ over equilibrium  concentrate at large time in clusters of increasingly large volume. This is the phenomenon of {\it coarsening}, and it is an important problem to understand the rate at which the volume of clusters from the excess density increase with time.

A mechanism for determining this was proposed by Penrose in \cite{p}, where he argued that the large time behavior of clusters from the excess density is governed by the LSW equations,
\begin{eqnarray}
\frac{\pa c(x,t)}{\pa t}&=& \frac{\pa}{\pa x}\left[\left\{1-\left(\frac{x}{L(t)}\right)^{1/3}\right\}c(x,t)\right],  \quad x>0, \label{E1}  \\
\int_0^\infty x c(x,t) dx&=& \rho- \rho_{\rm crit}. \label{F1}
\end{eqnarray}
The parameter $L(t)$ in (\ref{E1}),  which turns out to be a measure of the average cluster {\it volume}, is uniquely determined by the conservation law (\ref{F1}). A mathematically rigorous relationship between solutions of the BD system (\ref{A1}), (\ref{B1}) and LSW system  (\ref{E1}), (\ref{F1}) was established in  \cite{laurmisc}  for the case $z_s=0$  when $\rho_{\rm crit}=0$, and in \cite{n} for the case $z_s>0$  when $\rho_{\rm crit}>0$.  Much still remains to be done however, in order to understand this in the precise way envisioned by Penrose \cite{p}. 

In this paper we take a rather different approach from  \cite{laurmisc, n}  in attempting to establish a relationship between solutions of the BD and LSW systems. Following  \cite{v}, we first observe that the flux $J_\ell$ can be written as
$$ J_\ell= [b_\ell c_\ell- b_{\ell+1}c_{\ell+1}] -a_1q[1-\{\ell/L(t)\}^{1/3}]c_\ell,$$
where $L(t)$ is given by the formula
\be \label{G1}
 c_1(t)=z_s+q/L(t)^{1/3}.
 \ee
Observe from (\ref{D1}) that the function $L(t)$ in (\ref{G1}) satisfies $ L(t)\ra\infty$ as $t\ra\infty$. 
Denoting by $D$ the forward difference operator and $D^*$ its adjoint, then  (\ref{A1}) is given by
\be  \label{H1}
\frac{\pa c(\ell,t)}{\pa t}=-D^*D[b_\ell c(\ell,t)] -a_1qD^*[(1-\{\ell/L(t)\}^{1/3})c(\ell,t)], \quad \ell\ge 2,
\ee  
where $c(\ell,t)=c_\ell(t)$ in (\ref{A1}). Evidently if we drop the first term on the RHS of (\ref{H1}) we obtain a discrete version of (\ref{E1}). 

We still need to account for the discrepancy in the conservation laws (\ref{B1}) for the BD system and (\ref{F1}) for the LSW system. Now from (\ref{D1}) the bulk of the equilibrium density $\rho_{\rm crit}$ is concentrated in clusters of volume $O(1)$ as $t\ra \infty$. Further, by choosing $\ell^*$ large enough  we may make $c(\ell^*,t)$ arbitrarily small for all large t,  and the density concentrated in clusters with volume larger than $\ell^*$ arbitrarily close to $\rho-\rho_{\rm crit}$.   It may seem reasonable therefore, that in order to understand the large time behavior of the BD system, one is justified in setting
$c(\ell^*,t)=0, \ t \ge T$, for some large but fixed $\ell^*$. This assumption is evidently unphysical since it cuts off interaction between clusters of volume less than $\ell^*$ and clusters of volume greater than $\ell^*$. One consequence of it is that the resulting conservation of mass for clusters of size greater than $\ell^*$ implies  $ c_1(t)>z_s$ for $t>T$. There is so far no rigorous proof that solutions of the BD equations with  $\rho> \rho_{\rm crit}$ satisfy $ c_1(t)>z_s, \ t>T$, for sufficiently large $T$. A proof of it would seem to be necessary in order to rigorously justify the relationship between the BD and LSW systems. In this connection one should also note that in \cite{n} it is necessary to make an assumption -equation (1.33) of the paper-in order to prove that a scaling limit of the BD system is a solution of the LSW system.  Equation (1.33) is however not a boundary condition, but essentially an upper bound assumption on the rate of coarsening for the BD model. Some interesting heuristic explanations \cite{fn} have recently been given to describe the transition, from the relaxation to equilibrium phase of the BD time evolution, to the coarsening phase governed by the LSW equations. These may be helpful towards constructing a fully rigorous theory.

We shall assume now that there is a time $T$ such that for $t\ge T$, the interaction between  clusters of large volume and clusters of $O(1)$ volume is negligible, and even make the significantly stronger assumption that there exists $\ell^*\ge 1$, such that $c(\ell^*,t)=0, \ t \ge T$. We may therefore study the large time evolution of the cluster density for particles from the excess density by solving the initial value problem for (\ref{H1}), subject to conservation of mass for particles with volume larger than $\ell^*$. Without loss of generality we may normalize $\ell^*=1, \ T=0$, whence the problem becomes   solving (\ref{H1}) subject to  the Dirichlet boundary condition $c(1,t)=0, \  t\ge 0$, given initial condition $c_\ell(0)=\gamma_\ell, \ \ell \ge 1$, and conservation law
\be \label{I1}
\sum_{\ell=1}^\infty \ell c(\ell,t) =\rho- \rho_{\rm crit}.  
\ee
In $\S2$ we prove global existence and uniqueness of a solution to this problem (Theorem 2.2), and obtain some properties of it.

Next we consider a continuous version of (\ref{H1}). 
 Setting $x\sim \ell$ and replacing finite differences in (\ref{H1}) by derivatives, we obtain the equation 

\be \label{J1}
\frac{\pa c(x,t)}{\pa t}=\frac{\pa^2}{\pa x^2} \left[\ve(1+x/\ve)^{1/3}c(x,t)\right]+ \frac{\pa}{\pa x}\left[\left\{1-\left(\frac{x}{L(t)}\right)^{1/3}\right\}c(x,t)\right],  \quad x>0,
\ee
with the parameter $\ve=1$ in (\ref{J1}). In (\ref{J1}) we have replaced the coefficient $b_\ell=a_1(q+z_sl^{1/3})$ of (\ref{H1}) by the diffusion coefficient  $(1+x)^{1/3}$. This is simply for convenience, since the only properties of the diffusion coefficient in (\ref{J1}) which we will use are that it behaves like $x^{1/3}$ for large $x$, and is bounded away from 0 for small $x$.  An equation similar to (\ref{J1}) was obtained in \cite{v}.
In $\S 3$ we prove global existence and uniqueness of solutions to (\ref{J1}) subject to the constraint (\ref{F1}) and the Dirichlet boundary condition $c(0,t)=0, \  t>0$, provided the nonnegative initial data $c(x,0), \ x>0$, satisfies the integrability property,
\be \label{K1}
\int_0^\infty(1+ x) \  c(x,0) dx<\infty.
\ee
This is the content of Theorem 3.1.  In \cite{v} an analogous  {\it local} existence and uniqueness theorem is proved (Theorem 6 of \cite{v}), for which one must also assume the point-wise estimate $\sup_{x>0} c(x,0)/(1+x)^{2/3}<\infty$ on the initial data.

There are several existence and uniqueness theorems in the literature for the LSW system (\ref{E1}), (\ref{F1}) and its generalizations. Collet and Gordon \cite{cg} have proved existence and uniqueness  for a system with globally Lipschitz coefficients, under the condition that the initial data $c(\cdot,0)$ is integrable and has finite first moment. The corresponding result for the LSW system-in which the coefficients are not globally Lipschitz- was proved by Lauren\c{c}ot \cite{laur}. Niethammer and Pego \cite{np2} proved existence and uniqueness  for the LSW system under the condition that the initial data $c(\cdot,0)$ is a Borel measure with compact support. In \cite{np3} they were able to extend their method to the case when the initial data $c(\cdot,0)$ is only assumed to be a Borel measure with finite first moment.

In $\S4, \  \S5$ we shall show that the solution of the diffusive LSW system (\ref{F1}), (\ref{J1}) with given initial data, converges as $\ve\ra 0$ on any finite time interval to the solution of the classical LSW system with the same initial data. One hopes  that these results will shed some light on the problem of understanding the mechanism of coarsening for the  BD and other models of Ostwald ripening. It has been argued in the physics literature \cite{m} that in models  of Ostwald ripening a diffusive LSW equation occurs. Although the effect of the diffusion term vanishes at large time it acts as a selection principle, so that the large time behavior of the model is governed by the unique infinitely differentiable self-similar solution of the LSW model. 

The fact that the effect of the diffusion term is expected to vanish at large time is closely related to the dilation invariance property of  the LSW system. 
Thus if $c(x,t), \ L(t), \  t>0$, is a solution to (\ref{E1}), (\ref{F1}), then for any $\lambda>0$  one has that $c_\lambda(x,t), \ L_\lambda(t), \  t>0$, defined by $c_\lambda(x,t)=\lambda c(\lambda x,\lambda t), \ L_\lambda(t)=L(\lambda t)/\lambda, \  t>0$, is also a solution. It follows that one may solve the LSW system for large time iteratively by solving (\ref{E1}), (\ref{F1}) on the unit time interval $0\le t\le 1$ with $L(0)=1$, and then rescaling $L(1)$ to $1$.  Hence, assuming that in the iteration one always has for some constants $a, b$ satisfying $1<a\le b$ the inequality  $a\le L(1)\le b$,  the function  $L(t)$ increases {\it linearly} in $t$ as $t\ra \infty$. This is a {\it quantitative} statement concerning the phenomenon of coarsening. 

Consider now  how this iteration scheme would apply to the diffusive LSW system (\ref{F1}), (\ref{J1}). Denoting the solution to (\ref{J1}) by $c_\ve(x,t), \ L_\ve(t)$, then dilation using the parameter $\lambda$  results in a function $\lambda c_\ve(\lambda x,\lambda t)$ which is a solution to (\ref{F1}), (\ref{J1}) with $\ve$ in (\ref{J1}) replaced by $\ve/\lambda$. Thus in the iteration scheme described above $\ve\sim1/L(t) \ra 0$ for large time $t$, corresponding roughly to an equivalence of $\ve\sim 1, \ t\ra \infty$ and $\ve\ra 0, \ t\sim 1$. 

 We summarize our main results on convergence of  solutions of the diffusive LSW system to solutions of the classical LSW system.
In $\S4$ we prove that if  $c_\ve(x,t), \ L_\ve(t), \ t>0$, is the solution to (\ref{F1}), (\ref{J1}) with initial data satisfying (\ref{K1}) then
\begin{eqnarray} \label{L1}
\lim_{\ve \ra 0} \int^\infty_x  c_\ve(x',t) dx' &=& \ \ \ \int^\infty_x \; c_0(x',t)dx',  \quad x \ge 0, \\
\lim_{\ve \ra 0}  L_\ve(t) &=& L_0(t), \quad t\ge 0, \nn
\end{eqnarray}
where $c_0(x,t), \ L_0(t), \ t>0$, is the corresponding solution to the classical LSW system (\ref{E1}), (\ref{F1}). This is the content of Theorem 4.1. In $\S5$ we show that the point-wise in time coarsening rate for the diffusive LSW system converges as $\ve\ra 0$ to the corresponding rate for the LSW system. Specifically, let us define $\Lambda_\ve(t), \ t\ge 0$, by
\be \label{M1}
\Lambda_\ve(t)=\int_0^\infty xc_\ve(x,t)dx \Big/\int_0^\infty c_\ve(x,t) dx, \quad t\ge 0,
\ee
so $\Lambda_\ve(t)$ is the mean cluster volume at time $t\ge 0$. Then if the initial data satisfies (\ref{K1}), the function $ \Lambda_\ve(t), \ t>0$, is differentiable for $\ve\ge 0$ and
\be \label{N1}
\lim_{\ve\ra 0}  \frac{d\Lambda_\ve(t)}{dt}=  \frac{d\Lambda_0(t)}{dt}, \quad t\ge 0.
\ee
This is the content of Theorem 5.1.

In \cite{dp} an upper bound on the time averaged coarsening rate for the LSW system is obtained by application of a rather general argument of Kohn and Otto \cite{ko}. At the end of $\S3$ we extend the method of \cite{dp} to prove an upper bound on the time averaged coarsening rate for the diffusive LSW system (\ref{F1}), (\ref{J1}) with $0<\ve\le 1$, which is uniform as $\ve\ra 0$.  This is the content of Theorem 3.2. 
\section{Existence and Uniqueness for the Discrete Case}
We begin by giving a proof  of the existence and uniqueness of solutions to the Becker-D\"{o}ring system (\ref{A1}), (\ref{B1}) which is somewhat different than that given in \cite{bcp}.  We assume that the initial data $c_\ell(0),  \ \ell \ge 1$, satisfies
\be \label{A2}
c_\ell(0) = \ga_\ell, \ \ell \ge 1, \ \sum^\infty_{\ell = 1} \ell \ga_\ell = \rho > 0,
\ee
where the  $\ga_\ell, \ \ell \ge 1$, are all non-negative.  For any $\del > 0$ we define a space $X_\del$ by
\[	X_\del = \Big\{ c_1 :[0,\del] \ra \R : c_1(0) = \ga_1, \; c_1 \ {\rm \ is \ continuous \ and}  \ 0 \le c_1(t) \le \rho, \ 0 \le t \le \del \Big\}.	\]
Now $X_\del$ is a complete metric space under the uniform norm, $\| \cd \|_\del$.  We define a mapping $T : X_\del \ra X_\del$ which has the property that the solutions to the Becker-D\"{o}ring system give rise to fixed points of the mapping.  To do this we consider functions $v(\ell, t),  \ell \ge 1, \ t > 0$, and define $J(\ell, v, t)$ for $\ell \ge 1, \ t>0$, by
\be \label{B2}
J(\ell, v, t) = a_\ell c_1(t)  v(\ell,t) - b_{\ell + 1} \; v(\ell + 1, t),
\ee
where $c_1(t) , t>0$, is a given non-negative function.  Let $c(\ell, t), \ \ell \ge 2, \ t > 0$, be the solution to the linear system of equations,
\be \label{C2}
\frac{\pa c(\ell,t)}{\pa t} = J(\ell - 1, c, t) - J(\ell , c, t) , \ \ell \ge 2,
\ee
with initial and boundary conditions given by
\be \label{D2}
c(1,t) = c_1(t),  \ t > 0, \quad c(\ell, 0) = \ga_\ell, \ \ell \ge 2.
\ee
By the maximum principle there is a unique positive solution to (\ref{C2}), (\ref{D2}).  The mapping $T : X_\del \ra X_\del$ is defined as follows:  For $c_1 \in X_\del$ let $c(\ell, t), \ 0 \le t \le \del$, be the corresponding solution of (\ref{C2}), (\ref{D2}).
Then $Tc_1$ is given by the formula
\be \label{M2}
Tc_1(t) = \max \left\{ \rho - \sum^\infty_{\ell = 2} \ell c(\ell,t), 0 \right\}, \ \ 0 \le t \le \del.
\ee
Evidently $Tc_1 \in X_\del$.  We show that for $\del$ sufficiently small $T$ is a contraction.

\begin{lem}  There exists $\del(\rho) > 0$ depending only on $\rho$, such that for $0 < \del < \del(\rho)$ the mapping $T$ on $X_\del$ is a contraction.
\end{lem}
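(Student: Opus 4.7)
The plan is to produce a Lipschitz estimate
\[
\|Tc_1-T\tilde c_1\|_\delta \le C(\rho)\delta\,\|c_1-\tilde c_1\|_\delta,
\]
and then take $\delta(\rho)=1/(2C(\rho))$. Given $c_1,\tilde c_1\in X_\delta$, let $c(\ell,t),\tilde c(\ell,t)$ denote the corresponding solutions of (\ref{C2})--(\ref{D2}), and set $u=c-\tilde c$, $\Delta(t)=c_1(t)-\tilde c_1(t)$. Applying the elementary inequality $|\max(a,0)-\max(b,0)|\le|a-b|$ to (\ref{M2}) gives
\[
|Tc_1(t)-T\tilde c_1(t)| \le \Big|\sum_{\ell\ge 2}\ell\,u(\ell,t)\Big| =: |\mu(t)|.
\]
The crucial point is to work with the \emph{signed} mass $\mu$ rather than the unsigned sum $\sum_\ell\ell|u|$: because $a_\ell=a_1\ell^{1/3}$ grows with $\ell$, an unsigned estimate would force higher-than-first moments on the initial data, which are not available from $\rho<\infty$ alone.

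Differentiating $\mu(t)$ via (\ref{C2}) and telescoping yields
\[
\mu'(t) = 2\bigl[J(1,c,t)-J(1,\tilde c,t)\bigr] + \sum_{\ell\ge 2}\bigl[J(\ell,c,t)-J(\ell,\tilde c,t)\bigr].
\]
Each flux difference $J(\ell,c,t)-J(\ell,\tilde c,t) = a_\ell c_1(t)u(\ell,t)+a_\ell\Delta(t)\tilde c(\ell,t)-b_{\ell+1}u(\ell+1,t)$ decomposes into a forcing piece in $\Delta$ and a linear piece in $u$. A routine Gronwall estimate on (\ref{C2}), exploiting the sublinear growth of $a_\ell,b_\ell$, first provides the a priori bound $\sum_\ell\ell c(\ell,t),\sum_\ell\ell\tilde c(\ell,t)\le 2\rho$ for $t\le\delta$ with $\delta$ small depending on $\rho$. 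Together with $\sum_\ell a_\ell\tilde c(\ell,t)\le a_1\sum_\ell\ell\tilde c(\ell,t)\le 2a_1\rho$, this gives
\[
|\mu'(t)| \le C(\rho)|\Delta(t)| + C(\rho)\sum_{\ell\ge 2}\ell^{1/3}|u(\ell,t)|.
\]

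The main work is to bound $\sum_\ell\ell^{1/3}|u(\ell,t)|$. Rewrite the equation for $u$ on $\ell\ge 2$ as $\partial_t u=\bar L_{c_1}u+\tilde F$, where $\bar L_{c_1}$ is the BD generator equipped with Dirichlet boundary $v(1)=0$, and $\tilde F(\ell,s)=\Delta(s)[a_{\ell-1}\tilde c(\ell-1,s)-a_\ell\tilde c(\ell,s)]+a_1c_1(s)\Delta(s)\,\delta_{\ell,2}$ absorbs both the genuine forcing and the $\ell=2$ boundary contribution coming from $u(1,s)=\Delta(s)$. A direct computation of the adjoint acting on the test function $\ell^{1/3}$ uses the cancellation $(\ell\pm1)^{1/3}-\ell^{1/3}=O(\ell^{-2/3})$ to produce $\bar L_{c_1}^{*}[\ell^{1/3}](k)=O(k^{-1/3})$, so that for $v\ge 0$ the semigroup $P^{c_1}_t$ of $\bar L_{c_1}$ satisfies $\sum_\ell\ell^{1/3}P^{c_1}_tv(\ell)\le e^{C(\rho)t}\sum_\ell\ell^{1/3}v(\ell)$. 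Combining this with the positivity of $P^{c_1}_t$ (which gives $|u(t)|\le\int_0^tP^{c_1}_{t-s}[|\tilde F(s)|]\,ds$) and the bound $\sum_\ell\ell^{1/3}|\tilde F(\ell,s)|\le C(\rho)|\Delta(s)|$ (in which only the first moment of $\tilde c$ enters, since $\ell^{1/3}\cdot\ell^{1/3}=\ell^{2/3}\le\ell$) yields $\sum_\ell\ell^{1/3}|u(\ell,t)|\le C(\rho)\delta\|c_1-\tilde c_1\|_\delta$.

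Integrating $|\mu'(t)|$ over $[0,t]\subset[0,\delta]$ and using $\mu(0)=0$ then gives $|\mu(t)|\le C(\rho)\delta\|c_1-\tilde c_1\|_\delta$, so taking $\delta<1/C(\rho)$ yields the claimed contraction. The main obstacle is precisely that the unbounded growth $a_\ell\sim\ell^{1/3}$ breaks naive $\ell^1$-energy estimates; the resolution pairs two observations, namely that working with the signed quantity $\mu$ drops the required weight on $u$ from $\ell^{4/3}$ down to $\ell^{1/3}$, and that the weighted space $\ell^1(\ell^{1/3})$ supports a bounded-growth semigroup estimate on $\bar L_{c_1}$ thanks to the discrete-calculus identity $(\ell+1)^{1/3}-\ell^{1/3}=O(\ell^{-2/3})$.
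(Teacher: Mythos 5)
Your proposal is correct and follows essentially the same route as the paper: bound $|Tc_1-T\tilde c_1|$ by the signed first moment of the difference, telescope its time derivative into fluxes so that only $\ell^{1/3}$ weights appear, and control $\sum_\ell \ell^{1/3}|u(\ell,t)|$ via positivity (maximum principle) together with the discrete cancellation $(\ell+1)^{1/3}-\ell^{1/3}=O(\ell^{-2/3})$, Gronwall, and the a priori first-moment bound. The only differences are cosmetic: you work with $u=c-\tilde c$ and a Duhamel/positive-semigroup bound, where the paper uses the interpolation $u_\lambda=\partial c_\lambda/\partial\lambda$ and an explicit majorant $v_\lambda$ solving (\ref{G2}).
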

\begin{proof} We have from (\ref{C2}) that
\be \label{K2}
\sum^\infty_{\ell = 2} \ell c(\ell,t) = \sum^\infty_{\ell = 2} \ell\ga_\ell + \int^t_0 ds \ J(1,c,s) + \sum^\infty_{\ell = 1} J(\ell,c,s).
\ee
It follows that there is a constant $K > 0$, independent of $\rho, \del$, such that
\[   K \sum^\infty_{\ell = 2} \ell c(\ell,t) \le \sum^\infty_{\ell = 2} \ell\ga_\ell + \del[1+\rho] \|  \sum^\infty_{\ell = 1} 
\ell^{1/3}\; c(\ell, \cdot)\|_\del ,  \]
for $0 \le t \le \del$.  It follows from this that for $\del$ sufficiently small, depending only on $\rho$, there is the inequality,
\be \label{I2}
\|  \sum^\infty_{\ell = 1} \ell \; c(\ell, \cdot)\|_\del \le K_1\rho,
\ee
for some constant $K_1$ independent of $\rho$ and $\del$.

Next we consider $c_1, c'_1 \in X_\del$ and for $0 \le \la \le 1$, let $c_{\la, 1} = \la c_1 + (1-\la)c'_1$.  Define $c_\la(\ell,t), \ \ell \ge 2, \ 0 < t \le \del$, to be the solution of (\ref{C2}), (\ref{D2}) with $c_{\la, 1}$ substituted for $c_1$.  Putting $u_\la(\ell,t) = \pa c_\la(\ell,t)/\pa \la$ it follows that $u_\la$ satisfies the equation,
\be \label{E2}
\frac {\pa u_\la(\ell,t)}{\pa t} = J(\ell - 1, u_\la, t) - J(\ell, u_\la, t) + \big[ c_1(t) - c'_1(t)\big] \big[a_{\ell -1} c_\la(\ell - 1, t) - a_\ell c_\la(\ell, t) \big], \  \ell \ge 2, \ t>0,
\ee
with the initial and boundary conditions,
\be \label{F2}
u_\la(1,t) = c_1(t) - c'_1(t), \ t > 0,\ u_\la(\ell, 0) = 0, \ \ell \ge 2.
\ee
Let $v_\la(\ell, t),  \ell \ge 2, \ 0 \le t \le \del$, be the solution to the equation,
\be \label{G2}
\frac {\pa v_\la(\ell,t)}{\pa t} = J(\ell - 1, v_\la, t) - J(\ell, v_\la, t) + \| c_1 - c'_1\|_\del \big[a_{\ell -1} c_\la(\ell - 1, t) + a_\ell c_\la(\ell, t) \big], \ell \ge 2, 0 \le t \le \del,
\ee
with the initial and boundary conditions,
\be \label{H2}
v_\la(1,t) = \| c_1 - c'_1\|_\del, \ t > 0, \ \ v_\la(\ell,0) = 0,\ \ \ell \ge 2.
\ee
By the maximum principle $v_\la(\ell, t)$ is positive for  $\ell \ge 2 , t > 0$, and there is the inequality
\be \label{J2}
|u_\la(\ell, t)| \le v_\la(\ell, t), \ \ \ell \ge 2, \ \ 0 \le t \le \del.
\ee
From (\ref{G2}), (\ref{H2}) we have that
\begin{eqnarray*}
\sum^\infty_{\ell = 2} \ell^{1/3} \; v_\la(\ell, t) &=& \int^t_0 ds \Big\{ 2^{1/3} J(1, v_\la, s) + \sum^\infty_{\ell =2} \Big[(\ell +1)^{1/3} - \ell^{1/3}\Big] J(\ell, v_\la, s)\Big\} \\
&+& \| c_1 - c'_1\|_\del \int^t_0 ds \Big\{ 2^{1/3} a_1c_\la(1,s) + \sum^\infty_{\ell =2} \Big[(\ell +1)^{1/3} + \ell^{1/3}\Big] a_\ell c_\la (\ell, s)\Big\}.
\end{eqnarray*}
 Using (\ref{I2}) and applying Gronwall's inequality in the previous equation, we see 
that there is a constant $K_2$ independent of $\rho$ and $\del$ such that 
\be \label{L2}
\| \sum^\infty_{\ell = 2} \ell^{1/3} v_\la(\ell, \cd)\|_\del \le K_2 \del \rho\| c_1 - c'_1\|_\del,
\ee
provided $\del$ is taken sufficiently small, depending only on $\rho$.

To see that $T$ is a contraction for small $\del$ observe that 
\[  |Tc_1(t) - Tc'_1(t)| \le \Big| \sum^\infty_{\ell = 2} \ell \Big[ c_1(\ell, t) - c_0(\ell, t)\Big] \Big|  \ . \] 
Hence on using (\ref{K2}) we have that
\[	|Tc_1(t) - Tc'_1(t)|  \le K_3[1+\rho] \int^t_0 ds \Big\{ |c_1(s) - c'_1(s)| +  \sum^\infty_{\ell = 2} \ell^{1/3} | \int^1_0 u_\la(\ell, s)d\la | \Big\}, 0 \le t \le \del,  \]
for a constant $K_3$ independent of $\rho, \del$.  The result follows from the last inequality, (\ref{J2}), (\ref{L2}). 
\end{proof}
\begin{theorem} The Becker-D\"{o}ring system (\ref{A1}), (\ref{B1}) with initial condition satisfying (\ref{A2}) has a unique global in time solution.
\end{theorem}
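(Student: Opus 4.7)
The plan is to combine Lemma~2.1 with Banach's fixed point theorem to obtain a unique solution locally in time, verify that it really is a BD solution by checking mass conservation, and then iterate: since the local existence time $\del(\rho)$ produced by Lemma~2.1 depends only on the conserved quantity $\rho$, short time steps may be concatenated to yield a global solution.

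First I would apply Banach's theorem to the contraction $T:X_\del\ra X_\del$ from Lemma~2.1 to produce a unique fixed point $c_1\in X_\del$, and let $c(\ell,t)$ for $\ell\ge 2$ denote the (nonnegative) solution of the linear system (\ref{C2})--(\ref{D2}) driven by this $c_1$. By construction the BD equations (\ref{A1}) hold for $\ell\ge 2$, so only the $\ell=1$ equation and the conservation law (\ref{B1}) remain to be established.

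The crux is to show that $M(t):=\sum_{\ell\ge 2}\ell c(\ell,t)$ satisfies $M(t)\le\rho$ throughout $[0,\del]$. Given this, the max in (\ref{M2}) is always attained by its first argument, so the fixed-point identity reads $c_1(t)=\rho-M(t)$; differentiating and using the flux identity (\ref{K2}) then recovers the $\ell=1$ BD equation, while $c_1+M=\rho$ is (\ref{B1}). To prove $M\le\rho$, I would argue by contradiction: if $M(t^*)>\rho$ for some $t^*$, continuity together with $M(0)=\rho-\ga_1\le\rho$ produces $t_0<t^*$ with $M(t_0)=\rho$ and $M>\rho$ on $(t_0,t^*]$; throughout this interval $c_1\equiv 0$, so each flux reduces to $J(\ell,c,t)=-b_{\ell+1}c(\ell+1,t)\le 0$, and (\ref{K2}) then forces $dM/dt\le 0$, contradicting $M(t^*)>M(t_0)$.

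Uniqueness at the fixed-point level promotes to uniqueness for the BD system, because any BD solution $\{\tilde c_\ell\}$ with the prescribed data has $\tilde c_1\in X_\del$ by mass conservation and drives $\{\tilde c_\ell\}_{\ell\ge 2}$ to solve (\ref{C2})--(\ref{D2}), so $T\tilde c_1=\tilde c_1$ and hence $\tilde c_1=c_1$. For the global extension I would restart the argument at $t=\del$ with the new data $\{c_\ell(\del)\}$, which again satisfies $\sum\ell c_\ell(\del)=\rho$; iterating this step produces a unique solution on $[0,\infty)$. The main obstacle I anticipate is the bound $M\le\rho$, since without it one cannot rule out that the auxiliary dynamics (\ref{C2})--(\ref{D2}) drift into a regime in which $c_1$ is truncated by the max and total mass fails to be conserved; everything else is essentially bookkeeping.
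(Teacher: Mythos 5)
Your proposal is correct and follows the paper's architecture: Lemma 2.1 plus Banach's fixed point theorem, verification that the fixed point is a genuine BD solution, uniqueness by observing that any BD solution is itself a fixed point of $T$, and continuation in time using that $\del(\rho)$ depends only on the conserved mass. The one place where you genuinely diverge is the verification that the truncation in (\ref{M2}) never activates. The paper proves the stronger statement that $c_1(t)>0$ for $0<t<\del/2$: from (\ref{K2}) it derives the one-sided Gronwall-type inequality $c_1(t+\eta)-c_1(t)\ge -K\rho\int_t^{t+\eta}c_1(s)\,ds$, which propagates positivity forward, and it handles the case $\ga_1=0$ by a separate contradiction argument; positivity of $c_1=\max\{\rho-M,0\}$ then forces $c_1=\rho-M$, i.e.\ (\ref{B1}). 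You instead prove only what is needed, $M\le\rho$, by a first-crossing argument: past the first time $M$ reaches $\rho$ the fixed-point relation forces $c_1\equiv0$, so every flux reduces to $-b_{\ell+1}c(\ell+1,t)\le0$ and (\ref{K2}) makes $M$ nonincreasing, contradicting a putative overshoot. Both arguments rest on the same identity (\ref{K2}) and the nonnegativity of $c(\ell,t)$; yours is somewhat more economical (no Gronwall step, no case split on $\ga_1$), while the paper's yields the extra information that the monomer density stays strictly positive for $t>0$. Your remark about recovering the $\ell=1$ equation is harmless but not required, since the system as formulated consists of (\ref{A1}) for $\ell\ge2$ together with (\ref{B1}); and the global continuation step you spell out is left implicit in the paper's proof of this theorem (it is made explicit in the proof of Theorem 2.2) and is correct as you describe it.
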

\begin{proof}  It is evident that a solution of the Becker-D\"{o}ring system  (\ref{A1}), (\ref{B1}) with initial condition (\ref{A2}) gives rise to a fixed point of the mapping $T$ of (\ref{M2}).  Hence by Lemma 2.1 the solution is unique.  Correspondingly, suppose $c_1$ is a fixed point of $T$.  Similarly to (\ref{K2}) we have that 
\[    \sum^\infty_{\ell = 2} \ell \Big[ c(\ell, t) - c(\ell, t + \eta)\Big] = - \int^{t+\eta}_t ds \Big\{ J(1,c,s)  \]
\[	+ \sum^\infty_{\ell = 2} J(l,c,s) \Big\} \ge - K\; \rho \int^{t+\eta}_t c_1(s) ds,  \]
for $0 < \eta < \del/2, \ 0 < t < \del/2$, where $K > 0$ is a constant independent of $\del, \rho$.  Since $Tc_1 = c_1$ it follows from (\ref{M2}) that $c_1$ satisfies the inequality,
\[	c_1(t+\eta) - c_1(t) \ge -K \rho \int^{t+\eta}_t c_1(s)ds.	\]
Hence if $c_1(0) >0$ then $c_1(t) > 0$ for $0 < t < \del/2$.  Conversely if $c_1(0) = 0$ there is a $t_0 \ge 0$ such that $c_1(t) = 0$ for $t \le t_0$ and $c_1(t) > 0$ for $t_0 < t < \del/2$.  Suppose $t_0 > 0$.  Then (\ref{K2}) implies that
\[	\sum^\infty_{\ell = 2} \ell c(\ell, t) < \sum^\infty_{\ell = 2} \ell \ga_\ell, \ \ 0 < t \le t_0.  \]

Since $Tc_1 = c_1$ we conclude from (\ref{M2}) that $c_1(t) > 0$ for  $0 < t < t_0$, a contradiction. Thus $t_0=0$ whence $c_1(t) > 0$ for $0 < t < \del/2$ in all situations. It follows then from (\ref{M2}) that the conservation law (\ref{B1}) is satisfied.  Thus $c_1(t), c(\ell, t), \ell \ge 2$, are a solution of the Becker-D\"{o}ring system (\ref{A1}), (\ref{B1}) . 
\end{proof}
Next we establish existence and uniqueness for (\ref{H1}), (\ref{I1}) subject to the Dirichlet boundary condition $c(1,t)=0, \ t\ge 0$.   For a given non negative function $c_1(t), \ t \ge 0$, let $c(\ell, t), \ \ell \ge 2, \  t > 0,$ be the solution to (\ref{C2}) subject to the initial and boundary conditions,
\be \label{N2}
c(1,t) =0, \ t \ge 0, \quad  \ c(\ell,0) = \ga_\ell, \ \ell \ge 2,
\ee
where the $\ga_\ell$ satisfy (\ref{A2}) with $\ga_0 = 0, \ \rho = 1$.  Normalizing $\rho-\rho_{\rm crit}=1$ in (\ref{I1}), the function $c_1(t), t \ge 0$, is then to be chosen to maintain the conservation law
\be \label{O2}
 \sum^\infty_{\ell = 2} \ell c(\ell, t) = 1 .
\ee
Assuming we have a solution to (\ref{C2}), (\ref{N2}), (\ref{O2}) it follows that
\begin{eqnarray*}
0 &=& \frac \pa{\pa t} \sum^\infty_{\ell = 2} \ell c(\ell, t) = J(1,c,t) + \sum^\infty_{\ell = 1} J(\ell,c,t) \\
&=& c_1(t) \sum^\infty_{\ell = 2} a_\ell c(\ell, t) - \sum^\infty_{\ell = 2} b_\ell c(\ell, t) - b_2 c(2,t).
\end{eqnarray*}
We conclude therefore that $c_1(t)$ is given by the formula,  
\be \label{P2}
c_1(t) = z_s + \left\{ a_1q  \sum^\infty_{\ell = 2}  c(\ell, t) + b_2\; c(2,t) \right\} \; \Big/ \;  \sum^\infty_{\ell = 2} a_\ell c(\ell, t). 
\ee
Setting $t=0$ in (\ref{P2}) yields then the identity,
\be \label{Q2}
c_1(0) = z_s + \left\{ a_1q  \sum^\infty_{\ell = 2}  \ga_\ell + b_2\;\ga_2  \right\} \; \Big/ \;  \sum^\infty_{\ell = 2} a_\ell \ga_\ell \ .
\ee
Observe from (\ref{P2}) that $c_1$ satisfies the inequality,
\[	z_s \le c_1(t) \le 2(z_s + q/2^{1/3}), \ \ t > 0.	\]
Hence for any $\del > 0$ we define a space $X_\del$ by
\[
X_\del = \Big\{ c_1 : [0,\del] \ra \R : c_1(0) \ \ {\rm is \ given \ by} \ (\ref{Q2}), \ \  c_1 \ {\rm is \ continuous\ and} \]
\[	z_s \le c_1(t) \le 2(z_s + q/2^{1/3}), \ 0 \le t \le \del \Big\}.	\]
As previously we equip $X_\del$ with the uniform norm $\|\cd\|_\del$ and define a mapping $T$ on $X_\del$ by
\be \label{R2}
Tc_1(t) = z_s + \left\{ a_1q \sum^\infty_{\ell = 2} c(\ell,t) + b_2c(2,t)\right\} \Big/ \sum^\infty_{\ell = 2} a_\ell c(\ell,t),  \ 0 \le t \le \del, 
\ee 
where $c(\ell,t),  \ t > 0$, is the solution to (\ref{C2}) subject to the initial and boundary conditions (\ref{N2}).
\begin{lem}  There exists $\del_0 > 0$ such that for $0 < \del < \del_0$ the mapping $T$ on $X_\del$ is a contraction.
\end{lem}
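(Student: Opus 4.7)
The overall strategy mirrors that of Lemma 2.1. Given $c_1, c_1' \in X_\del$, I would interpolate by setting $c_{\la,1} = \la c_1 + (1-\la) c_1'$, let $c_\la(\ell,t)$ solve (\ref{C2}) with $c_{\la,1}$ in place of $c_1$ and subject to (\ref{N2}), and put $u_\la = \pa c_\la/\pa \la$. Then $u_\la$ satisfies an inhomogeneous equation of the form (\ref{E2}), but now with the \emph{homogeneous} boundary and initial data $u_\la(1,t) = 0$, $u_\la(\ell, 0) = 0$, because $c_\la(1,t)$ is held fixed at zero by (\ref{N2}) rather than being the free variable $c_{\la,1}$. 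The maximum principle then gives $|u_\la(\ell,t)| \le v_\la(\ell,t)$, where $v_\la$ solves the majorant equation (\ref{G2}) with the homogeneous data $v_\la(1,t) = 0$, $v_\la(\ell,0) = 0$.

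To estimate $Tc_1(t) - Tc_1'(t)$ I would apply the quotient rule to (\ref{R2}), writing
\[
Tc_1(t) - Tc_1'(t) = \int_0^1 \frac{d}{d\la}\big[Tc_{\la,1}(t)\big] \, d\la,
\]
with
\[
\frac{d}{d\la}\big[Tc_{\la,1}(t)\big] = \frac{N'(\la,t)}{D(\la,t)} - \frac{N(\la,t)\, D'(\la,t)}{D(\la,t)^2},
\]
where $N(\la,t) = a_1q \sum_{\ell\ge 2} c_\la(\ell,t) + b_2 c_\la(2,t)$ and $D(\la,t) = \sum_{\ell\ge 2} a_\ell c_\la(\ell,t)$, and $N',\, D'$ have the same shape with $u_\la$ in place of $c_\la$. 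Since $a_\ell = a_1 \ell^{1/3}$, the quantity to dominate in $|N'|$ and $|D'|$ is $\sum_{\ell\ge 2} \ell^{1/3} v_\la(\ell,t)$, so the whole argument reduces to (i) a uniform positive lower bound on $D(\la,t)$, and (ii) a smallness bound on $\sum \ell^{1/3} v_\la(\ell,t)$.

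Two preliminary a priori estimates on $c_\la$ are needed. First, the analog of (\ref{I2}) giving $\|\sum_{\ell\ge 2} \ell\, c_\la(\ell,\cd)\|_\del \le K$ uniformly in $\la$ follows by the same Gronwall argument as in Lemma 2.1, using that $c_{\la,1}(t) \le 2(z_s + q/2^{1/3})$ is built into $X_\del$. Second, continuity of $t \mapsto D(\la,t)$ uniformly in $\la$ follows from the previous bound via uniform integrability, since $\sum_{\ell > L} \ell^{1/3} c_\la(\ell,t) \le L^{-2/3} \sum_{\ell\ge 2} \ell\, c_\la(\ell,t)$. Combined with the positivity $D(\la,0) = a_1\sum_{\ell\ge 2} \ell^{1/3} \ga_\ell > 0$ that is implicitly assumed in order for (\ref{Q2}) to make sense, this gives a uniform lower bound $D(\la,t) \ge D_0/2 > 0$ on all of $[0,\del]$ once $\del$ is small. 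With that in hand, repeating the energy computation of Lemma 2.1 for the majorant equation (\ref{G2}) (the boundary contribution at $\ell=1$ is now $0$ rather than $\|c_1 - c_1'\|_\del$, which only helps) produces the analog of (\ref{L2}),
\[
\Big\|\sum_{\ell\ge 2} \ell^{1/3} v_\la(\ell,\cd)\Big\|_\del \le K_2 \del\, \|c_1 - c_1'\|_\del.
\]
Assembling the pieces then yields $|Tc_1(t) - Tc_1'(t)| \le K_3 \del\, \|c_1 - c_1'\|_\del / D_0^2$, so $T$ is a contraction whenever $\del < D_0^2/(2K_3)$.

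The main obstacle I anticipate is the uniform positive lower bound on $D(\la,t)$. In Lemma 2.1 one never needs to divide by a solution-dependent quantity, so positivity issues do not arise; here (\ref{R2}) is intrinsically a ratio, and without the lower bound $T$ is not even well-defined on $X_\del$. Establishing it cleanly requires joint continuity of the series $D(\la,t)$ in $(\la,t)$, which in turn rests on the uniform integrability supplied by the first a priori estimate; once this is in place the rest of the argument is a routine adaptation of Lemma 2.1.
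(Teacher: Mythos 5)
Your proposal is correct and follows the same overall skeleton as the paper's proof (interpolation in $\la$, the majorant $v_\la$ with $|u_\la|\le v_\la$, the quotient rule applied to (\ref{R2}), and a lower bound on the denominator $\sum_{\ell\ge2}a_\ell c_\la(\ell,t)$), but it closes the two key estimates by a somewhat different route. The paper only bounds the unweighted sum $\sum_\ell v_\la(\ell,t)$ (inequality (\ref{U2})) and does \emph{not} dominate $\sum_\ell a_\ell u_\la(\ell,t)$ by $\sum_\ell \ell^{1/3}v_\la(\ell,t)$: instead it writes the evolution identity for $\sum_\ell a_\ell u_\la$ and exploits the decay $a_{\ell+1}-a_\ell=O(\ell^{-2/3})$, so that only $\|\sum_\ell v_\la\|_\del$ and $\|\sum_\ell a_\ell c_\la\|_\del$ enter; your route through a weighted bound $\|\sum_\ell \ell^{1/3}v_\la\|_\del\le K\del\|c_1-c_1'\|_\del$, obtained exactly as (\ref{L2}) in Lemma 2.1 from the (\ref{I2})-type mass bound, works just as well. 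The more substantive difference is the denominator: you obtain $D(\la,t)\ge D_0/2$ from continuity plus positivity at $t=0$, so your $\del_0$ (and contraction constant of order $\del/D_0^2$) depends on the initial data through $D_0=\sum_\ell a_\ell\ga_\ell$, whereas the paper proves the multiplicative two-sided bound $g(0)/2\le g(t)\le 3g(0)/2$ from $|g(t)-g(0)|\le K\int_0^t g(s)\,ds$ with a universal $K$, so that the data-dependent magnitude $g(0)$ cancels against $\|\sum_\ell a_\ell c_\la\|_\del$ in (\ref{V2}) and (\ref{W2}) and $\del_0$ is independent of $\ga_\ell$. That uniformity is what allows Theorem 2.3 to restart the fixed-point argument at $t=\del$ without the local existence time shrinking; with your data-dependent $\del_0$ the lemma as stated is still proved, but the continuation to a global solution would require an additional lower bound on $\sum_\ell a_\ell c(\ell,t)$ along the solution.
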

\begin{proof} We proceed as in Lemma 2.1.  We consider $c_1, \; c'_1 \in X_\del$ and for $0 \le \la \le 1$ we put $c_{\la,1} = \la c_1 + (1-\la)c'_1$.  Define $c_\la(\ell, t), \ \ell \ge 2, \ 0 \le t \le \del$, to be the solution of (\ref{C2}), (\ref{N2}) with $c_{\la,1}$ substituted for $c_1$.  Putting $u_\la(\ell,t) = \pa c_\la(\ell, t) \; / \; \pa \la$ we see from (\ref{R2}) that
\be \label{S2}
Tc_1(t) - Tc'_1(t) = \int^1_0 \ d\la \bigg[ \left\{ \sum^\infty_{\ell = 2} a_\ell c_\la(\ell, t) \right\} \ 
 \left\{ a_1q \sum^\infty_{\ell = 2} u_\la(\ell,t) + b_2 u_\la(2,t) \right\} 
\ee
\[- \left\{ a_1q \sum^\infty_{\ell = 2} c_\la(\ell,t) + b_2 c_\la (2,t) \right\} \ 
\left\{ \sum^\infty_{\ell = 2} a_\ell u_\la (\ell,t)\right\} \bigg] \ \Big/ \left[ \sum^\infty_{\ell = 2} a_\ell c_\la (\ell,t)\right]^2.\]
Evidently $u_\la(\ell,t), \ \ell \ge 2$, satisfies (\ref{E2}) with initial and boundary conditions given by
\be \label{T2}
u_\la(1,t) = 0, \ t \ge 0, \ u_\la(\ell,0) = 0, \ \ell \ge 2.
\ee
Letting $v_\la(\ell,t)$ be the solution to (\ref{G2}) with zero initial and boundary conditions corresponding to (\ref{T2}) it follows that (\ref{J2}) holds.

We wish to bound the RHS of (\ref{S2}) in terms of $\| c_1 - c'_1\|_\del$.  To do this we first observe that from (\ref{G2}) we have the identity,
\[  \sum^\infty_{\ell = 2}  v_\la (\ell,t) = -b_2 \int^t_0 ds \ v_\la(2,s) + 2 \| c_1 - c'_1\|_\del \int^t_0 \sum^\infty_{\ell = 2} a_\ell c_\la(\ell,s),
\]
whence we conclude that
\be \label{U2}
\| \sum^\infty_{\ell = 2} v_\la(\ell, \cd)\|_\del \le 2\del \| c_1 - c'_1\|_\del \| \sum^\infty_{\ell = 2} a_\ell c_\la(\ell,\cd)\|_\del \  .
\ee
 
Now from (\ref{C2}) we have that
$$
\sum^\infty_{\ell = 2} a_\ell c_\la (\ell,t) = \sum^\infty_{\ell = 2} a_\ell \ga_\ell + \int^t_0 ds \Big\{ a_2J(1,c,s) 
+ \sum^\infty_{\ell = 2} \Big[ a_{\ell + 1} - a_\ell \Big] J(\ell, c, s) \Big\} .
$$
Defining now $g(t), \ t \ge 0$, by
\[	g(t) = \sum^\infty_{\ell = 2} a_\ell c_\la (\ell,t),  \]
it follows that there is a constant $K > 0$ such that
\[    |g(t) - g(0)| \le K \ \int^t_0 \ g(s)ds, \ \ t \ge 0.	\]
Hence $h(t) = |g(t) - g(0)|$ satisfies the inequality,
\[   h(t) \le Kt \ g(0) + K\ \int^t_0 \ h(s)ds. \]
We conclude that $h$ satisfies the inequality,
\[	h(t) \le \left[ e^{Kt}-1\right] g(0), \ \ t \ge 0. 	\]
We may therefore choose $\del_0$ small enough so that $g(0)/2 \le g(t) \le 3g(0)/2$, $0 \le t \le \del_0$.  Now from (\ref{J2}), (\ref{U2}) we see that 
\be \label{V2}
\bigg| \left\{ a_1q \sum^\infty_{\ell = 2} u_\la (\ell,t) + b_2 u_\la(2,t) \right\} \Big/ \sum^\infty_{\ell = 2} a_\ell c_\la (\ell,t)\bigg| \le K_1\del \| c_1 - c'_1\|_\del, \ 0 \le t \le \del, 
\ee
for some constant $K_1$ independent of $\del_0$, provided $0 < \del < \del_0$. 

Next we observe from (\ref{E2}) that
\begin{eqnarray*}
\sum^\infty_{\ell = 2} a_\ell u_\la (\ell,t) &=& \int^t_0 ds \Big\{ a_2 J(1, u_\la, s) + \sum^\infty_{\ell = 2} \Big[ a_{\ell + 1} - a_\ell \Big] J(\ell, u_\la, s) \Big\}  \\
&+& \int^t_0 ds \Big[c_1(s) - c'_1(s)\Big] \sum^\infty_{\ell = 2} \Big(a_{\ell+1} - a_\ell\Big) a_\ell c_\la (\ell,s) .
\end{eqnarray*}
It follows then from (\ref{J2}) that there are constants $K_2, K_3 > 0$ independent of $\del_0$ such that
$$
\Big| \sum^\infty_{\ell = 2} a_\ell u_\la (\ell,t)\Big| \le K_1\del \Big\| \sum^\infty_{\ell = 2}  v_\la (\ell,\cd)\Big\|_\del 
+ K_2\del \| c_1 - c'_1\|_\del \; \Big\|\sum^\infty_{\ell = 2} a_\ell c_\la (\ell,\cdot) \Big\|_\del, \ 0 \le t \le \del.
$$
Hence, on using (\ref{U2}), we see that there is a constant $K_4$ such that
\be \label{W2}
\Big| \sum^\infty_{\ell = 2} a_\ell u_\la (\ell,t) \Big| \ \Big/ \ \sum^\infty_{\ell = 2} a_\ell c_\la (\ell,t) \le 
K_4\del \| c_1 - c'_1\|_\del,  \ 0 \le t \le \del.
\ee
It is clear now from (\ref{S2}), (\ref{V2}), (\ref{W2}) that on taking $\del_0$ sufficiently small the mapping $T$ is a contraction for $0 < \del \le \del_0$. 
\end{proof}
\begin{theorem} The system (\ref{C2}), (\ref{N2}), (\ref{O2}) has a unique solution global in time.
\end{theorem}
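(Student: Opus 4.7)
The plan is to follow the template already established by Theorem 2.1, namely to combine the contraction map of Lemma 2.2 with Banach's fixed point theorem to obtain local existence, verify that the fixed point delivers a solution of the full system (including the conservation law), and then iterate in time to get a global solution. Uniqueness will fall out of the same contraction property.

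First I would apply Lemma 2.2 on the interval $[0,\delta_0]$: there is a unique fixed point $c_1 \in X_{\delta_0}$ of the map $T$ defined in (\ref{R2}), and the associated $c(\ell,t)$ solving (\ref{C2}) with the Dirichlet and initial conditions (\ref{N2}) is non-negative by the maximum principle. The key observation is that the definition (\ref{R2}) of $T$ was derived precisely by demanding that $\frac{d}{dt}\sum_{\ell\ge 2}\ell c(\ell,t)=0$. Explicitly, the computation preceding (\ref{P2}) shows that for any $c_1$ with $Tc_1=c_1$,
\[
J(1,c,t)+\sum_{\ell=1}^\infty J(\ell,c,t) \;=\; -b_2 c(2,t)+(c_1(t)-z_s)\sum_{\ell\ge 2}a_\ell c(\ell,t) - a_1 q\sum_{\ell\ge 2} c(\ell,t) \;=\; 0,
\]
which, combined with the summation-by-parts identity expressing the time derivative of $\sum \ell c(\ell,t)$ as $J(1,c,t)+\sum_{\ell\ge 1} J(\ell,c,t)$, forces the conservation law (\ref{O2}) to hold on $[0,\delta_0]$ once it holds at $t=0$, as guaranteed by (\ref{A2}) with $\rho=1$. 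Thus the fixed point yields a genuine local solution of the full system (\ref{C2}), (\ref{N2}), (\ref{O2}).

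For global extension I would iterate. The critical point is that the length $\delta_0$ furnished by Lemma 2.2 does not depend on the initial data, only on the structural constants $z_s,q,a_1$: reexamining that proof, the bound $g(t)/g(0)\in[1/2,3/2]$ for $g(t)=\sum_{\ell\ge 2}a_\ell c_\lambda(\ell,t)$ required $e^{K\delta_0}-1\le 1/2$ for a constant $K$ independent of $g(0)$, and the other estimates (\ref{V2}), (\ref{W2}) feed through purely in terms of $\delta$. Hence, starting afresh at $t=\delta_0$ with the new data $\tilde\gamma_\ell = c(\ell,\delta_0)$ (which are non-negative, satisfy $\sum \ell \tilde\gamma_\ell = 1$ by the previous step, and have $\sum a_\ell \tilde\gamma_\ell>0$ by the strong maximum principle), Lemma 2.2 again delivers a unique fixed point on $[\delta_0,2\delta_0]$, and the continuity of $c_1$ together with the matching boundary value forced by (\ref{Q2}) (applied to $\tilde\gamma$) guarantees that the two pieces glue into a solution on $[0,2\delta_0]$. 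Iterating indefinitely produces a global solution, and uniqueness is immediate since on every window $[k\delta_0,(k+1)\delta_0]$ the fixed point is unique.

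The step I expect to require the most care is the glueing, specifically verifying that at each restart the value $c_1(k\delta_0)$ coming from the previous piece coincides with the value (\ref{Q2}) demanded by membership of the next piece in $X_{\delta_0}$, and that the denominator $\sum_{\ell\ge 2}a_\ell c(\ell,k\delta_0)$ remains bounded away from zero uniformly in $k$ so that $c_1$ stays inside the range $[z_s,2(z_s+q/2^{1/3})]$ that defines $X_{\delta_0}$. The first matching is automatic from (\ref{P2}). For the second, the upper bound on $c_1$ uses that the conservation law (\ref{O2}) controls $\sum_{\ell\ge 2} c(\ell,t)$ and $c(2,t)$ from above while $\sum_{\ell\ge 2}a_\ell c(\ell,t) \le a_1 \sum_{\ell\ge 2}\ell^{1/3}c(\ell,t) \le a_1$; the lower bound on the same denominator (needed so the formula (\ref{P2}) makes sense and gives $c_1 \le 2(z_s+q/2^{1/3})$) will use positivity from the maximum principle together with the conservation law preventing all mass from escaping to $\ell=\infty$ in finite time, which in turn can be established by a standard tightness argument using the linear moment bound.
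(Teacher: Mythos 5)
Your proposal is correct and follows essentially the same route as the paper, whose own proof is just the observation that Lemma 2.2 gives the solution on $[0,\del]$ and that the only constraints needed for a restart --- the conservation law (\ref{O2}) and the value of $c_1$ given by (\ref{Q2}) applied to the new data --- automatically hold at $t=\del$, so the solution propagates. You merely spell out details the paper leaves implicit (that the fixed point of (\ref{R2}) forces $\frac{d}{dt}\sum_\ell \ell c(\ell,t)=0$, and that $\del_0$ is data-independent); your worry about a lower bound on $\sum_{\ell\ge2}a_\ell c(\ell,k\del_0)$ uniform in $k$ is not actually needed, since strict positivity at each restart suffices once $\del_0$ is structural.
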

\begin{proof} From Lemma 2.2 the solution exists for time $0 \le t < \del$.  Since the only constraint on the initial condition is (\ref{O2}) at $t=0$ and on $c_1(t)$ equation (\ref{Q2}) at $t=0$ we may propagate the solution beyond time $\del$ since these conditions also hold at $t=\del$. 
\end{proof}
It is evident from (\ref{P2}) that for a solution to (\ref{C2}), (\ref{N2}), (\ref{O2}) the corresponding function $c_1(t)$ satisfies $c_1(t) > z_s, \ 0 \le t < \infty$.  We shall show that the  $\liminf$ of $c_1(t)$ as $t \ra \infty$ is $z_s$.  

\begin{lem}  Let $c(\ell,t), \ \ell \ge 2$, be a solution to (\ref{C2}), (\ref{N2}), (\ref{O2}), and define $g(t)$ by
\[	g(t) = \sum^\infty_{\ell = 2} c(\ell,t).  \]
Then $g$ is monotonic decreasing and $\lim_{t \ra \infty} g(t) = 0$.
\end{lem}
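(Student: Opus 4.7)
The plan is to establish monotonicity by direct calculation, and then prove $\lim_{t \ra \infty} g(t) = 0$ by induction on $\ell$, showing that $c(\ell, \cd) \in L^1(0, \infty)$ and $c(\ell, t) \ra 0$ for every $\ell \ge 2$, combined with the mass conservation (\ref{O2}). Summing $\pa c(\ell, t)/\pa t = J(\ell - 1, c, t) - J(\ell, c, t)$ over $\ell \ge 2$, the sum telescopes: since $c(\ell, t) \le 1/\ell$ by nonnegativity and (\ref{O2}), while $a_\ell$ and $b_{\ell+1}$ grow only like $\ell^{1/3}$, one has $J(\ell, c, t) \ra 0$ as $\ell \ra \infty$. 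Together with $c(1, t) = 0$ this gives $g'(t) = J(1, c, t) = -b_2 c(2, t) \le 0$, so $g(t) \ra g_\infty$ for some $g_\infty \ge 0$ and $\int_0^\infty c(2, s)\, ds \le g(0)/b_2 < \infty$. Using the a priori bounds $c(\ell, t) \le 1/\ell$ and $c_1(t) \le 2(z_s + q/2^{1/3})$, the derivative $|\pa c(2, t)/\pa t|$ is uniformly bounded, so the $L^1$ integrability forces $c(2, t) \ra 0$.

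For the inductive step from $\ell$ to $\ell + 1$, summing the evolution equation over $k \ge \ell + 1$ telescopes to
\[ \frac{d}{dt} \sum_{k \ge \ell + 1} c(k, t) = J(\ell, c, t) = a_\ell c_1(t) c(\ell, t) - b_{\ell+1} c(\ell+1, t). \]
Integrating on $[0, T]$ and using $\sum_{k \ge \ell+1} c(k, T) \ge 0$,
\[ b_{\ell+1} \int_0^T c(\ell+1, s)\, ds \le 2 a_\ell (z_s + q/2^{1/3}) \int_0^T c(\ell, s)\, ds + g(0), \]
which is uniformly bounded in $T$ by the inductive hypothesis. Hence $c(\ell+1, \cd) \in L^1$, and the same derivative-bound argument as for $\ell = 2$ gives $c(\ell+1, t) \ra 0$. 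Finally, for $\ve > 0$ choose $N$ with $1/N < \ve/2$: by (\ref{O2}) one has $\sum_{k > N} c(k, t) \le 1/N$, while for $t$ sufficiently large each $c(\ell, t) < \ve/(2N)$ for $\ell = 2, \ldots, N$, so $g(t) < \ve$.

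The main difficulty is choosing the right inductive quantity: the tail sum $\sum_{k > \ell} c(k, t)$ evolves under a single flux $J(\ell, c, t)$, and positivity of $b_{\ell+1}$ together with boundedness of $c_1$ convert $L^1$ control of $c(\ell, \cd)$ into $L^1$ control of $c(\ell+1, \cd)$ at each step; the remaining estimates are routine.
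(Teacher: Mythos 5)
Your proof is correct, and its second half takes a genuinely different route from the paper's. The monotonicity computation $g'(t)=-b_2c(2,t)$ and the inductive integrability estimate $\int_0^\infty c(\ell,s)\,ds<\infty$ for every $\ell\ge 2$ coincide with the paper's steps (the latter is exactly (\ref{Y2})). From there the paper argues by contradiction: if $g(t)\ge\gamma>0$ for all large $t$, then Jensen's inequality applied to (\ref{P2}) gives the supersaturation bound $c_1(t)\ge z_s+q\,g(t)^{1/3}\ge z_s+q\gamma^{1/3}$, and inserting this into the first-moment identity (\ref{K2}), with the finitely many low-$\ell$ terms controlled by (\ref{Y2}), forces $\sum_\ell \ell c(\ell,t)\to\infty$, contradicting (\ref{O2}). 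You instead upgrade the $L^1$ bounds to pointwise decay $c(\ell,t)\to 0$ for each fixed $\ell$ (nonnegative, integrable, and Lipschitz in $t$, since $c_1(t)\le 2(z_s+q/2^{1/3})$ and $c(k,t)\le 1/k$), and then use (\ref{O2}) only for the uniform tail bound $\sum_{k>N}c(k,t)\le 1/N$; this avoids Jensen and any lower bound on $c_1$ entirely, is more elementary, and yields the slightly stronger intermediate conclusion that every individual cluster density vanishes as $t\to\infty$, whereas the paper's route isolates the physically meaningful mechanism (persistent supersaturation would make the conserved first moment grow without bound). Both arguments rest on the same tacit justification of term-by-term differentiation of the tail sums, which you in fact sketch more explicitly (via $J(\ell,c,t)=O(\ell^{-2/3})$) than the paper does.
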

\begin{proof} From (\ref{C2}) we see that $g'(t) = -b_2 c(2,t) < 0$ whence $g$ is strictly monotone decreasing.  From (\ref{P2}) there is the inequality.
\[	c_1(t) \ge z_s + q \; \Big/ \; \left< X^{1/3} \right>_t \ ,	\]
where $\left< \cd \right>_t$ is the probability measure on the integers $\ell \ge 2$ with weights $c(\ell, t)$.  Using the  Jensen inequality we have that 
\[	\left< X^{1/3} \right>_t  \le \left< X\right>^{1/3}_t  = 1/g(t)^{1/3},  \]
whence we conclude that
\[c_1(t) \ge z_s + qg(t)^{1/3}.   \]
Suppose now that $g(t)$ does not converge to 0 as $t \ra \infty$.  Then there exists $\ga, T > 0$ such that $g(t) \ge \ga>0$ for $t \ge T$.  We shall show that this implies
\be \label{X2}
\lim_{t \ra \infty} \sum^\infty_{\ell = 2} c(\ell,t) = \infty,
\ee
contradicting (\ref{O2}).

To prove (\ref{X2}) we first observe that for any $\ell \ge 2$ one has that
\be \label{Y2}
\int^\infty_0 \ c(\ell, s) ds \ < \ \infty, \ \ \ \ell \ge 2.
\ee
For $\ell = 2$ the inequality (\ref{Y2}) evidently follows from the relation $g'(t) = -b_2\;c(2,t)$ and the fact that $g$ is always positive.  For $\ell = 3$ it follows from (\ref{Y2}) for $\ell = 2$ and the fact that $g(t) - c(2,t)$ is bounded for all $t$, and we can proceed by induction to establish it for all $\ell \ge 2$.   Now using (\ref{K2}) we have that for $t > T$,
\begin{eqnarray*}
\sum^\infty_{\ell = 2} \ell c(\ell,t) &=& 1 + \int^t_T ds \Big\{ - b_2 c(2,s) + \sum^\infty_{\ell = 2} \big[a_\ell c_1(s) - b_\ell\big] c(\ell,s) \Big\} \\
&\ge& K(t) + \frac 1 2 \ \int^t_T \ ds \ q\ga^{1/3}\ g(s),
\end{eqnarray*}
where by (\ref{Y2}) $K(t)$ is bounded below uniformly in $t$ as $t \ra \infty$.  Hence (\ref{X2}) holds. 
\end{proof}
\begin{lem} There is the limit, $\displaystyle{\liminf_{t\ra\infty}} \; c_1(t) = z_s$.
\end{lem}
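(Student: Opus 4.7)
Since (\ref{P2}) gives $c_1(t)>z_s$ for every $t$, one has $\liminf_{t\to\infty} c_1(t)\ge z_s$, and it remains to prove $\liminf_{t\to\infty} c_1(t)\le z_s$. The plan is a contradiction argument: assume $c_1(t)\ge z_s+\eta$ for all $t\ge T$ and some $\eta>0$. Writing $G(t)=\sum_{\ell\ge 2}\ell^{1/3}c(\ell,t)$, the identity (\ref{P2}) rearranges to $(c_1(t)-z_s)G(t)=qg(t)+(b_2/a_1)c(2,t)$, so the hypothesis yields
\[
G(t) \le \frac{q+b_2/a_1}{\eta}\bigl(g(t)+c(2,t)\bigr)\qquad (t\ge T).
\]

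The key step is to play this off against the sublinear moment $F(t)=\sum_{\ell\ge 2}\ell^{2/3}c(\ell,t)$. Writing $\ell^{2/3}c=c^{1/3}(\ell c)^{2/3}$ and applying H\"older with exponents $3,3/2$ against (\ref{O2}) and $g(t)=\sum c(\ell,t)$ gives $F(t)\le g(t)^{1/3}$, so Lemma 2.3 forces $F(t)\to 0$. On the other hand, computing $F'(t)$ from (\ref{C2}) by summation by parts and decomposing the flux as
\[
J(\ell,c,t)=a_1\ell^{1/3}(c_1-z_s)c(\ell,t)+a_1z_s\bigl[\ell^{1/3}c(\ell,t)-(\ell+1)^{1/3}c(\ell+1,t)\bigr]-a_1q\,c(\ell+1,t),
\]
then using the asymptotic $(\ell+1)^{2/3}-\ell^{2/3}\sim(2/3)\ell^{-1/3}$ and a second Abel summation on the telescoping $z_s$-piece (whose residue reduces to a boundary contribution at $\ell=2$ of order $c(2,t)$ plus a convergent tail $\sum_{\ell\ge 3}\ell^{-1}c(\ell,t)\le g(t)/3$), one obtains, after absorbing lower-order contributions via the bound from the first paragraph, an estimate of the form
\[
F'(t) \ge C_1\eta\,g(t) - C_2 c(2,t), \qquad t\ge T,
\]
with positive constants $C_1,C_2$. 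Integrating, and using $\int_T^\infty c(2,s)\,ds=g(T)/b_2<\infty$ (from $g'(t)=-b_2c(2,t)$), yields $F(t)\ge F(T)+C_1\eta\int_T^t g(s)\,ds - C_2g(T)/b_2$. If $\int_T^\infty g(s)\,ds=\infty$, the right side diverges, contradicting $F(t)\to 0$. The alternative $\int_T^\infty g(s)\,ds<\infty$ is handled separately: monotonicity of $g$ forces $g(t)=o(1/t)$, and one can extract a subsequence $t_n\to\infty$ along which $c(2,t_n)/g(t_n)\to 0$; then the first paragraph's inequality, together with $\sum_{\ell\ge 2}\ell c(\ell,t_n)=1$, forces $\av{X^{1/3}}_{t_n}=G(t_n)/g(t_n)\to\infty$, whereupon (\ref{P2}) gives $c_1(t_n)-z_s\to 0$, again contradicting the standing hypothesis.

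The principal obstacle is the derivation of the inequality $F'(t)\ge C_1\eta g(t)-C_2c(2,t)$: a naive summation by parts produces two kinds of negative remainders of size $O(g(t))$, one from the $-a_1q$ contribution to the flux and another from the second-order difference $(\ell+1)^{2/3}-2\ell^{2/3}+(\ell-1)^{2/3}$ generated by Abel summation on the $z_s$-term, either of which could in principle absorb the main $C_1\eta g(t)$ contribution. Resolving this requires careful bookkeeping: the $z_s$-telescope must be reorganized so that its dominant contribution reduces to a boundary term controlled by $c(2,t)$, and the residual $O(g(t))$ pieces are then handled by invoking the first-paragraph bound $G\le C_0(g+c(2,t))$ to replace them by terms of order $c(2,t)$, leaving the main $\eta g(t)$ contribution intact.
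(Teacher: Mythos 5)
Your reduction to a contradiction, the use of (\ref{P2}) rewritten as $(c_1(t)-z_s)G(t)=qg(t)+(b_2/a_1)c(2,t)$, and the input $g(t)\to 0$ from Lemma 2.3 all match the paper's strategy, but the step you yourself flag as the principal obstacle --- the differential inequality $F'(t)\ge C_1\eta\,g(t)-C_2\,c(2,t)$ for the rigid weight $\ell^{2/3}$ --- is not established, and the repair you propose cannot supply it. Writing $D^+_\ell=(\ell+1)^{2/3}-\ell^{2/3}$, $D^-_\ell=\ell^{2/3}-(\ell-1)^{2/3}$, the coefficient of $c(\ell,t)$ in $F'(t)$ is $a_1\{D^+_\ell\ell^{1/3}(c_1-z_s)-(D^-_\ell-D^+_\ell)z_s\ell^{1/3}-q\,D^-_\ell\}$, and the loss $-a_1q\,D^-_\ell\approx-\tfrac23 a_1q\,\ell^{-1/3}$ is an $O(g(t))$ term with an $\eta$-independent constant carried by mass at small and moderate $\ell$; your bound $G\le C_0(g+c(2,t))$ only caps $\av{X^{1/3}}_t$ from above, i.e. limits mass at \emph{large} $\ell$, and gives no control of $\sum_{2\le\ell\le(q/\eta)^3}c(\ell,t)$ by $c(2,t)$. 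Worse, the inequality is false as a pointwise statement under exactly the constraints you invoke: put the mass at a single site $\ell_0=(q/\eta)^3$ with $\ell_0c(\ell_0,t)=1$ and $c(2,t)=0$; then (\ref{O2}) and (\ref{P2}) hold with $c_1-z_s=\eta$ exactly, while the coefficient above equals $-a_1(D^-_{\ell_0}-D^+_{\ell_0})(q+z_s\ell_0^{1/3})<0$, so $F'<0$ although $C_1\eta g-C_2c(2,t)>0$. Hence no summation-by-parts bookkeeping using only (\ref{O2}), (\ref{P2}) and the standing hypothesis can yield your estimate. The fallback branch ($\int g<\infty$) has further gaps: one cannot in general extract $t_n$ with $c(2,t_n)/g(t_n)\to0$ (e.g. $c(2)\ge\delta g$ with $g$ decaying exponentially is consistent with everything used), and conservation does not force $G/g\to\infty$ --- Jensen gives $\av{X^{1/3}}_t\le\av{X}_t^{1/3}$, the wrong direction, and indeed under your standing hypothesis $G/g$ is bounded, which is precisely what you would need to disprove.

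The paper circumvents the small-$\ell$ difficulty not by estimating it but by eliminating it: it takes $f(1)=0$, solves the recurrence $[f(\ell+1)-f(\ell)]a_\ell(z_s+\gamma)=[f(\ell)-f(\ell-1)]b_\ell$ for $2\le\ell\le L(\gamma)$, and extends by $f(\ell)=f(L(\gamma))[\ell/L(\gamma)]^{1/3}$, choosing $L(\gamma)$ so that every coefficient in $\frac{d}{dt}\sum_{\ell\ge2}f(\ell)c(\ell,t)$ is nonnegative once $c_1(t)\ge z_s+\gamma$; the troublesome coefficients at small $\ell$ are made exactly zero rather than bounded. Monotonicity of $\sum f(\ell)c(\ell,t)$ together with $f(\ell)\le C_\gamma\ell^{1/3}$ gives $\liminf_{t\to\infty}\sum_{\ell\ge2}\ell^{1/3}c(\ell,t)>0$, and then (\ref{P2}) with Lemma 2.3 forces $c_1(t)\to z_s$, the desired contradiction. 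If you wish to salvage your $\ell^{2/3}$ computation you would have to modify the weight on $2\le\ell\le(q/\eta)^3$ in the same adapted way (or import genuinely dynamical information beyond (\ref{P2})); merely reorganizing the telescope cannot work.
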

\begin{proof}  We assume that $\displaystyle{\liminf_{t\ra\infty}} \; c_1(t) > z_s$ and obtain a contradiction.  Thus we are assuming that there exists $T, \; \ga > 0$ such that $c_1(t) > z_s + \ga$ for $t \ge T$.  Let $f$ be a positive function on the set $\{ \ell \in \Z : \ell \ge 2\}$.  Then from (\ref{C2}) we have that
\begin{eqnarray*}
\frac d{dt} \ \sum^\infty_{\ell = 2} f(\ell)  c(\ell,t) &=& f(2)\; J(1,c,t) + \sum^\infty_{\ell = 2} \Big[f(\ell+1) -f(\ell)\Big] \; J(\ell,c,t)\\
&=& \left\{ \sum^\infty_{\ell = 2} \Big[f(\ell +1) - f(\ell)\Big]  a_\ell c_1(t) - \Big[f(\ell) - f(\ell - 1)\Big]  b_\ell \right\}c(\ell,t).
\end{eqnarray*}
where we put $f(1) = 0$.  We define now $f(\ell)$ by the recurrence,
\[
\Big[f(\ell +1) - f(\ell)\Big]  a_\ell (z_s + \ga) - \Big[f(\ell) - f(\ell - 1)\Big]  b_\ell = 0, \ f(1) = 0,  \]
for $\ell = 2,3,...,L(\ga)$, where $L(\ga)$ is an integer depending only on $\ga$.  For $\ell > L(\ga)$ we define $f(\ell)$ by the formula,
\[	f(\ell) = f\Big( L(\ga) \Big) \Big[\ell\; / \; L(\ga) \Big]^{1/3} .	\]
It is evident that $L(\ga)$ can be chosen in such a way that there is the inequality,
\[
\Big[f(\ell +1) - f(\ell)\Big]  a_\ell (z_s + \ga) - \Big[ f(\ell) - f(\ell - 1)\Big]  b_\ell \ge 0, \ \ell \ge 2.  \]
We conclude therefore that
\[	\sum^\infty_{\ell = 2} f(\ell) c(\ell,t) \ \ge \ \sum^\infty_{\ell = 2} f(\ell) c(\ell,T) , \ \ t \ge T,   \]
whence it follows that
\[	\liminf_{t\ra\infty} \ \sum^\infty_{\ell = 2} \ell^{1/3} \  c(\ell,t) > 0.	\]
Now from (\ref{P2}) and Lemma 2.3 we have that $\displaystyle{\lim_{t\ra\infty}} \; c_1(t) = z_s$, a contradiction. 
\end{proof}
\begin{rem}  The analogue of Lemma 2.4 for the LSW equation is proved in \cite{np1}, Proposition 3.2.
\end{rem}

\section{Existence and Uniqueness for the Continuous Case}
In this section we prove a global existence theorem for solutions to (\ref{J1}) subject to the constraint (\ref{F1}), with non-negative measurable initial data satisfying (\ref{K1}) and Dirichlet boundary condition $c(0,t)=0, \ t>0$.  We shall also assume without loss of generality that in (\ref{F1}) one has $\rho-\rho_{\rm crit}=1$, and in (\ref{J1}) that $\ve=1$.  Observe that the parameter $L(t)$ in (\ref{J1}) ought to be given in terms of the solution $c(x,t)$ by the formula,
\be \label{A3}
L(t)^{1/3}=\int_0^\infty x^{1/3} c(x,t) dx \Bigg/ \int_0^\infty c(x,t) dx
\ee
in order that the conservation law (\ref{F1}) holds. Thus $L(t)$ is proportional to the average cluster radius at time $t$.

Let ${\mathcal L}_t$ denote the differential operator on the RHS of (\ref{J1}) and  ${\mathcal L}_t^*$ its formal adjoint. Thus ${\mathcal L}_t^*$ is given by the formula
\be  \label{B3}
{\mathcal L}_t^* = (1+x)^{1/3} \ \frac{\pa^2}{\pa x^2} - \left[ 1 - \left\{ \frac x{L(t)} \right\}^{1/3} \right] \frac \pa{\pa x} .
\ee
One can estimate the solution of (\ref{J1}) by solving the equation $\pa w/\pa t = - {\mathcal L}^*w$ backwards in time.  Hence if for some $T > 0, \ w(x,t), \ x > 0, \ t <T$, is the solution with Dirichlet boundary condition $w(0,t) = 0$, we have that
\be \label{C3}
\int^\infty_0 \; w(x,T)c(x,T)dx = \int^\infty_0 \; w(x,0)c(x,0)dx .
\ee
It is well known  \cite{ks}  that solutions to parabolic equations can be written as expectation values.  In particular let $X(s), \ s \ge t$, be the solution to the stochastic differential equation,
\be \label{D3}
dX(s) = - \; \left[ 1 - \left\{ \frac{X(s)}{L(s)} \right\}^{1/3} \right]ds + \sqrt{2}\; \left( 1+X(s)\right)^{1/6}\; dW(s),
\ee
with the initial condition $X(t) = x$.  If $\tau_{x,t}$ is the first hitting time for the process at 0, then
\be \label{E3}
w(x,t) = E\left[ w_0(X(T)); \ \tau_{x,t} > T \right], \ \ x >0, \ t < T,
\ee
is the solution to the equation $\pa w / \pa t = - {\mathcal L}^*w, \ t < T$, with terminal data $w(x,T) = w_0(x)$ and Dirichlet boundary condition $w(0,t) = 0$.  We shall show that we can control the solution to this terminal-boundary value problem by perturbation theory uniformly for $x>0$ if $L(t)$ has a positive lower bound and for $T-t$ sufficiently small.

\begin{lem}  Suppose that the parameter $L(t), \ t \ge 0$, is continuous and satisfies $L(t) \ge L_0 > 0, \; t \ge 0$, where $L_0 < 1$.  Let $w(x,t)$ be the function (\ref{E3}) with $w_0 \equiv 1$.  Then there are positive universal constants $C_1, C_2$ such that
\[
C_1 x / \sqrt{T-t} \le w(x,t) \le C_2x / \sqrt{T-t}, \; 0 \le x \le \sqrt{T-t},   \  C_1 \le w(x,t) \le 1, \ x \ge \sqrt{T-t},
\]
provided $t$ lies in the region $0 < T-t < \del\; L^{1/2}_0$, where $\del > 0$ is universal.
\end{lem}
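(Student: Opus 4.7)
The plan is to establish the two-sided bounds via PDE barriers for the backward equation $\pa_t w + {\mathcal L}^* w = 0$ (satisfied by $w$ because of (\ref{E3}) with $w_0 \equiv 1$), combined where needed with a probabilistic argument using the SDE (\ref{D3}). The key observation is that near $x=0$ the operator ${\mathcal L}^*$ is a small perturbation of the constant-coefficient operator $\pa_x^2 - \pa_x$, and the hypothesis $T-t < \del L_0^{1/2}$ with $\del$ small and universal confines the analysis to a short time scale on which this perturbation is absorbed by the leading heat-plus-drift terms.

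For the upper bound I would take as supersolution
\[
\psi(x,t) \ = \ \mathrm{erf}\!\left( \frac{x}{2\sqrt{T-t}} \right),
\]
which satisfies $\psi_t + \psi_{xx} = 0$, $\psi(0,t) = 0$, $\psi(\cdot,T) \equiv 1$ on $(0,\infty)$, and $0 \le \psi \le 1$. A direct computation gives
\[
\psi_t + {\mathcal L}^* \psi \ = \ \bigl((1+x)^{1/3} - 1\bigr)\, \psi_{xx} \ - \ \bigl(1 - (x/L(t))^{1/3}\bigr)\, \psi_x,
\]
which is nonpositive on the region $\{x \ge 0 : (x/L(t))^{1/3} \le 1\}$, because $\psi_{xx} \le 0$ and $\psi_x \ge 0$ for $x \ge 0$. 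Using $L(t) \ge L_0$ together with $\del$ small, this region contains $[0,\sqrt{T-t}]$, and the parabolic maximum principle applied on this strip (with the trivial bound $w \le 1$ at the right boundary $x = \sqrt{T-t}$ absorbed into the terminal condition) gives $w(x,t) \le \psi(x,t)$. Combining with $w \le 1$ and the elementary bound $\mathrm{erf}(y) \le (2/\sqrt{\pi})y$ then yields $w(x,t) \le C_2 x/\sqrt{T-t}$ for $0 \le x \le \sqrt{T-t}$ and $w \le 1$ otherwise.

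For the lower bound I would use the representation (\ref{E3}) together with the strong Markov property. When $x \ge \sqrt{T-t}$, the drift in (\ref{D3}) is bounded in magnitude by $1$ and the variance coefficient is at most $O((1+x)^{1/3})$; a Doob maximal inequality applied to the martingale part, together with the trivial bound $T-t$ on the drift contribution, shows that $X(s)$ stays in $[x/2, 2x]$ on $[t,T]$ with probability at least a universal constant $C_1$, so $w(x,t) \ge C_1$. When $x \le \sqrt{T-t}$, I would apply the strong Markov property at the first hitting time $\sigma$ of level $\sqrt{T-t}$ by $X$. A subsolution dual to $\psi$ above (for instance $\tilde\psi(x,t) = B\,\mathrm{erf}(x/(2\sqrt{a(T-t)}))$ with $a > 1$ chosen so the perturbation terms flip sign, with appropriate boundary corrections near $x = \sqrt{T-t}$) delivers $P(\sigma < T \wedge \tau_{x,t}) \ge C_1' x/\sqrt{T-t}$; on this event the post-$\sigma$ survival probability is at least $C_1$ by the previous step, and the product gives the required $w(x,t) \ge C_1 x/\sqrt{T-t}$.

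The main obstacle is to certify, under the hypothesis $T-t < \del L_0^{1/2}$, that all barrier calculations hold uniformly in $L_0 \in (0,1)$ even along trajectories that may excurse into regions where the perturbation is no longer small. This reduces to a short stay-in-place estimate for (\ref{D3}) showing that $X(s)$ remains with high probability in $[0, C\max(x,\sqrt{T-t})]$, together with a choice of $\del$ that simultaneously absorbs the diffusion error $(1+x)^{1/3} - 1 = O(x)$ and the drift error $(x/L(t))^{1/3}$. The specific power $L_0^{1/2}$ in the hypothesis is what one obtains by balancing the diffusion spread $\sqrt{(1+x)^{1/3}(T-t)}$ against the scale $L_0$ at which the drift correction becomes significant.
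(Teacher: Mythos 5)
There is a genuine gap, and it sits exactly at the point you yourself flag as ``the main obstacle'': uniformity in $L_0\in(0,1)$. Your supersolution computation for the upper bound requires the drift factor $1-(x/L(t))^{1/3}$ to be nonnegative on the whole comparison strip $\{0\le x\le\sqrt{T-t}\}$, and you assert that $L(t)\ge L_0$ plus smallness of $\del$ guarantees this. It does not: the hypothesis only gives $\sqrt{T-t}<\sqrt{\del}\,L_0^{1/4}$, and since $L_0<1$ one has $L_0^{1/4}\gg L_0$, so whenever $L_0<\del^{2/3}$ the strip contains the region $L(t)\le x\le\sqrt{T-t}$ where the drift points \emph{away} from the origin with magnitude up to $(\sqrt{T-t}/L_0)^{1/3}\sim \del^{1/6}L_0^{-1/4}$. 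On that region the term $\big[(x/L(t))^{1/3}-1\big]\psi_x$ is positive and is not dominated by the good term $\big((1+x)^{1/3}-1\big)\psi_{xx}\sim -x^2\psi_x/(T-t)$: for $x$ just above $L_0$ one needs $x^2/(T-t)\gtrsim (x/L_0)^{1/3}-1$, which fails badly when $T-t\sim\del L_0^{1/2}\gg L_0^2$. Enlarging the comparison domain to $\{x\le L(t)\}$ does not help either, since then the barrier must be rescaled by $1/\mathrm{erf}\big(L_0/2\sqrt{T-t}\big)\sim\sqrt{T-t}/L_0$, destroying the estimate. So your argument proves the upper bound only under the stronger restriction $T-t\lesssim \del L_0^2$, not under the stated one $T-t<\del L_0^{1/2}$ with $\del$ universal. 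This is precisely the difficulty the paper's proof is built to handle: it works on the interval $[0,L_0^{1/4}]$, bounds the probability of exiting through the right endpoint by the \emph{explicit} solution of the stationary ODE (giving $Cx/L_0^{1/4}\le C\sqrt{\del}\,x/\sqrt{T-t}$), and controls the survival probability inside by a perturbation series around the frozen-coefficient heat kernel in which every error term carries a factor $(T-t)^{1/2}/L_0^{1/4}\le\sqrt{\del}$; it is this structure, not a sign condition on the drift, that makes the constants universal.

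Two smaller points. At the lateral boundary $x=\sqrt{T-t}$ your barrier equals $\mathrm{erf}(1/2)<1$ while $w$ may be as large as $1$, so the comparison requires multiplying $\psi$ by $1/\mathrm{erf}(1/2)$; ``absorbing into the terminal condition'' as stated is not a valid comparison on the shrinking strip (this is easily repaired, but should be said). In the lower bound, the claim that the drift in (\ref{D3}) is ``bounded in magnitude by $1$'' is false --- it is only bounded \emph{below} by $-1$, and can be large and outward for $x\gg L(t)$; for survival estimates this sign helps you, so the conclusion $w(x,t)\ge C_1$ for $x\ge\sqrt{T-t}$ can be salvaged by using only the one-sided bound (as the paper does by comparing with the constant-drift equation (\ref{F3}) and monotone terminal data), but the stay-in-$[x/2,2x]$ argument as written is not correct because the quadratic variation depends on $\sup X$, which the outward drift does not control. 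The strong Markov decomposition for $0\le x\le\sqrt{T-t}$ is fine in outline, but the ``subsolution dual to $\psi$'' again needs the same uniformity-in-$L_0$ care as above.
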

\begin{proof} We first show the lower bound on $w$.  Since the terminal data $w_0$ is positive and monotonic increasing it follows that $w(x,t) \ge u(x,t)$ where $u(x,t)$ satisfies the equation,
\be \label{F3}
\frac {\pa u}{\pa t} + (1+x)^{1/3} \ \frac {\pa^2 u}{\pa x^2} - \frac {\pa u}{\pa x} = 0, \ \ t < T,
\ee
with Dirichlet boundary condition $u(0,t) = 0$ and terminal data $u(x,T) = w_0(x), \ x \ge 0$.  We can estimate the solution of (\ref{F3}) using perturbation theory provided $T-t < < 1$.  Let $G(x,t)$ be the fundamental solution of the heat equation,
\be \label{CL3}
G(x,t) = \frac 1{\sqrt{4\pi t}} \ \exp \left[ \frac{-x^2}{4t} \right], \ \ t > 0, \ -\infty < x < \infty.
\ee
For $x,y \ge 0, \ t < T$, we define the kernel $K_T(x,y,t)$ by
\be \label{M3}
K_T(x,y,t) = G\left( x-y, \; (1+y)^{1/3}(T-t) \right) - G\left( x+y, \; (1+y)^{1/3}(T-t) \right).
\ee
We have that 
\[	\frac{\pa K_T}{\pa t} + (1+x)^{1/3} \ \frac{\pa^2 K_T}{\pa x^2}- \frac{\pa K_T}{\pa x}= \]
\[   \left\{ a \left( x-y, \; (1+y)^{1/3}(T-t) \right) + \left[ (1+x)^{1/3} - (1+y)^{1/3} \right] b\left( x-y, \; (1+y)^{1/3}(T-t) \right) \right\} \]
\[G \left( x-y, \; (1+y)^{1/3}(T-t) \right) 
-  \bigg\{ a \left( x+y, \; (1+y)^{1/3}(T-t) \right) + \left[ (1+x)^{1/3} - (1+y)^{1/3} \right] \]
\[b\left( x+y, \; (1+y)^{1/3}(T-t) \right) \bigg\} 
 G \left( x+y, \; (1+y)^{1/3}(T-t) \right), \]
where the functions $a,b$ are given by
\[	a(x,t) = x/2t, \ b(x,t) = x^2/4t^2 - 1/2t.	\]
Consider now the function $v(x,t), \ x \ge 0, \ t < T$, defined by
\be \label{G3}
v(x,t) = \int^\infty_0 \ K_T(x, y, t) w_0(y) dy.
\ee
Then one sees from the previous equation that
\be \label{H3}
\frac {\pa v}{\pa t} + (1+x)^{1/3} \ \frac{\pa^2v}{\pa x^2} - \frac{\pa v}{\pa x} = g(x,t), \ x \ge 0, \ t< T,
\ee
where the function $g$ satisfies the inequality,
\be \label{I3}
|g(x,t)| \le C/\sqrt{T-t}, \ \ 0 < T-t < 1,
\ee
for some universal constant $C > 0$.  We also have that $v(0,t) = 0$ and $v(x,T) = w_0(x)$.  Hence $v(x,t)$ is given by the formula,
\[	v(x,t) = u(x,t) - E \left[ \int^T_t \; g\big(X(s),s\big) ds \; ; \; \tau_{x,t} > T \right].	\]
We conclude therefore that $w(x,t) \ge v(x,t) - 2C\sqrt{T-t}$.  
 From (\ref{G3}) we can obtain a lower bound on $v(x,t)$.  To see this first observe from (\ref{M3}) that
\be \label{EA3}
K_T(x,y,t) \ge  \eta_1 G\left( x-y, \; (1+y)^{1/3}(T-t) \right), \quad  |x-y|\le x/2,  \ x\ge\sqrt{T-t},  \ T-t\le 1,
\ee
for some universal $\eta_1>0$. Since $(1+x)^{1/3}(T-t)\le 2x^2$ for $x\ge\sqrt{T-t}, \ T-t\le1$, we also have that
\be \label{EB3}
\int_{|x-y|<x/2} G\left( x-y, \; (1+y)^{1/3}(T-t) \right) dy \ge \eta_2, \quad  x\ge\sqrt{T-t},  \ T-t\le 1,
\ee
for some universal $\eta_2>0$. Hence from (\ref{EA3}), (\ref{EB3}) there exists a universal $\eta_3>0$ such that $v(x,t)\ge \eta_3, \ x\ge\sqrt{T-t},  \ T-t\le 1$.
We conclude that $w(x,t) \ge C_1 > 0$ provided $T-t < \del < 1$ for some suitable universal constant $\del > 0$ if $x$ satisfies $x \ge \sqrt{T-t}$.

To conclude the proof of the lower bound we are left to consider the region $0 \le x \le \sqrt{T-t}$.  We write the solution of (\ref{H3}) with zero Dirichlet and terminal conditions $v(0,t) = 0, \ v(x,T) =  0$, as a series
\begin{eqnarray} \label{J3}
v(x,t) &=& \sum^\infty_{n=0} \ v_n(x,t), \\
v_n(x,t) &=& - \; \int^T_t ds \ \int^\infty_0 \; dy \; K_s(x,y,t)g_n(y,s), \nn \\
g_0 = g, \ g_{n+1} &=& g_n - \left\{ \frac \pa{\pa t} + (1+x)^{1/3} \; \frac {\pa^2}{\pa x^2}  - \frac \pa{\pa x} \right\}v_n, \ n\ge 0. \nn
\end{eqnarray}
 The recurrence relation for $g_n$ can more simply be written as
\[
g_{n+1}(x,t) =  \int^T_t ds  \int^\infty_0 \; dy \left\{ \frac \pa{\pa t} + (1+x)^{1/3} \; \frac {\pa^2}{\pa x^2}-\frac{\pa}{\pa x} \right\} K_s(x,y,t) g_n(y,s).
\]
It follows now in exactly the same way that we derived (\ref{I3}) that there is a universal constant $C > 0$ such that
\[	g_n(x,t) \le C^n(T-t)^{n/2 \;-\; 1/2}, \ \ n \ge 0.	\]
Hence the series (\ref{J3}) converges for $T-t$ sufficiently small.  Observe also that the sum of the $g_n, \ n=0,...,\infty$, is bounded by $C/\sqrt{T-t}$ for some universal constant $C$.  Using the fact that
\[	\int^\infty_0 K_T(x,y,t)dy \le C \min \left[ 1, \ x \big/ \sqrt{T-t} \right],	\]
we can bound the function $v$ in (\ref{J3}).  Suppose $x=\sqrt{s_0 - t}$ where $t < s_0 < t + (T-t)/2$.  Then one has that
\begin{eqnarray*}
 \int^T_t ds  \int^\infty_0 \; dy \ K_s(x,y,t) \; \big/ \; \sqrt{T-s} &\le& C \int^{s_0}_t \; ds \; \Big/ \; \sqrt{T-s} \\
+ \ Cx \; \int^T_{s_0} \; ds \; \Big/ \; \sqrt{(s-t)(T-s)} &\le& 2C \left[ \sqrt{T-t} - \sqrt{T-s_0} \right] \\
+\  \frac{Cx\sqrt{2}}{\sqrt{T-t}} \int^{(T+t)/2}_{s_0} \ \frac{ds}{\sqrt{s-t}} &+& \frac{Cx\sqrt{2}}{\sqrt{T-t}} \int^T_{(T+t)/2} \ \frac{ds}{\sqrt{T-s}} \le C'x,
\end{eqnarray*}
for some constant $C'$.  The lower bound follows easily now by obtaining a lower bound on the function (\ref{G3}) and using the previous inequality.

We consider next the upper bound.  Since it is clear that $w(x,t) \le 1$ we need only restrict ourselves to $x$ in the region $0 \le x \le \sqrt{T-t}$.  Let $u(x,t)$ satisfy the equation,
\be \label{K3}
\frac {\pa u}{\pa t} + (1+x)^{1/3} \; \frac {\pa^2 u}{\pa x^2}  + \left[ \left( \frac x{L_0}\right)^{1/3} - 1\right] \frac {\pa u}{\pa x} = 0,    \  \  t<T,
\ee
with Dirichlet boundary condition $u(0,t) = 0$ and terminal data $u(x,T) = w_0(x), \ x \ge 0$.  Since the data $w_0$ is positive and monotonic increasing we have that $w(x,t) \le u(x,t)$.  To obtain an upper bound on $u(x,t)$ we first note that $u(x,t) = P(\tau_{x,t} > T)$ where $\tau_{x,t}$ is the first hitting time at 0 for the process (\ref{D3}) started at $x$ at time $t$ with $L(s) \equiv L_0$.  For $0 < x < L^{1/4}_0$ let $\tau^*_{x,t}$ be the first exit time from the interval $[0, L^{1/4}_0]$.  Evidently one has
\[	P(\tau_{x,t} > T) \le P(\tau^*_{x,t} > T) + u(x),	\]
where $u(x)$ is the probability that the process started at $x$ exits the interval $[0, L^{1/4}_0]$ through the boundary $L^{1/4}_0$.  Now $u(x)$ satisfies the boundary value problem,
\[   (1+x)^{1/3} \; \frac {\pa^2 u}{\pa x^2}  + \left[ \left( \frac x{L_0}\right)^{1/3} - 1\right] \frac {\pa u}{\pa x} = 0, \ 0 < x < L^{1/4}_0, \ u(0)=0, \ u(L^{1/4}_0) = 1.  \]
The problem is explicitly solvable with solution,
\[	u(x) = \int^x_0 \ \exp\left[ - \int^z_0 h(z')dz' \right] dz \ \Big/ \ \int^{L^{1/4}_0}_0 \exp \left[ - \int^z_0 h(z')dz' \right] dz,  \ 0 < x < L^{1/4}_0, \]
where $h(z) = \left[ (z/L_0)^{1/3} - 1 \right] \big/ [1+z]^{1/3}$.  Since $L_0 < 1$ we conclude that there is a universal constant $C > 0$ such that $u(x) \le Cx/L^{1/4}_0$.  Hence to obtain the upper bound on $w(x,t)$ for $0 < x < \sqrt{T-t}$ it is sufficient to obtain an upper bound on $P(\tau^*_{x,t} > T)$.

We can do this by perturbation theory just as we did for the lower bound.  Thus setting $u(x,t) = P(\tau^*_{x,t} > T)$ it is clear that $u(x,t)$ satisfies (\ref{K3}) for $0 < x < L^{1/4}_0, \ t < T$, with terminal condition $u(x,T) = 1$ and boundary conditions $u(0,t) = u(L^{1/4}_0, \; t) = 0$ .  Let $G_D(x,y,t)$ be the Green's function for the heat equation on the interval $\big[ 0, L^{1/4}_0\big]$ with Dirichlet boundary conditions.  Thus $G_D$ is given by the method of images as an infinite series,
\be \label{P3}
G_D(x,y,t) = \sum^\infty_{m=0} (-1)^{p(m)} \; G(x-y_m, \; t),
\ee
where $y_0 = y$ and $y_m, \; m\ge 1$, are the multiple reflections of $y$ in the boundaries $0, L^{1/4}_0$, with $p(m)$ being the parity of the reflection, $p(0) = 0$.  For $0 \le x,y \le L^{1/4}_0$ and $t < T$ we define the kernel $K_T(x,y,t)$ by 
\be \label{R3}
K_T(x,y,t)  = G_D \left( x, y, (1+y)^{1/3}(T-t) \right).
\ee
We have then that
\begin{multline}         \label{L3}
\frac{\pa K_T}{\pa t} + (1+x)^{1/3} \frac{\pa^2 K_T}{\pa x^2} + \Big[ \Big( \frac x{L_0}\Big)^{1/3} - 1 \Big] \frac{\pa K_T}{\pa x}  \\
= \sum^\infty_{m=0} (-1)^{p(m)} G\left( (x-y_m), (1+y)^{1/3}(T-t) \right) \Big\{ a\big(x-y_m, (1+y)^{1/3}(T-t) \big)\Big[1 - \Big( \frac x{L_0}\Big)^{1/3}\Big]   \\
+ \left[ (1+x)^{1/3} - (1+y)^{1/3} \right] b(x-y_m, (1+y)^{1/3}  (T-t)) \Big\}, 
\end{multline}
just as in (\ref{M3}).  Consider now the function $v(x,t), \ 0 \le x \le L^{1/4}_0$ defined by
\be \label{N3}
v(x,t) = \int^{L^{1/4}_0}_0 K_T(x,y,t)dy, \ \ \ t < T.
\ee
Then one has that
\[   \frac{\pa v}{\pa t}+(1+x)^{1/3} \; \frac {\pa^2 v}{\pa x^2}  + \left[ \left( \frac x{L_0}\right)^{1/3} - 1\right] \frac{\pa v}{\pa x} = g(x,t), \]
where the function $g$ satisfies the inequality,
\[	|g(x,t)| \le C\; \big/ \; L^{1/4}_0 \; \sqrt{T-t}.   \]
Arguing as previously we thus obtain the upper bound on $w(x,t)$ for $0 \le x \le \sqrt{T-t}$ provided $T-t < \del L^{1/2}_0$ for suitably small $\del$ independent of $L_0$. 
\end{proof}
\begin{lem}  Suppose that the parameter $L(t), \ t \ge 0$, is continuous and satisfies $L(t) \ge L_0 >0, \ t \ge 0$,  where $L_0 < 1$.  Let $w(x,t)$ be the function (\ref{E3}) with $w_0(x) = x^{1/3}, \ x > 0$.  Then there are positive universal constants $C_1, C_2$ such that
\begin{eqnarray*}
C_1x \; \Big/ \; (T-t)^{1/3} &\le& w(x,t) \le C_2 x \; \Big/ \; (T-t)^{1/3}, \ 0 \le x \le \sqrt{T-t}, \\
C_1x^{1/3} &\le& w(x,t) \le C_2 x^{1/3}, \  x \ge \sqrt{T-t}, 
\end{eqnarray*}
provided $t$ lies in the region $0 < T-t < \del L^{1/2}_0$, where $\del >0$ is universal.
\end{lem}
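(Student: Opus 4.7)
The plan is to adapt the proof of Lemma 3.1, leveraging the same comparison arguments and perturbative parametrix construction, but now tracking the explicit $x$-dependence inherited from the terminal data $w_0(x) = x^{1/3}$.

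For the lower bound, I would proceed exactly as in Lemma 3.1: since $w_0$ is nonnegative and monotone increasing on $[0,\infty)$, a coupling/comparison argument gives $w(x,t) \ge u(x,t)$, where $u$ satisfies (\ref{F3}) with Dirichlet condition $u(0,t) = 0$ and terminal data $u(x,T) = x^{1/3}$; replacing $[1-(x/L(s))^{1/3}]$ by $1$ makes the drift more negative, pushing the process to smaller values and bringing the killing time earlier, both of which lower $w$. I would then form the parametrix approximation
\[
v(x,t) = \int_0^\infty K_T(x,y,t)\, y^{1/3}\, dy,
\]
using the kernel (\ref{M3}), and expand $u = \sum_n v_n$ via the recursion (\ref{J3}). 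The convergence of the series for $T-t$ sufficiently small follows exactly as in Lemma 3.1, provided one shows that the forcing $g(x,t)$ satisfies a scale-sensitive bound of the form $|g(x,t)| \le C\, w(x,t)/\sqrt{T-t}$ (rather than the absolute bound (\ref{I3})), so the remainder is smaller than the leading term in the claimed regimes.

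The essential content of the lemma then reduces to estimating the parametrix $v(x,t)$. For $x \ge \sqrt{T-t}$, the Gaussian factor in $K_T$ is concentrated near $y=x$ with width $\sigma = \bigl((1+x)^{1/3}(T-t)\bigr)^{1/2} = o(x)$, while the image part is exponentially small; hence one proves $v(x,t) \approx x^{1/3}\int_0^\infty K_T(x,y,t)\,dy$ and invokes the Lemma 3.1 bounds to sandwich this factor between universal constants, giving $v(x,t) \asymp x^{1/3}$. For $x \le \sqrt{T-t}$, I would expand the difference of Gaussians in (\ref{M3}) to first order in $x$, producing $K_T(x,y,t) \sim (2xy/\sigma^2)\, G(y,\sigma^2)$ in the range $x \ll y$; a direct Gaussian integral then gives $v(x,t) \sim x\,(T-t)^{-1/3}$, matching the Lemma~3.1 prefactor at the interface $x = \sqrt{T-t}$ where both scales equal $(T-t)^{1/6}$.

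For the upper bound, I would mirror Lemma~3.1 by comparing with the solution of (\ref{K3}) with terminal data $x^{1/3}$ and Dirichlet condition at $0$. Since $L(s) \ge L_0$, the modified drift $-[1-(x/L_0)^{1/3}]$ shifts the process to the right, and monotonicity of $w_0$ together with later killing yields $w(x,t) \le \tilde u(x,t)$. In the region $x \ge \sqrt{T-t}$ the boundary effect is lost in the perturbation error and the parametrix estimate above gives the bound directly. In the region $x \le \sqrt{T-t}$ I would localize the process to $[0,L_0^{1/4}]$, use the reflected-image Green's function (\ref{P3})--(\ref{R3}) to set up the parametrix, and control the escape probability through $L_0^{1/4}$ by the explicit 1D ODE solution given in the proof of Lemma~3.1; this yields $\tilde u(x,t) \le C\,x\,(T-t)^{-1/3}$ once one observes that the typical surviving path has $X(T) = O(\sqrt{T-t})$, contributing $w_0(X(T)) = O((T-t)^{1/6})$ multiplied by a survival probability of order $x/\sqrt{T-t}$.

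The main obstacle I anticipate is the careful treatment of the perturbation remainders when $w_0$ is unbounded. The bound (\ref{I3}) was sufficient in Lemma~3.1 only because $v$ there was $\Theta(1)$; here the leading term $v(x,t)$ varies by a factor $(1 + x)^{1/3}$ across the relevant spatial scales, so the iteration estimates for $g_n$ must be refined to carry a factor commensurate with $v(x,t)$ itself. The cleanest way is probably to rescale $w_0 \mapsto w_0/v$ locally in $x$ (using the scale separation $\sigma \ll x$ when $x \ge \sqrt{T-t}$, respectively the linear behavior $v \sim x \cdot (T-t)^{-1/3}$ when $x \le \sqrt{T-t}$) and then invoke the universal bounds from Lemma~3.1 in those two regimes separately.
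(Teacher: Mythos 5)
Your lower bound and your overall framework (comparison with (\ref{F3}) and (\ref{K3}), parametrix built from (\ref{M3}) or (\ref{P3})--(\ref{R3}), localization to $[0,L_0^{1/4}]$) follow the paper's route, but your upper bound has a genuine gap exactly where the paper makes its one essential modification relative to Lemma 3.1: the contribution of paths that \emph{leave} $[0,L_0^{1/4}]$ through the upper endpoint. You propose to ``control the escape probability through $L_0^{1/4}$ by the explicit 1D ODE solution,'' and to argue that the ``typical surviving path has $X(T)=O(\sqrt{T-t})$, contributing $w_0(X(T))=O((T-t)^{1/6})$.'' That reasoning is only valid for bounded terminal data. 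Here $w_0(x)=x^{1/3}$ is unbounded, so after exiting at $L_0^{1/4}$ the process can drift far to the right before time $T$, and the escaping paths contribute $E\big[w_0(X(T));\ \text{exit at } L_0^{1/4}\big]$, not merely (exit probability)$\times O((T-t)^{1/6})$. The paper closes this by writing, as in (\ref{O3}), $u(x,t)\le E[w_0(X(T));\tau^*_{x,t}>T]+u(x)\sup_{t<t'<T}u(L_0^{1/4},t')$ and then proving $u(L_0^{1/4},t')\le CL_0^{1/12}$ by an It\^{o}/Gronwall second-moment estimate ($E[X(t'\wedge\tau)^2]\le C_1L_0^{1/2}\exp[C_2(t'-t)/L_0^{1/2}]$, followed by $E[X^{1/3}]\le (E[X^2])^{1/6}$). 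Combined with the exit-probability bound $u(x)\le Cx/L_0^{1/4}$ this gives a contribution $\le CxL_0^{-1/6}\le C'x/(T-t)^{1/3}$ because $T-t<\del L_0^{1/2}$. This moment estimate is the new ingredient of Lemma 3.2 and is absent from your proposal; your closing paragraph flags the unboundedness of $w_0$, but only in connection with the perturbation remainders, and the local rescaling you suggest does not supply a bound on the escaped-path term.

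A secondary weak point: for $x\ge\sqrt{T-t}$ you assert the half-line parametrix ``gives the bound directly,'' but for the upper-bound comparison equation (\ref{K3}) the drift coefficient $(x/L_0)^{1/3}-1$ grows with $x$, so the uniform error bounds of type (\ref{I3}) used in Lemma 3.1 do not carry over to the whole half-line without further argument; the probabilistic moment bound above (or an equivalent localization) is also what covers this regime in the paper. Incorporating the It\^{o}/Gronwall/Chebyshev estimate for the process restarted at $L_0^{1/4}$ (and, for large $x$, the analogous bound $E[X(T\wedge\tau)^2]\le C(x^2+L_0^{1/2})$) would close both gaps.
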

\begin{proof} The proof of the lower bound is similar to the proof of the lower bound in Lemma 3.1.  For the proof of the upper bound however we need to make a modification since $w_0(x) = x^{1/3}$ is an unbounded function.  Our starting point is as before that we wish to find an upper bound on the solution to (\ref{K3}).  Then if $X(s)$ is the stochastic process started at $x$ at time $t$ which is associated with the PDE (\ref{K3}), we have that
\begin{eqnarray} \label{O3}
u(x,t) &=& E\left[ w_0 (X(T)); \tau_{x,t} > T \right] \\
&\le& E\left[ w_0 (X(T)); \tau^*_{x,t} > T \right] + u(x) \sup_{t< t' < T} u\Big(L^{1/4}_0, \; t'\Big), \nn
\end{eqnarray}
using the notation of Lemma 3.1.  The first term on the right can be bounded above using perturbation theory as before.  We shall therefore be finished if we can show that $u\Big(L^{1/4}_0, \; t'\Big) \le CL^{1/12}_0$ for $t< t' < T$, provided $T-t< L^{1/2}_0$.  We can use Ito calculus to prove this.  Thus we have that
\[	dX(s) \ = \  \left[ \left\{ \frac{X(s)}{L_0} \right\}^{1/3}- 1 \right]ds + \sqrt{2} \; \Big(1 + X(s)\Big)^{1/6} \; dW(s).	\]
It follows that if $X(t) = x$, then for $t' > t$,
\begin{eqnarray*}
X(t')^2 &=& x^2 + 2 \int^{t'}_t  X(s) \ \left[ \left\{ \frac{X(s)}{L_0} \right\}^{1/3} - 1 \right]ds \\
&+& 2 \int^{t'}_t  \Big( 1 + X(s)\Big)^{1/3}\; ds + 2\sqrt{2}  \int^{t'}_t X(s) \Big(1 + X(s)\Big)^{1/6} \; dW(s).
\end{eqnarray*}
Setting $\tau = \tau_{x, t}$ we conclude that
\[
E\big[ X(t' \wedge \tau)^2\big] = x^2 + E\left[ \int^{t'\wedge \tau}_t  X(s) \ \left[ \left\{ \frac{X(s)}{L_0} \right\}^{1/3}- 1 \right]ds \right] + 2 \left[ \int^{t'\wedge \tau}_t  \Big( 1 + X(s)\Big)^{1/3}\; ds \right].
\]
We take now $x = L^{1/4}_0$ and restrict $t'$ by $t' - t < L^{1/2}_0$.  It follows that there is the inequality,
\[   E\big[ X(t' \wedge \tau)^2\big] \le C_1L^{1/2}_0 + C_2L^{-1/2}_0 \  E\left[ \int^{t'\wedge \tau}_t  X(s)^2 \; ds \right] , \]
for some universal constants $C_1, C_2$.  Gronwall's inequality therefore yields,
\[    E\big[ X(t' \wedge \tau)^2\big] \le C_1L^{1/2}_0  \exp \left[ C_2 (t' - t) \big/ L^{1/2}_0 \right].   \]
We can now apply the Chebyshev inequality to conclude that $u(L_0^{1/4},t')\le CL_0^{1/12}$. 
\end{proof}
\begin{lem}  Let $w(x,t)$ be the function (\ref{E3}) with $w_0 \equiv 1$.  Then $0 \le \pa w(x,t)/\pa x \le C/\sqrt{T-t}$, provided $0 < (T-t) < \del L^{1/2}_0$.
\end{lem}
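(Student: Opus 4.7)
The plan is to handle the two inequalities separately, deriving the nonnegativity from a one-dimensional SDE comparison argument and the upper bound by directly differentiating the Gaussian perturbation construction used in the proof of Lemma 3.1.

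For $\partial w/\partial x \ge 0$, I would use the probabilistic representation $w(x,t) = P(\tau_{x,t}>T)$ coming from (\ref{E3}) with $w_0 \equiv 1$. On $\{x>0\}$ both coefficients of the SDE (\ref{D3}) are locally Lipschitz (the diffusion $\sqrt{2}(1+x)^{1/6}$ is smooth and the drift $-[1-(x/L(s))^{1/3}]$ is smooth away from $0$), so pathwise uniqueness holds and the standard one-dimensional comparison theorem applies to two copies of (\ref{D3}) driven by the same Brownian motion with initial data $0<x_1<x_2$ at time $t$. This gives $X_{x_1,t}(s)\le X_{x_2,t}(s)$ for all $s\ge t$, hence $\tau_{x_1,t}\le\tau_{x_2,t}$ and therefore $w(x_1,t)\le w(x_2,t)$; combined with the parabolic regularity produced by the perturbation series, this yields $\partial w/\partial x\ge 0$ pointwise.

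For the upper bound I would differentiate the representation $v(x,t)=\int_0^\infty K_T(x,y,t)\,dy$ of (\ref{G3}) (with $w_0\equiv 1$) together with the iterates $v_n$ of the series (\ref{J3}). Using the identity $\partial_x G(z,\sigma^2) = -(z/2\sigma^2)G(z,\sigma^2)$ from (\ref{CL3}) and the elementary computation $\int_{-\infty}^\infty|\partial_x G(z,\sigma^2)|\,dz = 1/(\sigma\sqrt{\pi})$, together with the lower bound $\sigma^2=(1+y)^{1/3}(T-t)\ge T-t$, I obtain
\[
\int_0^\infty \bigl|\partial_x K_T(x,y,t)\bigr|\,dy \;\le\; \frac{C}{\sqrt{T-t}}, \qquad x\ge 0,
\]
so $|\partial_x v(x,t)|\le C/\sqrt{T-t}$. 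For the corrections, the bound $\|g_n(\cdot,s)\|_\infty\le C^n(T-s)^{n/2-1/2}$ established in the proof of Lemma 3.1 combined with the same $K_s$-differentiation estimate yields
\[
|\partial_x v_n(x,t)| \;\le\; C^{n+1}\int_t^T \frac{(T-s)^{n/2-1/2}}{\sqrt{s-t}}\,ds,
\]
which for $n=0$ is a convergent beta integral and for $n\ge 1$ is a bounded quantity geometrically small in $n$ once $T-t$ is small enough. Summing in $n$ and adding the leading term gives the claimed bound $\partial w/\partial x \le C/\sqrt{T-t}$ uniformly in $x\ge 0$ and $0<T-t<\del L_0^{1/2}$.

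The main technical obstacle I anticipate is justifying the termwise $x$-differentiation up to the Dirichlet boundary $x=0$, where $w$ itself is singular on the scale $x/\sqrt{T-t}$. The antisymmetric image structure $G(x-y,\sigma^2)-G(x+y,\sigma^2)$ in (\ref{M3}) is essential here: it prevents the $y$-integral of $|\partial_x K_T|$ from picking up an extra $1/x$ and keeps the boundary contribution at the same order $1/\sqrt{T-t}$ as in the interior.
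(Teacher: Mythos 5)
Your argument for $\pa w/\pa x\ge 0$ is fine (monotonicity of the hitting probability in the starting point), and this is essentially how the paper disposes of that half. The problem is in the upper bound. The kernel $K_T$ of (\ref{M3}), the series (\ref{J3}), and the estimate $\|g_n(\cdot,s)\|_\infty\le C^n(T-s)^{n/2-1/2}$ that you quote from Lemma 3.1 all pertain to the \emph{comparison} equation (\ref{F3}), whose drift is the constant $-1$; the sum of that series is the comparison solution $u$, not $w$. The inequalities $u\le w$ (and $w\le \bar u$ with $\bar u$ from (\ref{K3})) give pointwise control of $w$ but say nothing about $\pa w/\pa x$, so differentiating that series bounds $\pa u/\pa x$, not the quantity in the lemma. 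If instead you intend the same Gaussian parametrix to be run for the true operator ${\mathcal L}_t^*$ of (\ref{B3}), the error term acquires the contribution $(x/L(t))^{1/3}\,\pa_x K_T$, whose $y$-integral is of order $(x/L_0)^{1/3}/\sqrt{T-t}$; this is unbounded in $x$, the constants in the $g_n$ bounds are no longer universal, and the series does not converge uniformly on the half line for $T-t$ small independently of $x$. So the claimed uniform bound $C/\sqrt{T-t}$ for all $x\ge 0$ does not follow from the proposed computation.

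This is precisely why the paper's proof is structured differently: the parametrix argument is localized to the interval $[0,L_0^{1/4}]$, where the extra drift is controlled by $L_0^{-1/4}$, and $w$ is split as $w_1+w_2$ with $w_1$ carrying the terminal data (estimated by the series built from (\ref{R3}), giving $|\pa w_1/\pa x|\le C/\sqrt{T-t}$) and $w_2$ carrying the nonzero lateral data at $x=L_0^{1/4}$, estimated through the representation (\ref{Q3}); the latter requires the mixed-derivative Green's function bounds (\ref{Y3})--(\ref{UU3}), obtained only after integrations by parts in (\ref{W3})--(\ref{X3}) to remove nonintegrable singularities — a step with no counterpart in your sketch. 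For $x\ge L_0^{1/4}/2$ the paper abandons the parametrix altogether: $v=\pa w/\pa x$ solves (\ref{AA3}), which has the zeroth-order coefficient $1/3\{x^2L(t)\}^{1/3}$, and a Feynman--Kac/maximum-principle bound using $x\ge L_0^{1/4}/2$ and $T-t<\del L_0^{1/2}$ yields $v\le C/L_0^{1/4}\le C'/\sqrt{T-t}$. To salvage your approach you would need either weighted (in $x$) parametrix estimates for the true drift on the whole half line, or some argument of this second type for large $x$; as written, the unbounded drift and the confusion between $u$ and $w$ leave a genuine gap.
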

\begin{proof} We have already observed that $0 \le \pa w(x,t)/\pa x $ so we need an upper bound.  We first prove the upper bound for $0 \le x \le L^{1/4}_0\big/ 2$.  To do this we observe that $w(x,t), \; 0 \le x \le L^{1/4}_0, \ t < T$, satisfies the diffusion equation with terminal conditions $w_0(x)=1,  \ 0 \le x \le L^{1/4}_0$, and boundary conditions $w(0,t) = 0,  \ w \big(  L^{1/4}_0, t\big) \le 1, \ t < T$.  We write $w(x,t) = w_1(x,t) + w_2(x,t)$ where both $w_1$ and $w_2$ satisfy the diffusion equation.  The function $w_1(x,t)$ has terminal data $w_1(x,T) = w_0(x), \  0 \le x \le L^{1/4}_0$, and boundary data $w_1(0,t)=w_1 \big(  L^{1/4}_0, t\big) = 0$.  The function $w_2(x,t)$ has terminal data $w_2(x,T) = 0, \ 0\le x \le L_0^{1/4}$, and boundary data  $w_2(0,t)=0, \ w_2 \big(  L^{1/4}_0, t\big) = w\big(L^{1/4}_0, t\big)$.

We first consider the function $w_1(x,t)$ which we construct by perturbation expansion.  The first term in the series is the function $v(x,t)$ of (\ref{N3}).  Setting $g(x,t) = \pa v / \pa t + {\mathcal L}_t^*v$, then
\begin{eqnarray*}
w_1(x,t) &=& v(x,t) - \sum^\infty_{n=0} \ v_n(x,t), \\
v_n(x,t) &=&- \int^T_t ds \int^{L^{1/4}_0}_0 dy \ K_s(x,y,t) g_n(y,s), \\
g_0=g, & \ &  \ g_{n+1} = g_n - \left\{ \frac \pa {\pa t} + {\mathcal L}_t^* \right\} v_n, \ n \ge 0.
\end{eqnarray*}
Just as before we see that the functions $g_n$ satisfy the inequality,
\be \label{S3}
|g_n(x,t)| \le C^n(T-t)^{n/2 \;-\; 1/2} \ \Big/ \ L^{(n+1)/4}_0, \ \ n \ge 0.
\ee
It easily follows that $|\pa w_1/\pa x| \le C_1 \big/ \sqrt{T-t}$ for some constant $C_1$ provided $0 < T-t < \del L^{1/2}_0$.

Next we consider the function $w_2(x,t)$.  This can be represented in terms of a Green's function $G(x,y,t,T)$, $0 \le x,y \le L^{1/4}_0, \; t<T$, which is the Dirichlet Green's function for the operator ${\mathcal L}_t^*$ on the interval $\big[0, L^{1/4}_0\big]$.  Thus if
\[	u(x,t) = \int^{L^{1/4}_0}_0 \; G(x,y,t,T)  w_0(y)dy, \quad t < T,   \]
then
\begin{multline*} 
\frac {\pa u} {\pa t} + {\mathcal L}_t^*u = 0, \ 0 < x < L^{1/4}_0, \quad t<T, \\ 
 u(0,t) = u\big( L^{1/4}_0, t \big)=0,  \quad u(x,T) = w_0(x),  \ 0 < x < L^{1/4}_0.  
 \end{multline*}
 The function $w_2(x,t),  \ t < T$, has the representation,
\be \label{Q3}
w_2(x,t) = - \int^T_t ds \; w\left( L^{1/4}_0, s\right) \frac{\pa G}{\pa y} \left(x, L^{1/4}_0, t, s\right) \left[1 + L^{1/4}_0\right]^{1/3}.
\ee
To estimate $w_2(x,t)$ we compute the Green's function by perturbation series expansion as we did before.  The first term in the expansion for $G(x,y,t,T)$ is the function $K_T(x,y,t)$ of (\ref{R3}).  If we replace $G$ by this in (\ref{Q3}) it is easy to see that $|\pa w_2 / \pa x| \le C(T-t) \; \big/ \; L^{3/4}_0$, $0 < x < L^{1/4}_0 \big/ 2$.  The complete expansion for $G(x,y,t,T)$ is given by
\begin{eqnarray} \label{T3}
G(x,y,t,T) &=& K_T(x,y,t) - \sum^\infty_{n=0} v_{n,T}(x,y,t), \\
v_{n,T}(x,y,t) &=& - \int^T_t ds \int^{L^{1/4}_0}_0 dy' K_s(x,y',t) g_{n,T}(y',y,s), \nn \\
g_{0,T}(x,y,t) &=& g_T(x,y,t) = \left[ \frac \pa{\pa t} + {\mathcal L}_{t,x}^* \right] K_T(x,y,t), \nn \\
g_{n+1,T} &=& g_{n,T} - \left\{ \frac \pa{\pa t} + {\mathcal L}_t^* \right\} v_{n,T}, \ n \ge 0, \nn \\
g_{n+1,T}(x,y,t) &=&  \int^T_t ds \int^{L^{1/4}_0}_0 dy' \left\{ \frac \pa{\pa t} + {\mathcal L}_{t,x}^* \right\} K_s(x,y',t) g_{n,T}(y',y,s). \nn
\end{eqnarray}
In(\ref{T3}) we have written ${\mathcal L}_t^*={\mathcal L}_{t,x}^*$ to denote that the operator ${\mathcal L}_t^*$ acts on the $x$ variable.
Analogously to (\ref{S3}) we have the estimate
\be \label{U3}
|g_{n,T}(x,y,t)| \le \frac{C^n(T-t)^{n/2 \;-\; 1/2}}{L^{(n+1)/4}_0} \ G\left( x-y, \Big( 1 + L^{1/4}_0\Big)^{1/3} (T-t) \right),
\ee
for some universal constant $C$.  Hence the series (\ref{T3}) converges for $0 < T-t < \del L^{1/2}_0$.  We need now to differentiate the series term by term with respect to the $y$ variable.  Consider first the function $v_{0,T}$  whose derivative is formally given by the expression
\be \label{V3}
\frac{\pa v_{0,T}} {\pa y} (x,y,t) = - \int^T_t ds \int^{L^{1/4}_0}_0 dy' \ K_s(x,y',t) \frac{\pa g_T}{\pa y}(y',y,s).
\ee
Analogously to (\ref{U3}) one has the estimate,
\[	\Big|\frac{\pa g_T(x,y,t)}{\pa y}\Big| \le \frac C{(T-t)L^{1/4}_0}  \ G\left( x-y, \left(1 + L^{1/4}_0 \right)^{1/3} (T-t) \right).  \]
This estimate gives a nonintegrable singularity in (\ref{V3}), so we need to integrate by parts in (\ref{V3}) in the $s,y'$ variables.  First we write
\be \label{W3}
\frac{\pa v_{0,T}}{\pa y} (x,y,t) = - \int^{(T+t)/2}_t ds \int^{L^{1/4}_0}_0 dy' \ K_s(x,y',t) \frac{\pa g_T}{\pa y}(y',y,s)
\ee
\[	- \int_{(T+t)/2}^T ds \int^{L^{1/4}_0}_0   K_s(x,y',t) \left\{ \frac{\pa}{\pa s} + {\mathcal L}^*_{s,y'} \right\} \frac{\pa K_T}{\pa y}(y',y,s).
\]
We have now from (\ref{R3}), (\ref{L3}) that
\be \label{X3}
\left\{ \frac{\pa}{\pa s} + {\mathcal L}^*_{s,y'} \right\} \frac{\pa K_T}{\pa y}(y',y,s) = \left[ \left\{ \frac{y'}{L(s)}\right\}^{1/3} - 1 \right] \frac{\pa^2 K_T}{\pa y'\pa y}
\ee
\[	+ \frac{(1+y')^{1/3}} {3(1+y)^{4/3}} \ \frac{\pa K_T}{\pa s} + \left[ 1 - \left( \frac{1+y'}{1+y} \right)^{1/3}  \right] 
\frac{\pa^2K_T}{\pa s\pa y}.	\]
We substitute the RHS of (\ref{X3}) into the second integral on the RHS of (\ref{W3}).  We then integrate by parts w.r. to $y'$ for the first term on the RHS of (\ref{X3}), and w.r. to $s$ for the second two terms.  Note that since $K_s(x,0,t) = K_s(x, L^{1/4}_0, t) = 0$ there are no boundary terms in the integration by parts w.r. to $y'$.  Once this is accomplished we can estimate $\pa v_{0,T} / \pa y$ since we have removed all non integrable singularities.  Thus we obtain the bound,
\be \label{Y3}
\left| \frac{\pa v_{0,T}}{\pa y} (x,y,t) \right| \le \frac C{L^{1/4}_0} \ G\Big(x - y, \Big( 1 + L^{1/4}_0 \Big)^{1/3} (T-t)\Big).
\ee
Similarly we also have that
\be \label{Z3}
\left| \frac{\pa g_{1,T}}{\pa y} \; (x,y,t)\right| \le  \frac C{ L^{1/2}_0\sqrt{T-t} } \ G\Big(x - y, \Big( 1 + L^{1/4}_0 \Big)^{1/3} (T-t)\Big).
\ee
Once we have the estimate of (\ref{Z3}) it follows by the same method as was used to derive (\ref{U3}) that
\be \label{WW3}
\left| \frac{\pa g_{n,T}}{\pa y} \; (x,y,t)\right| \le  \frac {C^n(T-t)^{n/2-1}} { L_0^{(n+1)/4} } \ G\Big(x - y, \Big( 1 + L^{1/4}_0 \Big)^{1/3} (T-t)\Big), \ n \ge 1.
\ee
Hence from (\ref{T3}) we also have that
\be \label{UU3}
\left| \frac{\pa^2 v_{n,T}}{\pa x\pa y} \; (x,y,t)\right| \le  \frac {C^n(T-t)^{n/2-1/2}} { L_0^{(n+1)/4} } \ G\Big(x - y, \Big( 1 + L^{1/4}_0 \Big)^{1/3} (T-t)\Big), \ n \ge 1.
\ee
Now, just as we derived (\ref{Y3}) we can see that (\ref{UU3}) also holds for $n=0$.  We conclude therefore that there is a universal constant $C$ such that
\[
\left| \frac{\pa^2G}{\pa x \pa y} (x, L^{1/4}_0, t, s) \right| \le \frac C{L^{3/4}_0},  \quad 0 < x < L^{1/4}_0\big/ 2, \ 0 < s-t<\del L^{1/4}_0.
\]
It follows then from (\ref{Q3}) that one has $|\pa w_2(x,t) / \pa x| \le C\big/ L^{1/4}_0$,  \ $0 < x < L^{1/4}_0\big/ 2, \ T-t < \del L^{1/4}_0$. 

We consider next the case $x \ge L^{1/4}_0\big/ 2$.  First note that we have shown that $\pa w(x,t)/\pa x \le C\big/L^{1/4}_0$, $x = L^{1/4}_0\big/ 2, \ T-t < \del L^{1/4}_0$.  Thus if $v(x,t) = \pa w(x,t)/\pa x, \  x > L^{1/4}_0\big/ 2, \; t<T$, then it is clear that $v$ is the solution to the terminal- boundary value problem, 
\begin{multline} \label{AA3}
\frac {\pa v}{\pa t} + (1+x)^{1/3}  \frac {\pa^2 v}{\pa x^2} + \left[ \frac 1{3(1+x)^{2/3}} + \left\{\frac x{L(t)}\right\}^{1/3} -1\right] \frac {\pa v}{\pa x} \\
 + \frac 1{3\{x^2 L(t)\}^{1/3}}  v = 0,   \quad  t < T,  \  x > L^{1/4}_0\big/ 2, 
\end{multline}
$$v\big( L^{1/4}_0\big/ 2, t\big) = \pa w/\pa x\big(L^{1/4}_0\big/ 2, \ t), \ \  t < T;  
\quad v(x,T) = 0, \ \ x \ge L^{1/4}_0\big/ 2. $$	

Let $X(s)$ denote the diffusion corresponding to the equation (\ref{AA3}) and for $X(t) = x > L^{1/4}_0\big/ 2$, let $\tau_{x,t}$ be the first hitting time at $L^{1/4}_0\big/ 2$.  Then there is the inequality, 
\[
v(x,t) \le \frac C{L^{1/4}_0} \ E \left[ \exp \left\{ \int^{T\wedge \tau_{x,t}}_t \frac{ds}{3[X(s)^2L(s)]^{1/3}}\right\}\bigg| X(t)=x\right].
\]
In view of the fact that $T-t < \del L^{1/2}_0$ and $X(s) \ge L^{1/4}_0\big/ 2$, $t < s < T \wedge \tau_{x,t}$, we conclude that $v(x,t) \le C_1\big/ L^{1/4}_0$ for some universal constant $C_1$ if $x \ge L^{1/4}_0\big/ 2$. 
\end{proof}
\begin{lem}  Let $w(x,t)$ be the function (\ref{E3}) with $w_0(x) = x^{1/3}, x > 0$.  Then $0 \le \pa w(x,t) /\pa x\le C\big/(T-t)^{1/3}$, provided $0 < (T-t) < \del L^{1/2}_0$.
\end{lem}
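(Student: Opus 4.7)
My plan is to follow the two-region strategy of Lemma 3.3, adapting the estimates to accommodate the unbounded terminal data $w_0(x) = x^{1/3}$. The lower bound $\pa w/\pa x \ge 0$ would come from a pathwise coupling argument for the SDE (\ref{D3}): solutions starting at $x_1 < x_2$ stay ordered until the smaller one hits zero, so $\tau_{x_1,t} \le \tau_{x_2,t}$ and $X(T;x_1,t)^{1/3} \le X(T;x_2,t)^{1/3}$ on the event $\{\tau_{x_1,t} > T\}$, giving $w(x_1,t) \le w(x_2,t)$.

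For the upper bound in the region $0 \le x \le L_0^{1/4}/2$, I would decompose $w = w_1 + w_2$ on the interval $[0, L_0^{1/4}]$ exactly as in Lemma 3.3. The function $w_2$ has right boundary value $w(L_0^{1/4}, \cdot)$, which by Lemma 3.2 is bounded by $CL_0^{1/12}$; rerunning the Green's function perturbation of Lemma 3.3 (with boundary data $CL_0^{1/12}$ in place of $1$) yields $|\pa_x w_2| \le CL_0^{1/12}/L_0^{1/4} = CL_0^{-1/6}$, and the hypothesis $T-t \le \delta L_0^{1/2}$ converts this to $|\pa_x w_2| \le C(T-t)^{-1/3}$. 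For $w_1$, I would use the same perturbation expansion with leading term $v(x,t) = \int_0^{L_0^{1/4}} K_T(x,y,t) y^{1/3}\,dy$, and integrate by parts in $y$; since $K_T$ vanishes at both Dirichlet endpoints, $\pa_x v$ reduces to an integral of $K_T(x,y,t) y^{-2/3}$ plus correction terms from the $y$-dependence of the variance $\sigma^2(y) = (1+y)^{1/3}(T-t)$. Distinguishing two subregimes finishes the job: for $x \le \sqrt{T-t}$, a Taylor expansion yields $K_T(x,y,t) \approx xy\,\sigma(y)^{-2}G(y,\sigma(y)^2)$ and the rescaled Gaussian integral gives $|\pa_x v| \le C(T-t)^{-1/3}$; for $\sqrt{T-t} \le x \le L_0^{1/4}/2$, the Gaussian concentration of $K_T$ around $y \approx x$ yields $|\pa_x v| \le Cx^{-2/3} \le C(T-t)^{-1/3}$. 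Higher-order corrections $v_n$ are controlled by the same geometric bounds of type (\ref{T3})--(\ref{UU3}) as in Lemma 3.3.

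For $x \ge L_0^{1/4}/2$, I would apply the Feynman--Kac argument of Lemma 3.3 to $v = \pa_x w$, which solves (\ref{AA3}) with terminal data $v(x,T) = x^{-2/3}/3 \le CL_0^{-1/6}$ on $[L_0^{1/4}/2, \infty)$ and time-varying boundary data $v(L_0^{1/4}/2, t') \le C(T-t')^{-1/3}$ from the analysis above. The zero-order coefficient is $O(L_0^{-1/2})$, so the exponential Feynman--Kac weight remains $O(1)$ given $T-t \le \delta L_0^{1/2}$; the contribution from the terminal data is then bounded by $CL_0^{-1/6} \le C(T-t)^{-1/3}$. The contribution from the boundary takes the form of an $O(1)$ weight times $E[(T-\tau_{x,t})^{-1/3}; \tau_{x,t} < T]$, where $\tau_{x,t}$ is the first hitting time of $L_0^{1/4}/2$ from $x$; a direct analysis of the hitting-time distribution for (\ref{D3}) in Region II, modeled on the analogous bound for Brownian motion, produces an $x$-uniform bound of order $(T-t)^{-1/3}$.

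The main obstacle is this last hitting-time estimate: because $(T-t')^{-1/3}$ blows up as $t' \to T$, the maximum principle does not directly yield a finite bound, and one must quantify carefully how much mass the distribution of $\tau_{x,t}$ places near $t' = T$. This calls for an explicit analysis of the first-passage density in the spirit of the Ito--Chebyshev argument of Lemma 3.2, now applied to hitting the fixed boundary $L_0^{1/4}/2$ rather than to exit probabilities.
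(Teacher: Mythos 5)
Your treatment of the lower bound and of the inner region $0\le x\le L_0^{1/4}/2$ follows the paper's route (Lemma 3.3 rerun with the data $x^{1/3}$, using Lemma 3.2 for the boundary value at $L_0^{1/4}$), and those estimates are in order. The problem is the outer region. There you feed the Feynman--Kac representation for $v=\pa w/\pa x$ on $x\ge L_0^{1/4}/2$ with the boundary datum $v(L_0^{1/4}/2,t')\le C(T-t')^{-1/3}$, which blows up as $t'\to T$, and you are then forced to control $E[(T-\tau_{x,t})^{-1/3};\tau_{x,t}<T]$ --- a first-passage density estimate that you explicitly leave open and call the main obstacle. As written, the proof is therefore incomplete: the maximum principle does not close the argument with that boundary datum, and no bound on the hitting-time density near $t'=T$ is supplied.

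The obstacle is artificial, and your own inner-region estimates already contain the way around it, which is exactly what the paper does. At the specific point $x=L_0^{1/4}/2$ one has $\sqrt{T-t'}\le\sqrt{\delta}\,L_0^{1/4}\ll L_0^{1/4}/2$ for every $t'$ in the admissible range, so the relevant bound there is not $C(T-t')^{-1/3}$ but the time-uniform one you derived in the subregime $\sqrt{T-t}\le x\le L_0^{1/4}/2$: the leading term contributes $Cx^{-2/3}\le CL_0^{-1/6}$ and the $w_2$-piece contributes $CL_0^{-1/6}$, uniformly in $t'$ with $0<T-t'<\delta L_0^{1/2}$ (consistent with the terminal limit $\pa_x w_0(L_0^{1/4}/2)=\tfrac13(L_0^{1/4}/2)^{-2/3}$). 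With this boundary datum $\le C/L_0^{1/6}$ and terminal datum $\tfrac13 x^{-2/3}\le C/L_0^{1/6}$ on $x\ge L_0^{1/4}/2$, the Feynman--Kac weight for the zero-order term in (\ref{AA3}) is $O(1)$ because $T-t<\delta L_0^{1/2}$ and $X(s)\ge L_0^{1/4}/2$ up to the hitting time, and the maximum principle immediately gives $\pa w/\pa x\le C/L_0^{1/6}\le C(T-t)^{-1/3}$ in the outer region, with no analysis of the hitting-time distribution. If you replace your outer-region boundary datum by this uniform bound, your argument coincides with the paper's and closes.
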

\begin{proof} We proceed exactly as in Lemma 3.3 but this time using the results of Lemma 3.2.  Thus we first show that $\pa w(x,t)/\pa x \le C\big/(T-t)^{1/3}$ for $x \le L^{1/4}_0\big/ 2$ and $\pa w(x,t)/\pa x \le C\big/ L^{1/6}_0$ for $x=L^{1/4}_0\big/ 2$.  For $x > L^{1/4}_0\big/ 2$ we consider the terminal-boundary value problem (\ref{AA3}) but now with the terminal data given by $v(x,T) = 1/3x^{2/3}$.  Since $v(x,T) \le C_1\big/ L^{1/6}_0, x \ge L^{1/4}_0\big/ 2$, one concludes just as in Lemma 3.3 that $ \pa w(x,t)/\pa x \le C\big/ L^{1/6}_0, \ x \ge L^{1/4}_0\big/ 2$. 
\end{proof}
\begin{theorem} Suppose $c(x,0), \ x >0$, is a non negative function which satisfies,
\[	0 < \int^\infty_0 (1 + x) c(x,0)dx < \infty.	\]
Then there is a unique solution to (\ref{J1}) subject to the constraint,
\[	\int^\infty_0 \ xc(x,t)dx = \int^\infty_0 xc(x,0) dx.	\]
\end{theorem}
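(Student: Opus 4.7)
The plan is to adapt the fixed point argument of Section 2 to the continuous setting, with $L(\cdot)$ in (\ref{J1}) as the unknown. Set
\[
\mu_0:=\int_0^\infty x^{1/3}c(x,0)\,dx\Big/\int_0^\infty c(x,0)\,dx,
\]
and for $\del>0$ let $Y_\del$ be the complete metric space, under the uniform norm, of continuous $\mu:[0,\del]\to\R$ with $\mu(0)=\mu_0$ and $\mu_0/2\le\mu(t)\le 2\mu_0$; interpret $\mu(t)$ as a candidate value for $L(t)^{1/3}$. For each $\mu\in Y_\del$ I would solve the \emph{linear} Cauchy--Dirichlet problem (\ref{J1}) with $L(t)=\mu(t)^3$, Dirichlet condition $c(0,t)=0$, and initial data $c(\cdot,0)$, obtaining a unique nonnegative solution $c_\mu$. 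Define $T:Y_\del\to Y_\del$ by
\[
T\mu(t)=\int_0^\infty x^{1/3}c_\mu(x,t)\,dx\Big/\int_0^\infty c_\mu(x,t)\,dx,
\]
so that a fixed point of $T$ yields a self-consistent $L$. The main analytical tool is the duality (\ref{C3}): for a test function $w_0$ one has $\int_0^\infty w_0(x)c_\mu(x,t)\,dx=\int_0^\infty w_\mu(x,0;t)c(x,0)\,dx$, where $w_\mu(\cdot,\cdot;t)$ solves $\pa_sw=-{\mathcal L}_s^*w$ on $s\le t$ with terminal data $w_0$ and Dirichlet condition at $x=0$. Applied with $w_0\equiv 1$ and $w_0(x)=x^{1/3}$, Lemmas 3.1 and 3.2 give uniform two-sided bounds on $w_\mu$; combined with (\ref{K1}) and positivity of the initial data, these produce positive upper and lower bounds on $\int_0^\infty c_\mu(x,t)\,dx$ and $\int_0^\infty x^{1/3}c_\mu(x,t)\,dx$ on $[0,\del]$. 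Hence $T\mu$ is well defined and, for $\del$ small, maps $Y_\del$ into itself.

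The technical core is the contraction estimate. For $\mu,\mu'\in Y_\del$ I would set $\mu_\la=\la\mu+(1-\la)\mu'$ and, imitating Lemma 2.2, differentiate the linear problem in $\la$ to find that $u_\la:=\pa c_{\mu_\la}/\pa\la$ satisfies
\[
\pa_tu_\la-{\mathcal L}_tu_\la=\frac{\mu(t)-\mu'(t)}{\mu_\la(t)^2}\,\pa_x\!\big[x^{1/3}c_{\mu_\la}(x,t)\big],
\]
with zero initial and Dirichlet data. Duhamel's formula and the duality, after integration by parts in $x$, then yield
\[
\int_0^\infty w_0(x)u_\la(x,t)\,dx=-\int_0^t\!ds\,\frac{\mu(s)-\mu'(s)}{\mu_\la(s)^2}\int_0^\infty x^{1/3}c_{\mu_\la}(x,s)\,\pa_xw_{\mu_\la}(x,s;t)\,dx.
\]
Specializing to $w_0\equiv 1$ and $w_0(x)=x^{1/3}$ and invoking the derivative estimates of Lemmas 3.3 and 3.4 on $\pa_xw_{\mu_\la}$, one obtains $|T\mu(t)-T\mu'(t)|\le C\del^\al\|\mu-\mu'\|_\del$ on $[0,\del]$ for some $\al>0$ and constant $C$ depending only on $\mu_0$ and the initial data, so $T$ is a strict contraction for $\del$ small. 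Conservation of the first moment, $\int_0^\infty xc_\mu(x,t)\,dx=\int_0^\infty xc(x,0)\,dx$, at the fixed point follows by testing (\ref{J1}) against $x$: integration by parts kills the boundary contribution at $0$ by the Dirichlet condition and the contribution at infinity by a routine tail estimate based on the finite first moment, while the residual interior term $\int_0^\infty[1-(x/L(t))^{1/3}]c_\mu\,dx$ vanishes precisely because $\mu(t)^3=L(t)$ is the fixed-point identity.

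For global existence I would iterate the local construction. This requires a uniform positive lower bound on $L(t)$ so that Lemmas 3.1--3.4 can be applied with time steps bounded away from $0$ on each successive interval. Establishing this lower bound is the main obstacle of the proof and is the continuous analogue of Lemma 2.4: one argues that if $L(t_n)\to 0$ along some sequence then the strong inward drift $[(x/L)^{1/3}-1]$ in (\ref{J1}) would produce a rate of mass loss through $x=0$ incompatible with conservation of the first moment of $c_\mu$. Uniqueness on each short interval follows from uniqueness of the fixed point of $T$, and piecing these together yields global uniqueness.
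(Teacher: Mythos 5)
Your local existence argument is essentially the paper's own proof in different clothing: the paper also sets up a contraction mapping for the unknown mean radius (parameterized there as $c_1(T)=1/L(T)^{1/3}$ rather than your $\mu=L^{1/3}$), uses the duality (\ref{C3}) with test functions $w_0\equiv 1$ and $w_0(x)=x^{1/3}$, invokes Lemmas 3.1--3.2 for the invariance of the function space and Lemmas 3.3--3.4 for the contraction estimate via the interpolated derivative $u_\la$, and recovers the conservation law exactly as you do. One minor caveat on your set-up: the two-sided constraint $\mu_0/2\le\mu(t)\le 2\mu_0$ is not preserved merely by citing the two-sided bounds of Lemmas 3.1--3.2, since those bounds involve universal constants $C_1,C_2$ with $C_2/C_1$ possibly larger than $2$; you would need in addition a continuity-at-$t=0$ statement ($w_1\to 1$, $w_2\to x^{1/3}$ as $T\to 0$, uniformly over admissible $\mu$) to keep $T\mu$ inside the band. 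The paper avoids this by imposing only a one-sided bound ($\|c_1\|_\infty\le L_0^{-1/3}$, i.e.\ a lower bound on $L$), which is all the lemmas require.

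The genuine gap is in your global continuation. You correctly identify that one needs a positive lower bound on $L(t)$ on each successive interval, but the argument you sketch --- that $L(t_n)\to 0$ would force a rate of mass loss through $x=0$ incompatible with conservation of the first moment --- does not work as stated. Conservation of $\int xc\,dx$ by itself is compatible with $L(t)\to 0$: the first moment can be carried by an arbitrarily small amount of number density at large $x$ while $\int c\,dx$ concentrates near $x=0$; moreover the inward drift near the origin has magnitude at most $1$, so there is no ``strong'' flux mechanism to quote without quantitative boundary-layer estimates. What actually rules out degeneration of $L$ is already contained in the machinery you cite: the barrier bounds of Lemmas 3.1--3.2 give, via (\ref{AD3}), the estimate (\ref{AE3}), $Ac_1(T)\le C/T^{1/6}$, i.e.\ $L(T)\gtrsim T^{1/2}$ on the local interval, uniformly in the initial lower bound. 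This is how the paper continues the solution: at the end of the first interval the bound on $1/L^{1/3}$ has improved (the paper notes $Ac_1(T_0)\sim 1/L_1^{2/15}<1/L_1^{1/3}$), so the local construction can be iterated indefinitely with non-shrinking time steps. Your proof needs this quantitative step (or an equivalent) in place of the soft contradiction argument; without it the global claim is unsupported.
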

\begin{proof} Just as in $\S 2$ we define a mapping on functions $c_1(T) = 1/L(T)^{1/3}$ where $L(T)$ is defined by (\ref{A3}).  In particular, $c_1(0)$ is determined by the initial data, where $c_1(0) = 1\big/L^{1/3}_1$ for some $L_1 > 0$.  Suppose now we are given $c_1(T), \ 0 \le T \le T_0$, with $c_1(0) = 1/L^{1/3}_1$.  Then we solve (\ref{J1}) with the corresponding function $L(T), \; 0 \le T \le T_0$,  and define a new function $Ac_1(T), 0 \le T \le T_0$, by the RHS of (\ref{A3}).  We define a space $X$ of functions $c_1 : [0, T_0] \ra (0,\infty)$ which are continuous and satisfy $\|c_1\|_\infty \le 1\big /L^{1/3}_0$ for some $L_0 > 0$.  We shall show that the mapping $A$ leaves $X$ invariant provided $L_0 < L_1$ is sufficiently small and $T_0 = \del L^{1/2}_0$ for some universal $\del, \ 0 < \del < 1$.  To see this we write
\begin{eqnarray} \label{AD3}
Ac_1(T) &=& \int^\infty_0  c(x,T)dx \  \Big/ \  \int^\infty_0 x^{1/3} c(x,T) dx \\
&=& \int^\infty_0 w_1(x,0) c(x,0)dx \ \Big/ \  \int^\infty_0 w_2(x,0) c(x,0) dx, \nn
\end{eqnarray}
where $w_1(x,t), \ t < T$, is given by (\ref{E3}) with $w_0 \equiv 1$, and $w_2(x,t), \ t < T$, is given by (\ref{E3}) with $w_0(x) = x^{1/3}, \ x > 0$.  In view of Lemma 3.1 and 3.2 we have then that
\be \label{AB3}
Ac_1(T) \le \left[ \frac{C_2}{\sqrt{T}} \int^{\sqrt{T}}_0 x\; c(x,0)dx + \int_{\sqrt{T}}^\infty  c(x,0)dx \right] \Big/
\ee
\[	\left[ \frac{C_1}{T^{1/3}} \int^{\sqrt{T}}_0 x\; c(x,0)dx + C_1 \int_{\sqrt{T}}^\infty x^{1/3} c(x,0)dx \right] , \]
provided $T < \del L^{1/2}_0$, for some universal $\del > 0$.  Observe now that
\begin{eqnarray*}
\int^\infty_0 x^{1/3} c(x,0)dx &\le& \frac{L^{1/3}_1}{2} \int^{L_1/8}_0 c(x,0)dx + \int_{L_1/8}^\infty x^{1/3}c(x,0)dx \\
&\le& \frac 1 2 \; \int^\infty_0 x^{1/3} c(x,0)dx + \int_{L_1/8}^\infty x^{1/3}c(x,0)dx,
\end{eqnarray*}
whence we conclude that
\be \label{AH3}
\int_{L_1/8}^\infty x^{1/3}c(x,0)dx \ge \frac 1 2 \; \int^\infty_0 x^{1/3} c(x,0)dx .
\ee
It follows then from (\ref{AB3}) that
\[	Ac_1(T) \le 2 \max [C_2, 1] \big/ C_1 L^{1/3}_1, \ T < L^2_1 \big/ 64. \]
One also has from (\ref{AB3}) that
\be \label{AE3}
Ac_1(T) \le \big [C_2/C_1 + 1/C_1] \big/ T^{1/6}, \ 0 < T < \del L^{1/2}_0.	
\ee
It is clear now that there is a universal constant $\eta > 0$ such that if we choose $L_0 = \eta \min[L_1, 1]$ then $A$ leaves $X$ invariant.  Next we show that for $T_0$ sufficiently small the mapping $A$ is a contraction.  To see this we proceed as in $\S 2$.  Thus if $c_1,c'_1 \in X$ then
\begin{eqnarray*}
Ac_1(T) &-& Ac'_1(T) = \int^1_0 d\la \frac \pa{\pa \la} \; A\big[ \la c_1+(1-\la)c'_1\big](T) \\
&=& \int^1_0 d\la \Big\{ \int^\infty_0 x^{1/3} c_\la(x,T)dx \int^\infty_0 \frac {\pa c_\la}{\pa \la} (x,T) dx \\
&-& \int^\infty_0 c_\la(x,T)dx \int^\infty_0 x^{1/3} \frac {\pa c_\la}{\pa \la} (x,T) dx \Big\}\bigg/ \Big[ \int^\infty_0 x^{1/3} c_\la(x,T)dx\Big]^2,
\end{eqnarray*}
where $c_\la$ is the solution to (\ref{J1}) with $1/L(t)^{1/3} = \la c_1(t) + (1-\la)c'_1(t)$.  Hence $u_\la = \pa c_\la/\pa \la$ is the solution to the initial-boundary value problem,
\[	\frac{\pa u_\la}{\pa t} = {\mathcal L}_t u_\la - [c_1(t) - c'_1(t) ] \frac \pa{\pa x} \big\{ x^{1/3} c_\la\big\}, \ \ t > 0,  \]
\[	\ \ \ \ \ u_\la(x,0) = 0, \ x > 0, \ \ \ \ \ \ u_\la(0,t) = 0, \ \ t > 0.	\]
It follows that 
\begin{eqnarray} \label{AC3}
\int^\infty_0 u_\la(x,T)dx &=& \int^T_0 [c_1(t) - c'_1(t) ]dt \int^\infty_0 dx \frac{\pa w_1}{\pa x} (x,t)x^{1/3}  c_\la(x,t), \\
\int^\infty_0 x^{1/3} u_\la(x,T)dx &=& \int^T_0 [c_1(t) - c'_1(t) ]dt \int^\infty_0 dx \frac{\pa w_2}{\pa x} (x,t)x^{1/3} c_\la(x,t), \nn
\end{eqnarray}
where the functions $w_1, w_2$ are as in (\ref{AD3}).  Hence from Lemma 3.3 there is a universal constant $C$ such that
\be \label{AF3}
\bigg| \int^\infty_0 u_\la(x,T)dx \; \bigg/ \; \int^\infty_0 x^{1/3} c_\la(x,T)dx\bigg| \le C\ga(T)\sqrt{T}  \|c_1-c'_1\|_\infty, \ 0< T < \del L^{1/2}_0,
\ee
where $\ga(T)$ is defined by 
\[	\ga(T) = \sup_{0< t < T} \int^\infty_0 x^{1/3} c_\la(x,t)dx \Big/ \int^\infty_0 x^{1/3} c_\la(x,T)dx .  \]
If we use the second equation in (\ref{AC3}) and (\ref{AE3}) we also conclude there is a universal constant $C$ such that
\begin{multline}  \label{AG3}
\bigg| \int^\infty_0 c_\la(x,T)dx  \int^\infty_0 x^{1/3} u_\la(x,T)dx \Big/ \left[  \int^\infty_0 x^{1/3} c_\la(x,T)dx \right]^2 \bigg| \\
 \le C \ga(T) \sqrt{T}  \|c_1-c'_1\|_\infty, \quad  0< T < \del L^{1/2}_0.
\end{multline}
Using (\ref{AH3}) and Lemma 3.2 we see that there is a universal constant $C$ such that
\[   \ga(T) \le C \max \left[ \Big( T/L^2_1\Big)^{1/3}, \ 1 \right], \ \ 0 < T < \del L^{1/2}_0.	\]
It follows now from (\ref{AF3}), (\ref{AG3}) that if we take $T_0 = \nu \min \big[L^{1/2}_0, \; L^{4/5}_1\big] $ for some universal constant $\nu > 0$ then the mapping $A$ is a contraction.  Hence we obtain a unique solution to the diffusive LSW problem up to time $T_0$.  Observe also that for sufficiently small $L_1$ one has that $Ac_1(T_0) \sim 1/L^{2/15}_1 < 1/L^{1/3}_1$, whence one obtains global existence in time. \end{proof}
We show here that the Kohn-Otto argument \cite{ko} may be applied to prove time averaged coarsening for the diffusive LSW model (\ref{F1}), (\ref{J1}).   Letting $c_\ve(x,t)$ be the solution to the diffusive LSW system   (\ref{F1}), (\ref{J1}), we denote by $E_\ve(t)$ the energy
\be \label {AZ3} 
E_\ve(t)=\int_0^\infty x^{2/3} c_\ve(x,t) dx \ .
\ee
From the conservation law (\ref{F1}), the quantity $1/E_\ve(T)^3$ is a measure of the average cluster volume at time $T$. 
\begin{theorem} Let $\rho-\rho_{\rm crit}$ in (\ref{F1}) be normalized to $1$, and $c_0(x), \ x\ge 0$, be a non-negative function which satisfies
\be \label{AY3}
\int_0^\infty (1+x^{4/3} )c_0(x) dx =M_0<\infty.
\ee
Suppose further that $c_\ve(x,t)$ is the solution of the diffusive LSW system (\ref{F1}), (\ref{J1}) with initial data $c_\ve(x,0)=c_0(x), \ x\ge 0$. Then for $0<\ve\le 1$, there are universal constants $K_1, \ K_2$ such that 
\be \label{AX3}
\left[\frac{1}{T}\int_0^TE_\ve(t)^2 dt\right]^{-3/2}\le K_1T,
\ee
provided $T\ge K_2  M_0^3$.
\end{theorem}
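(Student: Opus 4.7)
The plan is to adapt the Kohn-Otto method as implemented in \cite{dp} for the classical LSW system. I use the length functional
\[ L_\ve(t) := \int_0^\infty x^{4/3} c_\ve(x,t)\, dx, \]
for which (\ref{AY3}) gives $L_\ve(0) + N_\ve(0) \le M_0$ with $N_\ve(t) = \int c_\ve\, dx$. The interpolation comes from Cauchy-Schwarz and the conservation law $\int x c_\ve\, dx = 1$:
\[ 1 = \int x^{2/3}\cdot x^{1/3} c_\ve\, dx \le \sqrt{L_\ve(t)\, E_\ve(t)}, \]
so $L_\ve(t)\, E_\ve(t) \ge 1$ for every $t \ge 0$.

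Differentiating $E_\ve$ via (\ref{J1}), integrating by parts (boundary terms vanish because $c_\ve(0,t)=0$), and applying the Lagrange identity to express the drift contribution as a manifestly non-negative double integral yields
\[ -\dot E_\ve = \frac{2}{9}\int x^{-4/3} D_\ve c_\ve\, dx + \frac{1}{3\mu_\ve}\int\!\int x^{-1/3} y^{-1/3}(x^{1/3} - y^{1/3})^2 c_\ve(x) c_\ve(y)\, dx\, dy, \]
where $D_\ve(x) = \ve(1+x/\ve)^{1/3}$ and $\mu_\ve = \int x^{1/3} c_\ve\, dx$. Thus $E_\ve$ is non-increasing, and Cauchy-Schwarz gives $E_\ve(0) \le \sqrt{N_\ve(0) L_\ve(0)} \le M_0$.

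The core step is to establish a differential inequality $L_\ve^2 \dot L_\ve \le K$ uniformly in $\ve \in (0,1]$. Computing in the same way,
\[ \dot L_\ve = \frac{4}{9}\int x^{-2/3} D_\ve c_\ve\, dx + \frac{2}{3\mu_\ve}\int\!\int(x^{1/3} - y^{1/3})^2 c_\ve(x) c_\ve(y)\, dx\, dy . \]
The drift (double-integral) part is treated by the Dai-Pego weighted Cauchy-Schwarz: splitting the integrand using weights $x^{\pm 1/6} y^{\pm 1/6}$ bounds it by the geometric mean of the Lagrange expression $\int\!\int x^{1/3} y^{1/3}(x^{1/3}-y^{1/3})^2 c_\ve c_\ve = 2(\mu_\ve - E_\ve^2)$ and the non-diffusive component of $-\dot E_\ve$ multiplied by $3\mu_\ve$; then the mass conservation $\int x c_\ve = 1$ and the interpolation $L_\ve E_\ve \ge 1$ are used to close the estimate at the $L_\ve^2 \dot L_\ve$ level. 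The diffusive contribution $\int x^{-2/3} D_\ve c_\ve$ — not present in \cite{dp} — is controlled using the uniform pointwise bound $D_\ve(x) = \ve^{2/3}(\ve+x)^{1/3} \le (1+x)^{1/3}$ valid for $\ve \in (0,1]$, together with $\int x^{-2/3} D_\ve c_\ve \le \sqrt{\int x^{-4/3} D_\ve c_\ve}\sqrt{\int D_\ve c_\ve}$: the first factor is absorbed into $-\dot E_\ve$, and the second is bounded by $N_\ve + \mu_\ve \le N_\ve + N_\ve^{2/3} \le C M_0$ via the monotonicity $N_\ve(t) \le N_\ve(0) \le M_0$ and Jensen's inequality $\mu_\ve \le N_\ve^{2/3}$.

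Once $L_\ve^2 \dot L_\ve \le K$ is in hand, integrating $(L_\ve^3)' \le 3K$ gives $L_\ve(T)^3 \le M_0^3 + 3KT$; for $T \ge K_2 M_0^3$ with $K_2 = 1/(3K)$, this forces $L_\ve(T) \le (6KT)^{1/3}$, and the interpolation then yields $E_\ve(T)^2 \ge (6KT)^{-2/3}$. Since $E_\ve$ is non-increasing, $\frac{1}{T}\int_0^T E_\ve(t)^2\, dt \ge E_\ve(T)^2$, which rearranges to $\left[\frac{1}{T}\int_0^T E_\ve^2\, dt\right]^{-3/2} \le 6KT = K_1 T$. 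The main obstacle I expect is the differential inequality $L_\ve^2 \dot L_\ve \le K$ itself: the Dai-Pego weighted Cauchy-Schwarz must be arranged so as to close at exactly the correct power of $L_\ve$, and the diffusion correction must be controlled uniformly in $\ve \in (0,1]$ by balancing the energy dissipation against the $M_0$-bounded low-order moments.
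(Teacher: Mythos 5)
Your computations of $\dot E_\ve$ and of the time derivative of the $x^{4/3}$--moment, the interpolation $E_\ve\cdot\int x^{4/3}c_\ve\,dx\ge 1$, and the Cauchy--Schwarz treatment of the drift and diffusive parts are all sound and essentially reproduce the paper's (\ref{AW3})--(\ref{AI3}) and (\ref{AS3}). The genuine gap is your core step: the pointwise differential inequality $L_\ve^2\dot L_\ve\le K$ does not follow from the manipulations you describe, and in fact cannot. What those manipulations give is only $\dot L_\ve\lesssim\sqrt{-\dot E_\ve}$ (your weighted Cauchy--Schwarz with weights $(xy)^{\pm1/6}$ reduces, after using $\mu_\ve-E_\ve^2\le\mu_\ve$, to exactly the paper's bound (\ref{AJ3}), and your diffusive estimate is an (\ref{AI3})--type bound), i.e.\ the Kohn--Otto inequality $|\dot L_\ve|^2\le K|\dot E_\ve|$ of (\ref{AT3}). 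The instantaneous dissipation $-\dot E_\ve$ is not pointwise controlled by $M_0$ or by negative powers of $L_\ve$, so one cannot ``close at the $L_\ve^2\dot L_\ve$ level'' using only the static facts you invoke ($\int xc_\ve=1$, $N_\ve\le M_0$, $\mu_\ve\le N_\ve^{2/3}$, $E_\ve L_\ve\ge1$). Concretely, for a (smoothed) two-bump profile $c=(1-s)\delta_1+(s/b)\delta_b$ with $s=\tfrac12$ and $b$ large, one has $\int xc=1$, $N\le1$, $L\approx\tfrac12 b^{1/3}$, while the drift part of $\dot L$ equals $\tfrac43(EN-\mu^2)/\mu\approx\tfrac13 b^{-1/3}$, so that
\begin{equation*}
L^2\,\dot L\ \gtrsim\ b^{1/3}\ \longrightarrow\ \infty ,
\end{equation*}
even though all the quantities you use to close the estimate stay of order one (only the dissipation is large, consistent with $|\dot L|^2\le K|\dot E|$). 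Since nothing prevents the solution at positive times from resembling such a configuration with the far bump well beyond the initial scale, no constant $K$ --- universal or $M_0$-dependent --- can be extracted this way, and consequently your pointwise conclusion $E_\ve(T)^2\ge(6KT)^{-2/3}$ is also unjustified: dissipation-based arguments of Kohn--Otto type deliver only time-averaged lower bounds on the energy, which is precisely why (\ref{AX3}) is stated as a time average.

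The repair is short and is the paper's route, for which you already have every ingredient: keep $|\dot L_\ve|^2\le K|\dot E_\ve|$ (drift part by (\ref{AJ3}), diffusive part by the splitting argument giving (\ref{AI3}), uniformly for $0<\ve\le1$), together with $E_\ve L_\ve\ge1$ and the monotonicity of $E_\ve$, and then apply the ODE lemma of Kohn--Otto type (Lemma 2 of \cite{pego}, as in \cite{dp,ko}); that lemma converts the pair of inequalities directly into (\ref{AX3}) for $T\ge K_2M_0^3$, with no pointwise-in-time lower bound on $E_\ve$ needed.
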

\begin{proof} We first show that $E_\ve(t)$ is decreasing. In fact we have from (\ref{J1}) and (\ref{A3}) that
\begin{multline} \label{AW3}
\frac{3}{2}\frac{dE}{dt} = - \frac{\ve}{3} \int^\infty_0 x^{-4/3} (1+x/\ve)^{1/3} c_\ve(x,t) dx - \int^\infty_0 x^{-1/3} c_\ve(x,t)dx \\
+ \left[ \int^\infty_0  c_\ve(x,t)dx \right]^2 \ \Big/ \ \int^\infty_0 x^{1/3} c_\ve(x,t)dx < 0. 
\end{multline}
Next define a length scale $M_\ve(t)$ by
\be \label{AV3}
M_\ve(t)=\int_0^\infty x^{4/3} c_\ve(x,t) dx,
\ee
so that $M_\ve(0)\le M_0$. It follows again from (\ref{J1}) that
\be \label{AU3}
\frac{3}{4}\frac{dM_\ve}{dt} =  \frac{\ve}{3} \int^\infty_0 x^{-2/3} (1+x/\ve)^{1/3} c_\ve(x,t) dx + \int^\infty_0 x^{1/3} \left[ \left\{ \frac x{L_\ve(t)}\right\}^{1/3} - 1 \right] c_\ve(x,t)dx,
\ee
where $L_\ve(t)$ is given by (\ref{A3}) with $c_\ve(\cdot,t)$ in place of $c(\cdot,t)$.
Now just as in \cite{dp} we have from the Schwarz inequality,
\begin{multline} \label{AJ3}
 \left[ \int^\infty_0 x^{1/3} \left[ \left\{ \frac x{L_\ve(t)}\right\}^{1/3} - 1 \right]c_\ve(x,t)dx\right]^2 
\le \int^\infty_0 xc_\ve(x,t)dx \int^\infty_0 x^{-1/3} \left[ \left\{ \frac x{L_\ve(t)}\right\}^{1/3} - 1 \right]^2 c_\ve(x,t)dx \\
 =    \int^\infty_0 x^{-1/3} c_\ve(x,t)dx-\left[ \int^\infty_0  c_\ve(x,t)dx \right]^2 \ \Big/ \ \int^\infty_0 x^{1/3} c_\ve(x,t)dx,
\end{multline}
where we have used (\ref{F1}), (\ref{A3}).  

We show that for $0<\ve\le1$ there is a constant $C>0$ such that
\be \label{AI3}
\left[ \ve \int^\infty_0 x^{-2/3}(1+x/\ve)^{1/3} c_\ve(x,t)dx \right]^2 \le C  \ve\int^\infty_0 x^{-4/3}(1+x/\ve)^{1/3} c_\ve(x,t)dx.  
\ee
To see this observe that the LHS is bounded above by
\begin{eqnarray*}
&\ & 2 \left[ \ve\int^1_0 x^{-2/3}(1+x/\ve)^{1/3} c_\ve(x,t)dx \right]^2 + 2 \left[\ve \int^\infty_1 x^{-2/3}(1+x/\ve)^{1/3} c_\ve(x,t)dx \right]^2 \\
&\le& 2 \left[\ve^2 \int^1_0 x^{-4/3}(1+x/\ve)^{2/3} c_\ve(x,t)dx \right] \ \left[ \int^1_0 c_\ve(x,t)dx \right] \\
&+& 2 \left[\ve^2 \int^\infty_1 x^{-7/3}(1+x/\ve)^{2/3} c_\ve(x,t)dx \right] \ \left[ \int^\infty_1 x\,c_\ve(x,t)dx \right] \\
&\le& 2^{4/3} \ve^{2/3}\left[ \ve\int^\infty_0 x^{-4/3}(1+x/\ve)^{1/3} c_\ve(x,t)dx \right] \ \left[ \int^\infty_0 (1+x)c_\ve(x,t)dx \right] \\
&\le& 2^{4/3} \left[\ve \int^\infty_0 x^{-4/3}(1+x/\ve)^{1/3} c_\ve(x,t)dx \right] \ \left[ \int^\infty_0 (1+x)c_\ve(x,0)dx \right].
\end{eqnarray*}
We conclude  then from (\ref{AW3})-(\ref{AI3}) that
\be \label{AT3}
|dM_\ve/dt|^2 \le K_3 \ |dE_\ve/dt| \ ,
\ee
for some universal constant $K_3$, provided $0<\ve\le 1$. It is easy to see from the Schwarz inequality that
\be \label{AS3}
E_\ve(t)  M_\ve(t) \ge  1, \ \ \ \ \ t > 0.
\ee
The result follows from (\ref{AT3}), (\ref{AS3}) and Lemma 2 of \cite{pego}.
\end{proof}

\section{The zero diffusion limit}
Our goal in this section will be to establish the limits (\ref{L1}) under the assumption (\ref{K1}) on the initial data for the diffusive LSW system (\ref{F1}), (\ref{J1}). First let us recall how the method of characteristics can be applied to give a representation for the solution of (\ref{E1}). For any $x,t\ge 0$ let $x(s), \ 0\le s\le t$, be the solution of the terminal value problem,
\be \label{A4}
\frac{dx}{ds}=-\left[1-\left(\frac{x(s)}{L(s)}\right)^{1/3}\right],\quad x(t)=x.
\ee
We set $F(x,t)=x(0)$, whence $F(\cdot,t)$ is a mapping on the non-negative reals with derivative
\be \label{R4}
\frac{\pa F(x,t)}{\pa x}= \exp\left[-\frac{1}{3}\int_0^t \frac{ds}{\{x(s)^2L(s)\}^{1/3}}\right] \ .
\ee
 The solution $c_0(x,t), \ x,t\ge 0$, of (\ref{E1}) is given in terms of the initial data $c_0(x), \ x\ge 0$,  by the formula,
\be \label{B4}
\int^\infty_x \; c_0(x', t)dx' = \int^\infty_{F(x,t)} \; c_0(x')dx', \quad x,t\ge 0.
\ee
\begin{lem}  Let $L(\cd)$ be a continuous function on the interval $[0, T]$ satisfying $\inf L(\cd) > 0$ and $F(x,t),  \ x \ge 0, \  0 \le t \le T$, be the corresponding mapping derived from (\ref{A4}).  Suppose $c_0(\cd)$ is a nonnegative function on $(0, \infty)$ satisfying $\int^\infty_0 \; c_0(x)dx < \infty$, and $c_\ve(x,t), \ L_\ve(t)$ is the solution of (\ref{J1}) with initial data $c_0(\cd)$ and Dirichlet boundary condition $c_\ve(0,t) = 0$.  Then if the functions $L_\ve(\cd)$  converge uniformly to the function $L(\cd)$ on the interval $[0,T]$ as $\ve \ra 0$, one has
\[
\lim_{\ve \ra 0} \int^\infty_x \; c_\ve(x', T)dx' = \int^\infty_{F(x,T)} \; c_0(x')dx', \ \ x \ge 0.
\]
\end{lem}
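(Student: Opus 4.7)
The plan is to use the duality identity (\ref{C3}), $\int_0^\infty w(x,T) c_\varepsilon(x,T)\,dx = \int_0^\infty w(x,0) c_0(x)\,dx$, where $w = w_\varepsilon$ solves the backward adjoint problem $\pa w/\pa t + {\mathcal L}_t^* w = 0$ on $0<t<T$ (with $L$ replaced by $L_\varepsilon$), with Dirichlet condition $w(0,t)=0$ and terminal data $w_0$ chosen to approximate the indicator $\chi_{[x,\infty)}$. For bounded continuous $w_0$ the Feynman--Kac representation (\ref{E3}) gives $w_\varepsilon(y,0) = E[w_0(X_\varepsilon(T))\,;\,\tau_{y,0}>T]$, where $X_\varepsilon(\cdot;y)$ is the diffusion driven by (\ref{D3}) with $L(s)$ replaced by $L_\varepsilon(s)$ and diffusion coefficient $\sqrt{2\varepsilon(1+X_\varepsilon(s)/\varepsilon)^{1/3}}$. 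If we can show that $w_\varepsilon(y,0) \ra \chi_{[F(x,T),\infty)}(y)$ for almost every $y>0$ as $\varepsilon\ra 0$, then dominated convergence (using $0\le w_\varepsilon\le 1$ and $c_0\in L^1$) together with (\ref{B4}) delivers the claimed limit.

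The pointwise convergence of $w_\varepsilon(\cdot,0)$ is established by a standard small-noise SDE argument. Let $\Phi_s(y)$ denote the solution of the forward characteristic ODE $d\Phi/ds = -[1-(\Phi/L(s))^{1/3}]$ with $\Phi_0 = y$, so that $F(\cdot,T)$ is the inverse of $\Phi_T$ and $\Phi_T(y)>x \Leftrightarrow y>F(x,T)$ by monotonicity of the flow. Under the hypotheses $L_\varepsilon\ra L$ uniformly and $\inf L>0$, the drift of $X_\varepsilon$ converges uniformly on compact subsets of $(0,\infty)\times[0,T]$ to the deterministic drift, while the diffusion coefficient satisfies $2\varepsilon(1+X/\varepsilon)^{1/3}\ra 0$ uniformly on any bounded $X$-interval. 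A Gronwall estimate on $|X_\varepsilon(s;y)-\Phi_s(y)|^2$ combined with a Bernstein-type martingale inequality then shows that $\sup_{0\le s\le T}|X_\varepsilon(s;y)-\Phi_s(y)|\ra 0$ in probability. When $y>F(0,T)$ the deterministic curve $\Phi_\cdot(y)$ stays bounded away from $0$ on $[0,T]$, hence $P(\tau_{y,0}\le T)\ra 0$ and $w_\varepsilon(y,0)\ra w_0(\Phi_T(y))$; when $y<F(0,T)$ the deterministic trajectory reaches $0$ before time $T$, and the same comparison yields $P(\tau_{y,0}\le T)\ra 1$, so $w_\varepsilon(y,0)\ra 0$.

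To pass from bounded continuous $w_0$ to the discontinuous $\chi_{[x,\infty)}$, sandwich $\chi_{[x,\infty)}$ between continuous approximations $\phi_\delta^\pm$ agreeing with it outside $[x-\delta,x+\delta]$, apply the previous step with each $\phi_\delta^\pm$ in place of $w_0$, and let $\delta\ra 0$ after $\varepsilon\ra 0$. Since $c_0\in L^1$ and $\{y:F(y,T)=x\}=\{F(x,T)\}$ is a single point (by strict monotonicity of the flow), the upper and lower sandwich integrals share the common limit $\int_{F(x,T)}^\infty c_0(y)\,dy$, which by (\ref{B4}) equals $\int_x^\infty c_0(x',T)\,dx'$.

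The main obstacle is the small-noise hitting-time analysis near the absorbing boundary $x=0$. One needs quantitatively that trajectories of $X_\varepsilon$ starting above $F(0,T)$ do not reach $0$ by time $T$ with probability tending to one, and conversely for those starting below. The $\varepsilon$-dependence of the noise coefficient $\sqrt{2\varepsilon(1+X/\varepsilon)^{1/3}}$ is inconvenient because it does not vanish uniformly in $X$; however, on any bounded interval $[0,M]$ the bound $\sqrt{2\varepsilon(1+M/\varepsilon)^{1/3}} = O(\varepsilon^{1/3})$ is uniform, and since $c_0\in L^1$ the region $X\ge M$ contributes arbitrarily little to $\int w_\varepsilon(y,0) c_0(y)\,dy$ when $M$ is chosen large. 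This reduces the boundary analysis to a one-dimensional small-noise comparison on a bounded interval, which can be handled by the Gronwall/martingale argument indicated above, completing the proof.
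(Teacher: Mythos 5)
Your proposal follows essentially the same route as the paper's proof: the duality identity with the adjoint (Feynman--Kac) solution, small-noise convergence of the diffusion (\ref{E4}) to the characteristics (\ref{A4}) via Gronwall plus martingale estimates up to a stopping time keeping the process away from $0$, and a separate absorption analysis near the boundary, so the outline is correct. The only notable differences are technical: you sandwich the indicator terminal data by continuous functions and invoke dominated convergence, and you sketch the boundary-hitting step by the same Gronwall/martingale comparison, whereas the paper treats the indicator directly and carries out the absorption analysis with explicit scale-function and exit-time moment computations for a frozen-coefficient comparison process -- variants of the same idea.
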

\begin {proof} As in $\S3$ we use the solution to the adjoint problem.  Thus let ${\mathcal L}^*_{t,\ve}$ be the operator,
\be \label{C4}
{\mathcal L}^*_{t,\ve} = \ve\left(1 + x/\ve\right)^{1/3} \frac{\pa^2}{\pa x^2}-\left[ 1 - \left\{ \frac x{L_\ve(t)}\right\}^{1/3} \right] 
\frac{\pa}{\pa x},
\ee
and $w_\ve$ satisfy $\pa w_\ve\big/\pa t = -{\mathcal L}^*_{t,\ve} w_\ve, \ t < T$, $w_\ve(x,T) = w_0(x), \ w_\ve(0,t)=0$.  Then
\be \label{U4}
\int^\infty_0 w_0(x) c_\ve(x,T)dx = \int^\infty_0 w_\ve(x,0) c_0(x)dx.	
\ee
For any $x_0 \ge 0$ we take $w_0(x) = 1, \  x \ge x_0$, $w_0(x) = 0, \ x < x_0$.  Evidently $w_\ve(x,0) \le 1, \ x\ge 0$.  Hence to prove the result we need to show that
\begin{eqnarray} \label{D4}
\lim_{\ve \ra 0} w_\ve(x,0) &=& 1, \ \ x > F(x_0, T), \\
&=& 0, \ \ x < F(x_0, T). \nn
\end{eqnarray}
Recall now that $w_\ve(x,0) = P\big( X_\ve(T) > x_0;  \ \tau_{x,0} > T | X_\ve(0) = x\big) $, where $X_\ve(s)$ is the solution to the stochastic equation,
\be \label{E4}
dX_\ve(s) = - \left[ 1 - \left\{ \frac {X_\ve(s)} {L_\ve(s)} \right\}^{1/3} \right]ds + \sqrt{2\ve} \Big(1 + X_\ve(s)/\ve \Big)^{1/6} dW(s),
\ee
and $\tau_{x,t}$ is the first hitting time at the boundary for the process started at $x$ at time $t$.  We first show that
\be \label{F4}
\lim_{\ve \ra 0} P\left( \tau_{x,0} > T | X_\ve(0) = x \right) = 0, \ x < F(0,T).
\ee
By our assumptions there exists $\ve_0 > 0$ and $L_0 > 0$ such that for $\ve \le \ve_0,  \ \inf L_\ve(\cd) \ge L_0$.  Hence if $Y_\ve(s)$ is the solution to the equation,
\be \label{G4}
dY_\ve(s) = - \left[ 1 - \left\{ \frac {Y_\ve(s)}{L_0}\right\}^{1/3} \right]ds + \sqrt{2\ve} \Big(1 + Y_\ve(s)/\ve\Big)^{1/6} dW(s),
\ee
then there is the inequality,
\be \label{H4}
P\Big(\tau_{x,t} > T | X_\ve(t) = x \Big) \le P\Big(\tau_{x,t} > T | Y_\ve(t) = x \Big), \ t < T.
\ee
Now let $u_\ve(x)$ be the probability that the process $Y_\ve$ started at $x < \del$ exits the interval $[0,\del]$ through the boundary 0.  Then one has
\[   u_\ve(x) = \int^\del_x \exp \left[ - \int^z_0 h_\ve(z')dz' \right]dz \ \Big/\int_0^\del \exp \left[ - \int^z_0 h_\ve(z') dz'\right]dz,  \]
where
\be \label{I4}
h_\ve(z) = \left[ \Big(z/L_0\Big)^{1/3} - 1 \right] \ \big/ \ \ve \left[ 1 + z/\ve \right]^{1/3}.
\ee
Choosing $\del = L_0/2$, it is easy to see that on any interval $[0,\del ']$ with $\del' < \del$ the functions $u_\ve$ converge uniformly to 1 as $\ve \ra 0$.  Next for $0 < x < \del$ let $v_\ve(x)$ be defined by $v_\ve(x) = E\big[\tau_x | Y_\ve(0) = x\big]$, where $\tau_x$ is the first exit time from the interval $[0,\del]$.  Then $v_\ve$ satisfies the equation,
\be \label{J4}
-\ve\Big(1 + x/\ve\Big)^{1/3} v''_\ve(x) + \Big[1 - (x/L_0)^{1/3}\Big] v'_\ve(x) = 1, \  \ 0 < x < \del,
\ee
with Dirichlet  boundary condition $v_\ve(0) = v_\ve(\del) = 0$.  It follows from the maximum principle that
\[   v_\ve(x) \le x \; \big/ \; \Big[1 - (\del/L_0)^{1/3}\Big] , \ 0 < x < \del .  \]
We conclude therefore that 
\be \label{K4}
P \big( Y_\ve(s) \ {\rm hits \ 0 \ before \ time} \  t \  \big| Y_\ve(0) = x \Big)
\ee
\[	\ge \ \ u_\ve(x) - P\Big( \tau_x > t \  \big| Y_\ve(0) = x \Big)  \]
\[ \ge \ \ u_\ve(x) - x/t \Big[1 - (\del/L_0)^{1/3}\Big].  \]
Note that if $x < < t$ then the RHS of (\ref{K4}) is positive as $\ve \ra 0$.  Now on setting $\ve = 0$ in (\ref{J4}) we see that the time for the classical system to exit the interval $[0,\del]$ is $T_x$, where
\[	T_x = \int^x_0 dz \; \big/ \; \Big[1 - (z/L_0)^{1/3}\Big].	\]
Hence we should have that 
\be \label{L4}
\lim_{\ve \ra 0} P\left( \tau_x > T_x + \eta | Y_\ve(0) = x \right) = 0,  
\ee
for any $\eta > 0$.  In order to prove (\ref{L4}) we first show that $\lim_{\ve \ra 0} v_\ve(x) = T_x$.  From (\ref{J4}) we have that
\[   v'_\ve(x) = A_\ve \exp \left[-\int^x_0 h_\ve(z)dz \right] - \int^x_0 \frac{\exp \left[-\int^x_z h_\ve(z')dz' \right]dz}{\ve(1+z/\ve)^{1/3}} \ ,
\]
where $A_\ve$ is given by the formula
\[
A_\ve = \int_{0< z< x' < \del} \frac{\exp \left[-\int^{x'}_z h_\ve(z')dz' \right]}{\ve(1+z/\ve)^{1/3}} dz dx' \bigg/ \int^\del_0 \exp \left[-\int^{x'}_0 h_\ve(z)dz \right]dx'.  \]
Letting $g(z) = 1 - (z/L_0)^{1/3}$ we have that
\[
- \int^x_0 \frac{\exp \left[-\int^x_z h_\ve(z')dz' \right]}{\ve(1+z/\ve)^{1/3}}dz =  \int^x_0 \frac{h_\ve(z)}{g(z)}\exp \left[-\int^x_z h_\ve(z')dz' \right]dz  \]
\[   = \frac 1{g(x)} - \frac 1{g(0)} \exp \left[-\int^x_0 h_\ve(z')dz'\right] + \int^x_0 \frac{g'(z)}{g(z)^2} 
\exp \left[-\int^x_z h_\ve(z')dz' \right]dz.  \]
Similarly we have that 
\[
A_\ve = \frac 1{g(0)} - \int^\del_0 \frac{dx'}{g(x')} \ \Big/ \ \int^\del_0 \exp \left[-\int^{x'}_0 h_\ve(z')dz' \right]dx'  \]
\[
-\int_{0< z< x' < \del} \frac{g'(z)}{g(z)^2} \exp \left[-\int^{x'}_z h_\ve(z')dz' \right] dz dx' \bigg/ \int^\del_0 \exp \left[-\int^{x'}_0 h_\ve(z')dz' \right]dx'.  \]
We conclude that
\begin{eqnarray*}
v'_\ve(x) &=& \frac 1{g(x)} -  \exp \left[-\int^x_0 h_\ve(z)dz\right] \int^\del_0 \frac{dx'}{g(x')} \ \Big/ \int_0^\del \exp \left[-\int^{x'}_0 h_\ve(z)dz\right]dx' \\
&+& \int^\del_0 \ \frac{g'(z)} {g(z)^2} \exp \left[-\int^x_z h_\ve(z')dz' \right] \Big\{ H(x-z) \\
&-& \int^\del_z \exp \left[-\int^{x'}_0 h_\ve(z')dz' \right] dx' \bigg/ \int^\del_0 \exp \left[-\int^{x'}_0 h_\ve(z')dz' \right]dx'\Big\}dz,
\end{eqnarray*}
where $H(y)$ is the Heaviside function, $H(y) = 1, \ y > 0$, $H(y) = 0, \ y < 0$.  It follows easily from the previous expression that $\lim_{\ve \ra 0} v'_\ve(x) = 1/g(x)$ uniformly in any interval $[0,\del ']$ with $\del' < \del$.  Hence $\lim_{\ve \ra 0} v_\ve(x) = T_x$ for $0 \le x < \del$.

We can in a similar way estimate $w_\ve(x) = E\big[\tau^2_x | Y_\ve(0) = x\big]$.  In fact $w_\ve$ satisfies the equation,
\be \label{M4}
-\ve(1+x/\ve)^{1/3} w''_\ve(x) + \left[ 1 - (x/L_0)^{1/3} \right]w'_\ve(x) = 2v_\ve(x), \ \ 0 < x< \del,
\ee
with Dirichlet boundary condition $w_\ve(0) = w_\ve(\del) = 0$.
Proceeding as in the previous paragraph we see that $\lim_{\ve \ra 0}w'_\ve(x) = 2T_x/g(x)$ uniformly in any interval $[0, \del']$ with $\del' < \del$.  It follows that $\lim_{\ve \ra 0}w_\ve(x) = T^2_x$ for $0 < x < \del$.  Finally we conclude from the Chebyshev inequality that (\ref{L4}) holds.

From (\ref{H4}) and (\ref{L4}) we see that for $x$ satisfying $0 < x < L_0/2$ then (\ref{F4}) holds provided $T > T_x$.  We show now that it holds for all $x < F(0,T)$.  To do this let $\bar \tau_{x,t}$ be the first hitting time for the process $X_\ve(s)$ with $X_\ve(t) = x > L_0/4$ to hit $L_0/4$.  Putting $\bar\tau= \tau_{x,0}$ we see as in Lemma 3.2 that
\begin{eqnarray*}
E\left[X_\ve\big(t' \wedge \bar \tau\big)^2\right] &=& x^2 + 2 E\left[ \int^{t'\wedge\bar\tau}_0 X_\ve(s)\left[ \left\{ \frac{X_\ve(s)}{L_\ve(s)} \right\}^{1/3} - 1\right] ds \right] \\
&+& 2 \ve E \left[ \int^{t' \wedge\bar\tau}_0 \big( 1 + X_\ve(s) / \ve\big)^{1/3} ds \right].  
\end{eqnarray*}
It follows that there is a constant $C(L_0)$ depending only on $L_0$ such that
\[   E \left[X_\ve\big(t' \wedge \bar \tau\big)^2\right] \le x^2 + C(L_0) \ E \left[ \int^{t' \wedge\bar\tau}_0 X_\ve(s\wedge\bar\tau)^2ds\right], \quad 0 \le t' \le T.   \]
Gronwall's inequality therefore yields,
\be \label{N4}
 E \left[X_\ve\big(t' \wedge \bar \tau\big)^2\right] \le x^2 \exp\left[ C(L_0) t' \right], \ \ 0 \le t' \le T.
\ee
Next, applying Ito's lemma to $\left[X_\ve(t') - X_0(t')\right]^2$ we see from (\ref{E4}) that
\begin{eqnarray*}
\left[X_\ve(t') - X_0(t')\right]^2 &=& 2 \int^{t'}_0 \left[X_\ve(s) - X_0(s)\right] \left[ \left\{ \frac{X_\ve(s)}{L_\ve(s)}\right\}^{1/3} - \left\{ \frac{X_0(s)}{L_0(s)}\right\}^{1/3} \right]ds \\
&+& 2\sqrt{2\ve} \int^{t'}_0 \left[X_\ve(s) - X_0(s)\right] \left[ 1 + X_\ve(s)/\ve\right]^{1/6} dW(s) + 2\ve  \int^{t'}_0  \left[ 1 + X_\ve(s)/\ve\right]^{1/3} ds.
\end{eqnarray*}
With $\bar \tau$ as in (\ref{N4}) we then have that
\begin{multline}  \label{O4}
E \left\{ \left[ X_\ve(t' \wedge \bar \tau) - X_0(t' \wedge \bar \tau)\right]^2\right\} \le C(L_0) \bigg\{ \left[ \sup_{0\le s\le T} |L_\ve(s) - L_0(s)| + \ve^{2/3}\right] 
x^2 \exp \big[ C(L_0) t' \big]  \\
+ \int^{t'\wedge\bar\tau}_0 E\left\{ \left[X_\ve (s \wedge \bar \tau) - X_0 ( s\wedge \bar \tau) \right]^2 ds \right\},  \quad 
0 \le t' \le T \wedge T_x,		
\end{multline}
for a constant $C(L_0)$ depending only on $L_0$, where $T_x$ is defined by $X_0(T_x)=0$.  Again from the Ito lemma we have that 
\be \label{P4}
X_\ve(t') - X_0(t') =  \int^{t'}_0 \left\{ \frac{X_\ve(s)}{L_\ve(s)}\right\}^{1/3} - \left\{ \frac{X_0(s)}{L_0(s)}\right\}^{1/3} \;ds
+ \sqrt{2\ve} \int^{t'}_0  \left[ 1 + X_\ve(s)/\ve\right]^{1/6} dW(s).
\ee
Now by the Kolmogorov inequality we have that for $\eta > 0$,
\[
P\left( \sup_{0\le t'\le T} \bigg| \sqrt{2\ve} \int^{t' \wedge \bar \tau}_0 \left[ 1 + X_\ve(s)/\ve\right]^{1/6} dW(s) \bigg| > \eta/2\right)   
\le \ \frac{8\ve}{\eta^2} \ E \left[ \int^{T \wedge\bar\tau}_0 \left[ 1 + X_\ve(s)/\ve\right]^{1/3} \; ds \right].  
 \]
We also have that 
\begin{multline*}
P\left( \sup_{0\le t'\le T \wedge T_x} \Big| \int^{t'\wedge \bar \tau}_0 \left\{ \frac{X_\ve(s)}{L_\ve(s)}\right\}^{1/3} - \left\{ \frac{X_0(s)}{L_\ve(s)}\right\}^{1/3} \;ds\Big| > \eta/4 \right)   \\
\le \frac{C(L_0)}{\eta} \sqrt{T} \ E \left[ \int^{T \wedge T_x \wedge \bar\tau}_0 \Big\{ X_\ve(s) - X_0(s)\Big\}^2 ds \right]^{1/2}, 
\end{multline*}
for a constant $C(L_0)$ depending only on $L_0$.  It follows then from (\ref{N4}), (\ref{O4}), (\ref{P4}) that for any $\eta > 0$,
\be \label{Q4}
\lim_{\ve \ra 0} \ P\left( \sup_{0\le t'\le T \wedge T_x \wedge \bar \tau} \Big| X_\ve(t') - X_0(t') \Big| > \eta \right) = 0.
\ee

We finish the proof of (\ref{F4}).  For $x < F(0,T)$ note that $T_x < T$.  We take $\eta$ in (\ref{Q4}) so that $\eta < L_0/4$ and consider the event $\big\{ | X_\ve(T_x \wedge \bar \tau) - X_0(T_x \wedge \bar \tau)|<\eta \big\}$.  We evidently must have $\bar \tau < T_x$ for this event.  Since then $|L_0/4 - X_0(\bar \tau)| < \eta$ the stopping time $\bar \tau$ must be close to the time required for the classical trajectory to go from $x$ to $L_0/4$.  Now by starting the diffusion $X_\ve$ at time $t = \bar \tau$ and using (\ref{H4}), (\ref{L4}) we conclude that (\ref{F4}) holds.

To complete the proof of the lemma we need to show (\ref{D4}).  For $x > F(0,T)$ one has $T_x > T$ whence the event $\big\{ | X_\ve(T_x \wedge \bar \tau) - X_0(T_x \wedge \bar \tau) | < \eta \big\} $ has high probability.  If $\bar \tau > T$ then $\tau_{x,0} > T$ so we consider the situation $\bar\tau<T$. Since $|L_0/4 - X_0(\bar \tau)| < \eta$ in this case, the stopping time $\bar \tau$ must again be close to the time required for the classical trajectory to go from $x$ to $L_0/4$.  Using our previous argument we can estimate the time taken for the diffusion $X_\ve$ started at time $t = \bar \tau$ at $L_0/4$ to hit 0.  For $\ve$ small this is close to the time for the classical trajectory, whence $\tau_{x,0} > T$ with high probability.  We conclude that
\[	\lim_{\ve \ra 0} P\big( \tau_{x,0} > T | X_\ve(0) = x \big) = 1, \ \ x > F(0,T).		\]
This completes the proof of (\ref{D4}) for $x_0 = 0$.  The argument for $x_0 > 0$ is similar. 
\end{proof}
\begin{lem}  Let $c_\ve(x,t), \ L_\ve(t), \ t\ge 0$, be a solution of the system (\ref{F1}), (\ref{J1}) with initial data $c_0(x)$ satisfying $c_0(x) \ge 0, \ x \ge 0$, $\int^\infty_0 (1+x)c_0(x)dx < \infty$, $\int^\infty_0 x c_0(x)dx = 1$.  Then for any $\ve_0, \ T>0$ the set of functions $L_\ve(\cd), 0 < \ve \le \ve_0$, on the 
interval $[0,T]$ is an equicontinuous family.
\end{lem}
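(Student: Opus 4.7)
Write $L_\ve(t)^{1/3}=M_\ve(t)/N_\ve(t)$, with $M_\ve(t)=\int_0^\infty x^{1/3}c_\ve(x,t)\,dx$ and $N_\ve(t)=\int_0^\infty c_\ve(x,t)\,dx$. The elementary inequality
\[
|L_\ve(t)^{1/3}-L_\ve(s)^{1/3}|\le\frac{|M_\ve(t)-M_\ve(s)|}{N_\ve(s)}+\frac{M_\ve(t)\,|N_\ve(t)-N_\ve(s)|}{N_\ve(t)\,N_\ve(s)}
\]
reduces uniform equicontinuity of $\{L_\ve\}_{0<\ve\le\ve_0}$ on $[0,T]$ to uniform equicontinuity of $\{M_\ve\}$ and $\{N_\ve\}$ together with a uniform positive lower bound on $N_\ve$.

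For the equicontinuity of $M_\ve$, I would test (\ref{J1}) against smooth cutoffs $\phi_n$ of $x^{1/3}$ vanishing at the origin---take $\phi_n(x)=x^{1/3}$ for $x\ge1/n$, smoothly tapered to $0$ on $[0,1/(2n)]$---and integrate by parts using the Dirichlet condition to obtain
\[
\int\phi_n\,c_\ve(\cdot,t)\,dx-\int\phi_n\,c_\ve(\cdot,s)\,dx=\int_s^t\!\int_0^\infty\bigl[\ve(1+x/\ve)^{1/3}\phi_n''-(1-(x/L_\ve(u))^{1/3})\phi_n'\bigr]c_\ve\,dx\,du.
\]
With an $\ve$-uniform upper bound on $L_\ve$ (see below) and the uniformly bounded mass integrals $\int c_\ve$, $\int x^{1/3}c_\ve$, the right side is $C(n)|t-s|$ with $C(n)$ independent of $\ve$. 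Since $|M_\ve(t)-\int\phi_n\,c_\ve(\cdot,t)|\le\int_0^{1/n}x^{1/3}c_\ve\le n^{-1/3}\int c_0$, choosing $n$ first and $|t-s|$ second yields equicontinuity. The same strategy handles $N_\ve$ via cutoffs approximating $1$ that still vanish at $x=0$: the remainder $\int_0^{1/n}c_\ve(x,t)\,dx$ is uniformly small because the Dirichlet condition forces $c_\ve$ to be $O(x)$ outside the boundary layer of width $\sqrt{\ve t}$, while the boundary flux $\int_s^t\ve\,\pa_xc_\ve(0,u)\,du=N_\ve(s)-N_\ve(t)$ is $O(\sqrt{\ve(t-s)})$ near $t=0$ and Lipschitz on $[t_0,T]$ for every $t_0>0$, by the perturbation-theory estimates in Lemmas 3.1 and 3.3.

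The main obstacle is securing the uniform upper bound on $L_\ve$ and positive lower bound on $N_\ve$ on $[0,T]$. The upper bound $L_\ve\le L_{\max}(T)$ should follow by iterating the fixed-point scheme of Theorem 3.1: the contraction estimates (\ref{AB3})--(\ref{AE3}) are expressed with universal constants, so the iteration extends to $[0,T]$ uniformly in $\ve$. For the lower bound on $N_\ve$, I would use the survival-probability representation $N_\ve(T)=\int_0^\infty P(\tau_{x,0}>T)\,c_0(x)\,dx$ coming from (\ref{E3})--(\ref{E4}): the descent rate of $X_\ve$ is at most $1$ (since $(X_\ve/L_\ve)^{1/3}\ge 0$) and the noise coefficient is $O(\ve^{1/3}(1+X_\ve)^{1/6})$, so for $x\ge 2T$ we have $P(\tau_{x,0}>T)\ge\tfrac12$ uniformly in $\ve\le\ve_0$, yielding $N_\ve(T)\ge\tfrac12\int_{2T}^\infty c_0(x)\,dx$. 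If the initial data has no mass beyond $2T$, one first evolves over a short initial interval $[0,t_1]$ during which (\ref{AE3}) drives $L_\ve$ upward, and then applies the survival argument to $[t_1,T]$ with $c_\ve(\cdot,t_1)$ in place of $c_0$; the same iteration simultaneously supplies the $L_{\max}(T)$ bound used in the previous paragraph.
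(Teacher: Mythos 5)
Your reduction $L_\ve^{1/3}=M_\ve/N_\ve$ and the cutoff argument for $M_\ve$ are reasonable (with one slip: the drift term $(x/L_\ve(u))^{1/3}\phi_n'$ requires a uniform positive \emph{lower} bound on $L_\ve$, not an upper bound), but the proposal defers or asserts precisely the two points that constitute the actual content of the lemma. First, the uniform-in-$\ve$ a priori bounds. Your claim that $P(\tau_{x,0}>T)\ge\tfrac12$ for $x\ge 2T$ uniformly in $0<\ve\le\ve_0$ is not justified: the martingale/Chebyshev estimate (\ref{Y4}) yields this only for $\ve\le\ve_T$ small, or for $x\ge x_T$ with $x_T$ possibly much larger than $2T$; in the intermediate regime $\ve_T<\ve<\ve_0$, $2T<x<x_T$ the noise is not small and one needs the separate chaining/perturbation argument giving (\ref{AA4}) merely to obtain a positive constant $\ga(T,\ve_0)$. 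Without that, you have no lower bound on $N_\ve$ when $\ve_0$ is not small. The lower bound on $L_\ve$ additionally needs a lower bound on the tail mass $\int_a^\infty c_\ve(x,t)dx$ at later times, which comes from the same survival estimates (cf.\ (\ref{AC4})--(\ref{AD4}) and the bootstrap (\ref{BB4})--(\ref{BC4})); it cannot be obtained by ``iterating the fixed-point scheme of Theorem 3.1,'' which is proved for $\ve=1$ and whose constants are not shown to be $\ve$-uniform.

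Second, and more seriously, the equicontinuity of $N_\ve$ is exactly the equicontinuity of $\La_\ve^{-1}$, i.e.\ the heart of the lemma, and your justification for it is incorrect. With initial data only in $L^1$ with finite first moment, $N_\ve$ is not Lipschitz on $[t_0,T]$, not even for $\ve=0$: there $N_0(t)-N_0(t+\del)=\int_{F(0,t)}^{F(0,t+\del)}c_0(x)dx$, whose smallness as $\del\ra 0$ comes from uniform integrability of $c_0$, not from any bound on the boundary flux; similarly ``the Dirichlet condition forces $c_\ve=O(x)$ outside a boundary layer'' carries no uniform constant, since $c_\ve$ near $x=0$ at time $t$ is fed by whatever initial mass arrives at the origin around time $t$, which can be arbitrarily concentrated. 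Lemmas 3.1 and 3.3 concern the adjoint problem on short time scales $T-t\lesssim\del L_0^{1/2}$ and do not supply what you need. The missing step is the argument (\ref{AE4})--(\ref{AQ4}): write $N_\ve(T)-N_\ve(T+\del)=\int_0^\infty w_\ve(x,0)c_0(x)dx$ with terminal data the absorption probability during $[T,T+\del]$, show by comparing $X_\ve$ with the deterministic characteristic (treating $\ve<\del^{1/4}$ and $\del^{1/4}\le\ve<\ve_0$ separately) that $w_\ve(\cdot,0)$ is small outside an interval of length $O(\del^{1/24})$ around $a_\ve$, and then invoke $c_0\in L^1$; an analogous uniform-in-$a$ estimate is needed for the tail integrals entering $L_\ve$ (cf.\ (\ref{AW4})--(\ref{BA4})). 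Without a substitute for this machinery, the proposal does not prove the lemma.
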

\begin{proof} Let us first consider the classical LSW system (\ref{E1}), (\ref{F1}) and let $\Lambda(t), \ t\ge 0$, be the mean particle volume as in (\ref{M1}). Then one has by Jensen's inequality that $L(t) \le \La(t)$ and it is also easy to see that $\La(t)$ is an increasing function.  For $x\ge 0$ let $w_0(x)$ be defined by
$$w_0(x)=\int_x^\infty c_0(x')dx'. $$
From (\ref{B4}) we have that $\La(t) = 1/w_0(F(0,t))$ and one can further see that $F(0,t) < t$.  Noting that $\La(0)$ is given by
\[	\La(0) = \int^\infty_0 x c_0(x)dx \bigg/ \int^\infty_0 c_0(x)dx,	\]
we see that
\[	\int^\infty_{\La(0)/2} x c_0(x)dx \ge  \frac 1 2 \; \int^\infty_0 x c_0(x)dx   = \frac 1 2 \ ,  \]
whence $w_0\big(\La(0)/2\big) > 0$.  Hence $\La(\cd)$ is bounded above in the interval $[0, \La(0)/4]$, and a-fortiori $L(\cd)$ is bounded above in $[0, \La(0)/4]$.  Next we obtain a lower bound on $L(\cd)$ in this interval.  To do this we use the formula (\ref{A3}) for $L(t)$ which, if we denote by $w(x,t)$ the LHS of (\ref{B4}), is the same as
$$ L(t)^{1/3}=\frac{1}{3}\int_0^\infty x^{-2/3} w(x,t) dx \Big/ w(0,t). $$
 Since $F(x,t) < x + t$ we have that
\begin{eqnarray*}
\frac 1 3 \int^\infty_0 x^{-2/3} w(x,t)dx &\ge& \frac 1 3 \int^\infty_0 x^{-2/3} w_0(x + t)dx \\
&\ge& a^{1/3} \ w_0(a+t), \ \ a > 0, \ \ t > 0.
\end{eqnarray*}
We have then that
\[	L(t)^{1/3} \ge \La(t)a^{1/3} w_0(a+t), \ \ t > 0, \ \ a > 0.	\]
If in this last inequality we set $a = \La(0)/4$ we see that $L(\cd)$ is bounded below on the interval $[0, \La(0)/4]$ by a positive constant.  This argument can be easily extended, using the fact that $\La(\cd)$ is an increasing function, to conclude that $\La(\cd)$ is bounded above on any interval $[0, T]$ and $L(\cd)$ is bounded below by a positive constant on the interval.

The boundedness below of $L(\cd)$ on $[0,T]$ implies the continuity of the functions $\La(\cd)$, $L(\cd)$.  To see this note that for $\del > 0$,
\[  0 < \La(t)^{-1} - \La(t + \del)^{-1} < w_0 \left( F(0,t) \right) - w_0 \left( F(\del, t) \right).	\]
In view of (\ref{R4}) we have that $0 < F(\del, t) - F(0,t) < \del$.  The continuity of $\La(\cd)$ follows now from the continuity of $w_0(\cd)$.  To show continuity of $L(\cd)$ we note that since
\[     \int^\infty_0 w_0 \left( F(x, t) \right)dx = 1 , \]
there exists for any $\eta > 0$ an $x_\eta > 0$ such that
\be\label{S4}
\int^\infty_{x_\eta} \ x^{-2/3} w_0 \left( F(x, t) \right)dx < \eta.	
\ee
We consider the integral
\begin{multline} \label{T4}
\int^{2x_\eta}_0 \ x^{-2/3} \left[ w_0 \left( F(x, t) \right) - w_0 \left( F(x, t + \del) \right)\right] dx = \\
  \int^{2x_\eta}_0 \ x^{-2/3} \left[ w_0 \left( F(x, t) \right) - w_0 \left( F(g(x, t,\del),t )\right)\right] dx , 
 \end{multline}
where
\[	|g(x,t,\del) - x| \le \del \left[ 1 + (x/L_0)^{1/3} \right].	\]
Using the continuity of $w_0(\cd)$ and (\ref{R4}) we conclude that $\del$ can be chosen sufficiently small that the integral in (\ref{T4}) is bounded in absolute value by $\eta$.  Hence from (\ref{S4}) we have that
\[  \Big| \int^\infty_0 \ x^{-2/3} \left[ w_0 \left( F(x, t) \right) - w_0 \left( F(x, t + \del) \right)\right] dx \Big| < 3\eta. \]
This last inequality and the continuity of $\La(\cd)$ implies the continuity of $L(\cd)$.

Next we wish to apply the previous argument to the system (\ref{F1}), (\ref{J1}). Let $\La_\ve(t)$ be as in (\ref{M1}), whence it follows that $\La_\ve(t)$ is an increasing function of $t$ and $\La_\ve(0) = \La(0)$.  From (\ref{U4}) we have that
\be \label{AB4}
	\La_\ve(T)^{-1} = \int^\infty_0 w_\ve(x,0)c_0(x) dx, 
\ee
where $w_\ve(x,T) = 1$ and $w_\ve$ is a solution to the adjoint equation as in Lemma 4.1.  It is easy to see from (\ref{E4}) that
\[	w_\ve(x,0) \ge P\left( \tau_{x,0} > T| X_\ve(0) = x \right),	\]
where $X_\ve(s)$ is the solution to the stochastic equation,
\be \label{V4}
dX_\ve(s) = -ds + \sqrt{2\ve} \left(1 + X_\ve(s)/\ve\right)^{1/6} dW(s).
\ee
We shall show that there is a positive constant $\ga(T, \ve_0)$ depending only on $T$ and $\ve_0$ such that 
\be \label{W4}
P\left( \tau_{x,0} > T| X_\ve(0) = x \right) \ge \ga(T, \ve_0),  \ \ x > 2T,
\ee
provided $0 < \ve \le \ve_0$.  To see this we argue as in Lemma 4.1.  Thus from Ito's lemma applied to (\ref{V4}) we have that
$$
E\left[ X_\ve(t \wedge \tau_{x,0})^2\right] = x^2 - 2E \left[ \int_0^{t \wedge \tau_{x,0}} X_\ve(s)ds \right] 
+ 2\ve E \left[ \int_0^{t \wedge \tau_{x,0}} \Big[ 1 + X_\ve(s)/\ve \Big]^{1/3} ds \right] .
$$
On using the inequality,
\[	(1+z)^{1/3} \le 1+z^{1/3} \le 1+\ve^{-1/3} + \ve^{5/3} \; z^2,	\]
we see then that 
\[	E\left[ X_\ve(t \wedge \tau_{x,0})^2\right] \le x^2 + 2t(\ve + \ve^{2/3}) + 2\ve^{2/3}E \left[ \int_0^{t \wedge \tau_{x,0}} X_\ve(s)^2ds \right]  \ . \] 
It follows from the Gronwall inequality that
\be \label{X4}
E\left[ X_\ve(t \wedge \tau_{x,0})^2\right] \le \Big[ x^2 + 2t(\ve + \ve^{2/3})\Big] \exp \left[ 2\ve^{2/3} \; t\right].
\ee
We also have from (\ref{V4}) that
\[  X_\ve(t \wedge \tau_{x,0}) - x + t = \sqrt{2\ve} \int^{t\wedge \tau_{x,0}}_0 \left[1 + X_\ve(s)/\ve\right]^{1/6} dW(s),
\]
whence one has that
\[ \eta^2 \; P\left( \sup_{0\le t\le T} |X_\ve(t \wedge \tau_{x,0}) - x + t |> \eta\right) \le 2\ve E\left[ \int^{T\wedge \tau_{x,0}}_0 \Big [1 + X_\ve(s)/\ve \Big]^{1/3} ds \right]. 
\]
Using now (\ref{X4}) and the H\"{o}lder inequality we conclude that
\be \label{Y4}
\eta^2 \; P\left( \sup_{0\le t\le T} | X_\ve(t \wedge \tau_{x,0}) - x + t| > \eta\right) \le 
4T \ve^{2/3} \left\{ \ve^2 + \Big[x^2 + 2T (\ve + \ve^{2/3})\big] \exp \left[ 2\ve^{2/3} \; T \right] \right\}^{1/6}. 
\ee
Choosing $\eta = x/2$ in (\ref{Y4}) we see that there is an $x_T \ge 2T$ such that the RHS of (\ref{Y4}) divided by $\eta^2$ does not exceed 1/2 provided $x \ge x_T, \ 0 < \ve \le \ve_0$.  Furthermore there exists $\ve_T > 0$ such that we may take $x_T = 2T$ if $\ve$ is in the region $0 < \ve \le \ve_T$.  We conclude that 
\begin{eqnarray} \label{Z4}
P \left( \tau_{x,0} > T | X_\ve(0) = x \right) &\ge& 1/2, \ x > 2T, \ 0 < \ve \le \ve_T, \\
&\ge& 1/2, \ x > x_T, \ 0 < \ve \le \ve_0. \nn
\end{eqnarray}
To complete the proof of (\ref{W4}) then we need to show that
\be \label{AA4}
P\left( \tau_{x,0} > T | X_\ve(0) = x \right) \ge \ga(T,\ve_0), \ 2T < x < x_T, \ \ve_T < \ve < \ve_0.
\ee
This can be demonstrated using perturbation theory.  Choose an integer $N$ and points $x_j =2 jT/N, j=0,1,...,N+1$.  Now define $p_j, \ j=1,..., N$ by 
\begin{multline*}
p_j = \inf_{\ve_T < \ve < \ve_0} \Big\{ {\rm Probability} \ \ X_\ve(t) \ \ {\rm with} \ \ X_\ve(0) = x_j \\
{\rm exits \ the \ interval} \ \ \big[x_{j-1}, x_{j+1}\big] \ \ {\rm in \ time \ greater \ than} \ \ T/N \big\}  \ . 
\end{multline*}
Then the LHS of (\ref{AA4}) is bounded below by $p_1 \cdots p_N$.  To estimate the $p_j$ we choose $N$ large enough so that the Dirichlet Green's function for the interval $[x_{j-1}, x_{j+1}]$ can be expanded in a converging perturbation series as in Lemma 3.1.  The integer $N$ can be chosen dependent only on $T$ because $\ve_T < \ve < \ve_0$.  One can thus show that the solution of the diffusion equation corresponding to (\ref{V4}) with initial data 1 and Dirichlet boundary conditions is bounded below by a constant depending only on $T$ at $x=x_j, \ t = T/N$.  Hence we obtain a lower bound on $p_j$ and thus have established (\ref{AA4}).

We have now from (\ref{AB4}), (\ref{W4}) that
\be \label{AC4}
\La_\ve(T) \le 1 \bigg/ \ga(T,\ve_0) \int^\infty_{2T} c_0(x)dx,
\ee
whence $\La_\ve(T)$ is uniformly bounded above for $0 < \ve \le \ve_0$ provided $T \le \La(0)/4$.  Arguing as before we can obtain a uniform lower bound on $L_\ve(t)$, \ $0 \le t \le \La(0)/4$.  To do this we write
\[	\int^\infty_0 x^{1/3} \; c_\ve(x,t)dx = \frac 1 3 \int^\infty_0 x^{-2/3} \; \int^\infty_x c_\ve(x',t)dx' ,  \]
whence for any $a > 0$,
\[	L_\ve(t)^{1/3} \ge \La_\ve(t) a^{1/3}  \int^\infty_a c_\ve(x',t)dx' .  \]
Now in the same way as we obtained (\ref{AC4}) we see from this that there is a positive constant $K\big( \La(0) \big)$ depending only on $\La(0)$ such that
\be \label{AD4}
L_\ve(t)^{1/3} \ge K\big( \La(0) \big) \int^\infty_{\La(0)/2} c_0(x)dx, \ 0 \le t \le \La(0)/4, \ 0 < \ve \le \ve_0.
\ee

We have shown that the functions $\La_\ve(\cd)$, $L_\ve(\cd), 0 < \ve \le \ve_0$, are uniformly bounded above and away from zero in the interval $0 \le t \le \La(0)/4$.  Next we show that they are equicontinuous.  To do this we use a formula analogous to (\ref{AB4}).  Thus for $\del > 0$ we write
\be \label{AE4}
\La_\ve(T) - \La_\ve(T+\del)^{-1} = \int^\infty_0 w_\ve(x,0) c_0(x)dx,	
\ee
where $w_\ve(x,t)$ is a solution to the adjoint equation as in Lemma 4.1 with terminal data specified at $t=T$.  The terminal data is given by
\[	w_\ve(x,T) = 1 - P\left( X_\ve(s) > 0, \ T < s < T+\del | X_\ve(T) = x \right),  \]
where $X_\ve(s)$ satisfies the stochastic equation (\ref{E4}).  It is easy to see that
\be  \label{AF4}
	w_\ve(x,T) \le 1 - P\left( X_\ve(s) > 0, \ 0 < s < \del | X_\ve(0) = x \right),  
\ee
where $X_\ve(s)$ satisfies the stochastic equation (\ref{V4}).  We can use (\ref{Y4}) then to estimate the RHS of (\ref{AF4}).  Thus on setting $T=\del$ and $\eta =\sqrt{\del} + x/2$ in (\ref{Y4}) we see from (\ref{AF4}) that $w_\ve(x,T)$ satisfies the inequalities,
\begin{eqnarray} \label{AG4}
w_\ve(x,T) &\le& 1, \ \ \ 0 < x < 4\sqrt{\del}, \\
w_\ve(x,T) &\le& C(\ve_0) \del\big[1 + x^{1/3}\big] \big/ x^2,  \  x >4\sqrt{\del}, \nn
\end{eqnarray}
where $C(\ve_0)$ is a constant depending only on $\ve_0$.

We shall show that if $w_\ve(x,T)$ satisfies (\ref{AG4}) then the RHS of (\ref{AE4}) is small for small $\del$, uniformly in $\ve, \ 0 < \ve \le \ve_0$.  To see this we first consider the case where $0 < \ve < \del^{1/4}$.  Let $X_\ve(s)$ satisfy (\ref{E4}) and $Y_\ve(s)$ the corresponding deterministic equation,
\be \label{AH4}
dY_\ve(s) = - \left[ 1 - \left\{ \frac{Y_\ve(s)}{L_\ve(s)}\right\}^{1/3} \right] ds, \ \ 0 < s < T.
\ee
We obtain an estimate on the size of the difference $X_\ve(T) - Y_\ve(T)$ when $X_\ve(0) = Y_\ve(0) = x$, which is similar to (\ref{Y4}).  Letting $\tau_x$ be the first exit time from the interval $(0, \infty)$ for the process $X_\ve(s)$ started at $x$ at $s=0$ we have as in the derivation of (\ref{X4}) that
$$
E\left[ X_\ve(t \wedge \tau_{x,0})^2\right] \le x^2 + 2t(\ve + \ve^{2/3}) + 2t/L^{1/3}_0 
+ \big(2\ve^{2/3} + 2/L^{1/3}_0\big) E \left[ \int^{t\wedge \tau_x}_0 X_\ve(s)^2 ds \right],
$$
where $L_0$ is a lower bound on $L_\ve(\cd)$.  We conclude that
\be \label{AI4}
E\left[ X_\ve(t \wedge \tau_{x,0})^2\right] \le \big[ x^2 + 2t(\ve + \ve^{2/3} + 1/L^{1/3}_0 )\big] 
\exp \big[ 2( \ve^{2/3} + 1/L^{1/3}_0 ) t \big] .
\ee
From (\ref{E4}), (\ref{AH4}) we have that
\begin{multline}  \label{AJ4}
E \left\{ \left[ X_\ve(t \wedge \tau_x) - Y_\ve(t \wedge \tau_x)\right]^2 \right\} = 2E \left\{ \int^{t\wedge \tau_x}_0 \big[X_\ve(s) - Y_\ve(s)\big] \big[ X_\ve(s)^{1/3} - Y_\ve(s)^{1/3} \big] \bigg/ L_\ve(s)^{1/3} ds\right\}   \\
 + 2\ve E \left\{ \int^{t\wedge \tau_x}_0 \left[1 + X_\ve(s)/\ve\right]^{1/3} ds \right\},  
 \end{multline}
provided  $Y_\ve(s) > 0, \ 0 \le s \le t$.  Setting $y(t)$ to be the LHS of (\ref{AJ4}) we see that
\begin{eqnarray*}
|dy/dt| &\le& 2y(t)/L^{1/3}_0 Y_\ve(t)^{2/3} + 2\ve E \left\{ \big[ 1+X_\ve(t \wedge \tau_x)/\ve\big]^{1/3} \right\} \\
&\le& 2y(t)/L^{1/3}_0 Y_\ve(t)^{2/3} + 2(\ve + \ve^{2/3}) + 2\ve^{2/3} E \left[ X_\ve(t\wedge \tau_x)^2 \right].
\end{eqnarray*}
Integrating this last inequality we conclude that
\begin{multline} \label{AK4}
E \left\{ \left[ X_\ve(t \wedge \tau_x) - Y_\ve(t \wedge \tau_x)\right]^2 \right\} \le \\
 \int^t_0 dt'\Big\{ 2(\ve + \ve^{2/3})
+ 2\ve^{2/3} E  \left[ X_\ve(t' \wedge \tau_x)^2 \right] \Big\}  \exp \left[ \int^t_{t'} 2\big/L^{1/3}_0 Y_\ve(s)^{2/3} ds \right] .  
\end{multline}
Using (\ref{AI4}) we see from (\ref{AK4}) that $X_\ve(t \wedge \tau_x)$ differs from $Y_\ve(t \wedge \tau_x)$ by $O(\ve^{1/3}).$

Recall now that the function $w_\ve(x,0)$ in (\ref{AE4}) is given by the formula,
\be \label{AL4}
w_\ve(x,0) = E\left[ w_\ve(X_\ve(T), T); \tau_x > T | X_\ve(0) = x  \right],
\ee
where $w_\ve(x,T)$ satisfies (\ref{AG4}).  It is easy to see that the RHS of (\ref{AE4}) is small for small $\del$ uniformly in $\ve, \ 0 < \ve < \ve_0$, if we replace $X_\ve(T)$ in (\ref{AL4}) by $Y_\ve(T)$.  In fact let $a_\ve$ have the property that if $Y_\ve(0) = a_\ve$ then $Y_\ve(T) =0$, whence $w_\ve(x,0) = 0, \ x < a_\ve$.  Now from (\ref{R4}) it follows that if $Y_\ve(0) > a_\ve + \del^{1/24}$ then $Y_\ve(T) > \del^{1/24}$.  We conclude therefore that $w_\ve(x,0)$ satisfies the inequalities,
\begin{eqnarray} \label{AM4}
w_\ve(x,0) &=& 0,  \ x < a_\ve \ ; \quad  w_\ve(x,0) \le 1,  \  a_\ve < x < a_\ve + \del^{1/24}, \nn \\
 \  \\
w_\ve(x,0) &\le& K\del^{11/12}, \ \ x > a_\ve + \del^{1/24}, \nn
\end{eqnarray} 
for a constant $K$ depending only on $\ve_0$.  Since $c_0(\cd)$ is an $L^1$ function (\ref{AM4}) implies that the RHS of (\ref{AE4}) is small for small $\del$, uniformly in $\ve, \ 0 < \ve < \ve_0$.

We can extend the argument in the previous paragraph to estimate the actual function $w_\ve(x,0)$ of (\ref{AL4}) by using (\ref{AK4}).  In fact one sees using the Chebyshev inequality and $w_\ve(\cd, T) \le 1$ that
\be \label{AN4}
w_\ve(x,0) \le K\del^{1/12}, \ \ x > a_\ve + \del^{1/24}.
\ee
Here we are using the assumption $\ve < \del^{1/4}$.  Next we show that $w_\ve(x,0)$ is small if $x < a_\ve - \del^{1/24}$.  To see this first note that for such an $x$ then $Y_\ve(t)$ with $Y_\ve(0) = x$ satisfies $Y_\ve(t) = 0$ for some $t=t_\ve$ satisfying $t_\ve < T - \del^{1/24}$.  Now from (\ref{AK4}) one has then that $X_\ve(t_\ve \wedge \tau_x) < O(\del^{1/12})$ with high probability.  We need to show that this implies that $\tau_x < T$ with high probability.  We consider the probability that the diffusion $X_\ve(t)$ with $X_\ve(t_\ve) = x$ hits 0 before time $t_\ve + \del^{1/24}$.  This is greater than the probability of the diffusion $Y_\ve(s)$ defined by (\ref{G4}) with $Y_\ve(0) = x$ hitting 0 before time $ \del^{1/24}$.  Let $I_\del$ be the interval $[0, \del^{3/48}]$, and $u_\ve(x)$ be the probability that $Y_\ve(s)$ with $Y_\ve(0)=x$ exits $I_\del$ through 0.  As previously $u_\ve(x)$ is given by the formula,
\[
u_\ve(x) = \int^{\del^{3/48}}_x \exp \Big[ - \int^z_0 h_\ve(z')dz'\Big]dz \; \Big/ \; 
\int^{\del^{3/48}}_0 \exp \Big[ - \int^z_0 h_\ve(z')dz'\Big]dz ,
\]
where $h_\ve(z)$ is defined by (\ref{I4}).  Next let $v_\ve(x)$ be the expected time for the diffusion $Y_\ve(s)$ with $Y_\ve(0) = x$ to exit $I_\del$.  Then there is the inequality,
\be \label{AO4}
1 - P\Big( Y_\ve(s) \ {\rm hits \ 0 \ before \ time} \ \del^{1/24} | Y_\ve(0) = x \Big) \le 1 - u_\ve(x)
 + \del^{-1/24} v_\ve(x), \ x \in I_\del.
\ee
Comparing the RHS of (\ref{AO4}) to the formulas obtained after (\ref{L4}) we conclude that if $0 < x < \del^{7/96}$ then the RHS of (\ref{AO4}) is bounded by $O(\del^{1/32})$.  Combining this with the Chebyshev inequality applied to (\ref{AK4}) we conclude 
\be \label{AP4}
w_\ve(x,0) \le K\del^{1/48}, \ \ 0 < x < a_\ve - \del^{1/24}.
\ee
The inequalities (\ref{AN4}), (\ref{AP4}) now imply that the RHS of (\ref{AE4}) is small for small $\del$, uniformly in $\ve, \ 0 < \ve < \del^{1/4}$.

To complete the proof of the equicontinuity of the functions $\La_\ve(T)$ we need to consider the case $\del^{1/4} < \ve < \ve_0$.  Our goal will be to show that
\be \label{AQ4}
w_\ve(x,T - \del^{1/3}) \le K\del^{1/24}, \ \ x > 0,
\ee
for some constant $K$ depending only on $\ve_0, \; L_0$.  It follows that $w_\ve(x,0)$ is also bounded by the RHS of (\ref{AQ4}), whence the RHS of (\ref{AE4}) is small for small $\del$, uniformly in $\ve, \  \del^{1/4} < \ve < \ve_0$.  To establish (\ref{AQ4}) we shall use perturbation theory to solve the terminal-boundary value problem for $w_\ve(x,t)$ on the domain $0 < x < L_0$, $T-\del^{1/4} < t < T$.  The terminal data satisfies (\ref{AG4}).  The boundary data at $x=0$ is zero and we may estimate the boundary data at $x=L_0$ from (\ref{AG4}), (\ref{AK4}).  Thus since $t > T-\del^{1/4}$ we have as before from the Chebyshev inequality that
\be \label{AR4}
w_\ve(L_0, t) \le K \del^{1/4}, \ \ T-\del^{1/4} < t < T,
\ee
again for a constant $K$ depending only on $\ve_0, L_0$.

We proceed in a similar way to Lemma 3.3.  Thus we write $w_\ve(x,t) = w_{1,\ve}(x,t) + w_{2,\ve}(x,t)$ where $w_{1,\ve}$ has zero  boundary data and $w_{2,\ve}$ has zero terminal data.  To estimate $w_{1,\ve}$, let $G_D(x,y,t)$ be the Dirichlet Green's function for the interval $[0, L_0]$, where $G_D$ is given by the formula (\ref{P3}).  Similarly to (\ref{R3}) we define $K_{\ve,T}(x,y,t)$ by
\be \label{AS4}
K_{\ve,T}(x,y,t)  = G_D \left( x, y, \ve(1+y/\ve)^{1/3}(T-t)\right), \ 0 < x,y < L_0, \ t < T.
\ee
Consider now the function
\be \label{AT4}
v_\ve(x,t) = \int^{L_0}_0 K_{\ve,T}(x,y,t) w_\ve(y, T)dy, \ t < T.
\ee
Then one has that 
\[  \frac{\pa v_\ve}{\pa t} + \ve\big(1 + x/\ve\big)^{1/3} \; \frac{\pa^2 v_\ve}{\pa x^2} + \Big[ \Big\{ \frac x{L_\ve(t)} \Big\}^{1/3} - 1 \Big] \frac{\pa v_\ve}{\pa x} = g_\ve(x,t),   \]
where $g_\ve$ may be estimated as in Lemma 3.1.  Observing that
\[	| \ve\big(1 + x/\ve \big)^{1/3} \ - \ve\big(1 + y/\ve\big)^{1/3} | \le |x-y|,  \] 
we conclude that there is a constant $C$ depending only on $\ve, L_0$ such that
$$
|g_\ve(x,t)| \le C \Big/ \sqrt{\ve(T-t)}, \ \ \ t < T.
$$
We conclude that
\be \label{AU4}
|w_{1,\ve}(x,t)| \le v_\ve(x,t) + C\sqrt{(T-t)/\ve}.
\ee
To estimate $w_{2,\ve}$ we note that $w_{2,\ve}$ has the representation,
\[   w_{2,\ve}(x,t) = E\Big[ w_\ve(L_0, \tau_{x,t}) ;  \tau_{x,t} < T, \; X_\ve(\tau_{x,t}) =L_0 | X_\ve(t) = x \Big],   \]
where $\tau_{x,t}$ is the first exit time from $[0, L_0]$ for the diffusion $X_\ve(s), s > t$, with $X_\ve(t) = x$.  Hence from (\ref{AR4}) we conclude that
\be \label{AV4}
w_{2,\ve}(x,t) \le K\, \del^{1/4}, \ T - \del^{1/4} < t < T.
\ee
The inequality (\ref{AQ4}) follows now from (\ref{AT4}), (\ref{AV4}) since $\ve > \del^{1/4}$.  This completes the proof of the equicontinuity of the functions $\La_\ve(T), \ 0 < \ve < \ve_0$, in the interval $0 < T < \La(0)/4$.

Next we show equicontinuity of the functions $L_\ve(\cd), \ 0 < \ve < \ve_0$, in the same interval.  Analogously to 
 (\ref{S4}) we show that for any $\eta > 0$ there exists $x_\eta > 0$ such that
\be \label{AW4}
\int^\infty_{x_\eta} dx \ x^{-2/3} \int^\infty_x c_\ve(x', T)dx' \ < \ \eta,
\ee
for all $\ve,  \ 0 < \ve < \ve_0$.  Observe that for $A > 0$,
\be \label{AX4}
\int^\infty_A dx \int^\infty_x c_\ve(x', T)dx' = \int^\infty_0 dz \; c_0(z) \int^\infty_A dx \; P\left(X_\ve (T) > x; \tau_{z,0} > T | X_\ve(0) = z \right).
\ee
From (\ref{AI4}) there is the inequality,
\[	 P\left(X_\ve (T) > x; \tau_{z,0} > T | X_\ve(0) = z \right) \le \min\left[ 1, \ K(z+1)^2 \big/ x^2 \right], \]
for some constant $K$ depending only on $L_0, \ve_0, T$.  Hence
\be \label{AY4}
\int^\infty_A dx  \ P\left(X_\ve (T) > x; \tau_{z,0} > T | X_\ve(0) = z \right) \le K_1 \; \min\left[ z+1, (z+1)^2 \big/ A \right]
\ee
for a constant $K_1$ depending only on $L_0, \ve_0, T$.  It follows easily from (\ref{AY4}) that the RHS of (\ref{AX4}) can be made arbitrarily small by choosing $A$ sufficiently large.  Now the existence of $x_\eta$ satisfying (\ref{AW4}) follows.

To complete the proof of equicontinuity of $L_\ve(\cd)$ we need to show that 
\be \label{AZ4}
\int^{2x_\eta}_0 dx \; x^{-2/3}  \int^\infty_x \left[ c_\ve(x',T) - c_\ve(x', T+\del) \right]dx' < \eta
\ee
for sufficiently small $\del > 0$,  provided $0 < \ve < \ve_0$.  As in (\ref{AE4}) we write
\be \label{BA4}
\int^\infty_a \left[ c_\ve(x,T) - c_\ve(x, T+\del) \right]dx = \int^\infty_0 \; w_{\ve,a}(x,0) c_0(x)dx.
\ee
Thus to prove (\ref{AZ4}) we need to get estimates on the functions $w_{\ve,a}$ which are similar to the estimates we obtained for $a=0$, but which are uniform for $a$ satisfying $0 \le a \le 2x_\eta$.  This is a straightforward  extension of the method we used for the case $a=0$.

We have proved equicontinuity of $\La_\ve(T), L_\ve(T)$, $0 < \ve < \ve_0$, in the interval $0 < T < \La(0)/4$.  We wish to extend this now to arbitrary values of $T$.  To see this suppose $L_\ve(\cd)$ is bounded below by $L_0 > 0$ in the interval $[0,T]$.  Then there is a constant $A$ depending only on $L_0, \; T, \ve_0$ such that
\be \label{BB4}
\int^\infty_A dx \int^\infty_x  c_\ve(x',T) dx' < 1/8, \ 0 < \ve < \ve_0 .
\ee
This follows from (\ref{AX4}) and (\ref{AY4}).  To show that $L_\ve(\cd), \La_\ve(\cd)$ remain bounded beyond the interval $[0,T]$ we note that
\be \label{BC4}
\int^\infty_{\La_\ve(T)/2}  dx \int^\infty_x  c_\ve(x',T) dx' + \frac{\La_\ve(T)}{2} \int^\infty_{\La_\ve(T)/2}  c_\ve(x',T) dx'
\ge 1/2.
\ee
From (\ref{BB4}), (\ref{BC4}) we see that there is a constant $K > 0$ depending only on $L_0, T, \ve_0$ and $\La_\ve(T)$ such that
\[   \int^\infty_{\La_\ve(T)/2} c_\ve(x',T) dx' \ge K, \ \ 0 < \ve < \ve_0.  \]
We proceed now as previously using the same methodology as we used to prove (\ref{AC4}). 
\end{proof}
\begin{theorem}  Let $c_\ve(x,t),  \ L_\ve(t), \ 0 < x,t < \infty$, be the solution  to the diffusive LSW problem (\ref{F1}), (\ref{J1}) with initial data $c_0(x)$ satisfying
\[	\int^\infty_0 (1 + x)c_0(x)dx < \infty, \ \ \int^\infty_0 x\; c_0(x)dx = 1.   \]
Denote by $c_0(x,t), \ L(t)$, the solution of the LSW problem (\ref{E1}), (\ref{F1}) with the same initial data. Then there are for all $x,t \ge 0$ the limits,
\begin{eqnarray} \label{BD4}
\lim_{\ve \ra 0} \int^\infty_x  c_\ve(x',t) dx' &=& \ \ \ \int^\infty_x \; c_0(x',t)dx', \\
\lim_{\ve \ra 0}  L_\ve(t) &=& L(t). \nn
\end{eqnarray}
\end{theorem}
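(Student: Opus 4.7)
The plan is to combine the equicontinuity provided by Lemma 4.2 with the identification of subsequential limits provided by Lemma 4.1, and then to invoke uniqueness for the classical LSW system. Fix $T>0$ and take $\ve_0=1$. By Lemma 4.2 the family $\{L_\ve(\cd):0<\ve\le 1\}$ is equicontinuous on $[0,T]$; the proof of that lemma also furnishes uniform bounds $0<L_{\min}\le L_\ve(t)\le L_{\max}<\infty$ on $[0,T]$ independent of $\ve$, a uniform upper bound on $\La_\ve(t)$, and the tail estimate (\ref{AW4}). Arzel\`a--Ascoli therefore gives, from any sequence $\ve_n\downarrow 0$, a subsequence (not relabelled) along which $L_{\ve_n}\ra \tilde L$ uniformly on $[0,T]$, where $\tilde L$ is continuous with $\tilde L\ge L_{\min}$.

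Applied to this subsequence, Lemma 4.1 yields for every $x\ge 0$ and $t\in[0,T]$,
\[
\lim_{n\ra\infty}\int^\infty_x c_{\ve_n}(x',t)\,dx' = \int^\infty_{\tilde F(x,t)} c_0(x')\,dx' =: \tilde w(x,t),
\]
where $\tilde F(\cd,t)$ is the characteristic map (\ref{A4}) associated with $\tilde L$. Let $\tilde c(\cd,t)$ be the nonnegative measure with $\tilde w(x,t)=\int_x^\infty \tilde c(x',t)\,dx'$. Then $\tilde c$ solves (\ref{E1}) with $\tilde L$ in place of $L$, and a change of variables through $\tilde F$ together with (\ref{R4}) shows $\int_0^\infty x\,\tilde c(x,t)\,dx=\int_0^\infty x c_0(x)\,dx=1$, provided $\tilde L$ is self-consistent in the sense of (\ref{A3}) for $\tilde c$.

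Proving this self-consistency is the heart of the argument. Writing
\[
L_{\ve_n}(t)^{1/3} = \frac{\tfrac{1}{3}\int^\infty_0 x^{-2/3}\, w_{\ve_n}(x,t)\,dx}{w_{\ve_n}(0,t)},\qquad w_\ve(x,t):=\int_x^\infty c_\ve(x',t)\,dx',
\]
and using that $L_{\ve_n}(t)\ra \tilde L(t)$, the required identity follows by passing to the limit in both integrals via dominated convergence. The tail estimate (\ref{AW4}) gives a $\ve_n$-uniform integrable majorant for $x^{-2/3} w_{\ve_n}(x,t)$ on $[x_\eta,\infty)$; near the origin the trivial bound $w_{\ve_n}(x,t)\le w_{\ve_n}(0,t) = 1/\La_{\ve_n}(t)$ combined with the uniform upper bound on $\La_{\ve_n}$ from (\ref{AC4}) gives the uniform domination $x^{-2/3}w_{\ve_n}(x,t)\le Kx^{-2/3}$ on $[0,x_\eta]$, which is integrable. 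The denominator $w_{\ve_n}(0,t)$ is similarly bounded above and away from zero, so the ratio converges to the analogous expression in $\tilde w$.

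Thus $(\tilde c,\tilde L)$ is a solution of the classical LSW system (\ref{E1})--(\ref{F1}) with initial data $c_0$. By the uniqueness theorems of Lauren\c{c}ot \cite{laur} or Niethammer--Pego \cite{np3}, $\tilde L=L$ and $\tilde c=c_0(\cd,\cd)$ on $[0,T]$. Since every sequence $\ve_n\downarrow 0$ thus admits a subsequence along which $L_{\ve_n}\ra L$ and $\int_x^\infty c_{\ve_n}(\cd,t)\,dx' \ra \int_x^\infty c_0(x',t)\,dx'$, the full net converges, giving (\ref{BD4}); extension from $[0,T]$ to all $t\ge 0$ is automatic because $T$ was arbitrary. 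The main technical obstacle is the dominated-convergence step identifying $\tilde L$ from the ratio above; once that is in hand, LSW uniqueness closes the argument.
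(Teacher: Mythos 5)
Your proposal is correct and follows essentially the same route as the paper: equicontinuity from Lemma 4.2 plus Arzel\`a--Ascoli to extract a uniform subsequential limit $\tilde L$, Lemma 4.1 to identify the limit of $\int_x^\infty c_{\ve_n}(x',t)\,dx'$, verification that the limit pair solves the classical LSW system, and LSW uniqueness to upgrade subsequential to full convergence; your dominated-convergence check of the self-consistency of $\tilde L$ (via the tail bound (\ref{AW4})) is simply a more detailed version of the paper's one-line assertion that (\ref{F1}) passes to the limit. One tiny correction: the domination $w_{\ve_n}(x,t)\le w_{\ve_n}(0,t)=1/\La_{\ve_n}(t)\le 1/\La(0)$ uses the monotonicity (lower bound) of $\La_{\ve_n}$, not the upper bound (\ref{AC4}), which is what keeps the denominator bounded away from zero.
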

\begin{proof} From Lemma 4.2 we have that the functions $L_\ve(t), \ 0 < \ve < \ve_0$, are equicontinuous on any finite interval $[0,T]$.  Hence there is a subsequence $\ve_j, \ j = 1,2,...,$ with $\lim_{j \ra \infty} \ \ve_j = 0$ such that $L_\ve(t)$ converges uniformly on the interval $[0,T]$ as $\ve \ra 0$ through the sequence $\{ \ve_j \}$ to a continuous function $L(t)$.  By Lemma 4.1 it follows that the first identity in (\ref{BD4}) holds, where $c_0(x,t)$ is the solution to (\ref{E1}).  Since (\ref{F1}) also holds for $c_\ve(\cdot,t)$ , it follows from (\ref{BD4}) that (\ref{F1}) must hold for $c_0(\cdot,t)$, whence $c_0(x,t)$ is the solution to the LSW problem (\ref{E1}), (\ref{F1}).  By uniqueness for the solution to the LSW problem, we can then conclude that (\ref{BD4}) holds as $\ve \ra 0$ through the reals. 
\end{proof}

\section{Convergence of Coarsening Rate}
Finally we wish to show that the rate of coarsening for the diffusive LSW problem (\ref{F1}), (\ref{J1}) converges as $\ve \ra 0$ to the rate of coarsening for the LSW model (\ref{E1}), (\ref{F1}).  To do this we will prove that
\be \label{A5}
\lim_{\ve \ra 0} \frac d{dT} \int^\infty_0  c_\ve(x,T) dx = \frac d{dT} \int^\infty_0  c_0(x,T) dx,
\ee
where $c_\ve$ and $c_0$ are as in Theorem 4.1. Evidently (\ref{N1}) follows from Theorem 4.1 and (\ref{A5}).
Observe that from (\ref{R4}), (\ref{B4}) the derivative on the RHS of (\ref{A5}) exists provided 
the function $c_0(\cdot)$ is continuous at $x=F(0,T)$.
We show next that the derivative on the LHS of (\ref{A5}) exists for all $\ve > 0$.
\begin{lem} Suppose the initial data $c_0(x)$ for the diffusive LSW problem (\ref{F1}), (\ref{J1}) satisfies the conditions of Theorem 4.1.  Then for $\ve, T > 0$ the function $\int^\infty_0 c_\ve(x,T)dx$ is differentiable w.r. to $T$.
\end{lem}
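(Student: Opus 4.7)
The plan is to derive the explicit flux identity
\begin{equation*}
\frac{d}{dT} \int_0^\infty c_\ve(x,T) \, dx = -\ve \frac{\pa c_\ve}{\pa x}(0, T),
\end{equation*}
and then conclude differentiability from the continuity of the right-hand side. Formally the identity comes from integrating (\ref{J1}) in $x$ over $[0,\infty)$: the Dirichlet condition $c_\ve(0,T) = 0$ collapses all boundary terms at $x=0$ except the $\ve \pa_x c_\ve(0,T)$ piece, and decay of $c_\ve$ at infinity kills the boundary term at $+\infty$.

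The first ingredient is up-to-boundary smoothness of $c_\ve$. Fix $\ve > 0$. On any rectangle $[0,R]\times [t_1,t_2]$ with $0 < t_1 < t_2$, equation (\ref{J1}) is uniformly parabolic with $C^\infty$ coefficients in $x$: the diffusion coefficient $\ve(1+x/\ve)^{1/3}$ is strictly positive and smooth for $\ve > 0$ fixed, and $L_\ve(\cd)$ is continuous and bounded away from $0$ on $[t_1,t_2]$ by the arguments of Lemma 4.2. Standard parabolic Schauder estimates up to the boundary (where $c_\ve(0,t)=0$) then give $c_\ve \in C^\infty([0,\infty) \times (0,\infty))$, so that $T \mapsto \pa_x c_\ve(0,T)$ is continuous on $(0,\infty)$. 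Equivalently, the required boundary derivative bound can be produced by adapting the Green's function perturbation series of Lemma 3.3 to the forward equation (\ref{J1}), using $c_\ve(\cd,t_1)$ for arbitrarily small $t_1 > 0$ as the initial datum after parabolic smoothing.

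To justify the flux identity rigorously, introduce a cutoff $\phi_R \in C^\infty([0,\infty))$ with $\phi_R \equiv 1$ on $[0,R]$, $\phi_R \equiv 0$ on $[2R,\infty)$, $\phi_R(0)=1$, $\phi_R'(0)=0$, and $\|\phi_R^{(j)}\|_\infty \le C R^{-j}$. Multiplying (\ref{J1}) by $\phi_R$ and integrating by parts twice, using $c_\ve(0,t) = 0$ and $\phi_R'(0) = 0$, yields
\begin{equation*}
\frac{d}{dt} \int_0^\infty \phi_R c_\ve \, dx = -\ve \frac{\pa c_\ve}{\pa x}(0,t) + \int_R^{2R} \Big[\ve(1+x/\ve)^{1/3}\phi_R'' - \big(1-(x/L_\ve(t))^{1/3}\big)\phi_R'\Big] c_\ve \, dx.
\end{equation*}
Using the conserved first moment $\int_0^\infty x\, c_\ve(x,t)\,dx = 1$ from (\ref{F1}) together with $\int_R^\infty c_\ve \, dx \le 1/R$ and the lower bound on $L_\ve(t)$, the cutoff error is $O(R^{-5/3})$ uniformly on compact $t$-intervals. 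Sending $R \to \infty$, together with the pointwise convergence $\int \phi_R c_\ve \to \int c_\ve$, yields the identity; continuity of $\pa_x c_\ve(0,\cd)$ then gives the claimed differentiability.

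The main technical obstacle is the up-to-boundary regularity: one must rule out pathological behavior of $c_\ve(\cd,t)$ near $x=0$ and show that $\pa_x c_\ve(0,t)$ is genuinely a continuous function of $t > 0$. Once this is in hand the cutoff/moment argument is essentially routine, since conservation of the first moment already provides enough tail decay to annihilate the cutoff error.
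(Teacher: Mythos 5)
Your cutoff computation and the use of the conserved first moment to kill the error term over $[R,2R]$ are fine, and the target identity $\frac{d}{dT}\int_0^\infty c_\ve\,dx=-\ve\,\pa_x c_\ve(0,T)$ is a legitimate route. The genuine gap is the regularity step on which everything rests. The coefficients of (\ref{J1}) are \emph{not} $C^\infty$ up to $x=0$: the drift coefficient $1-(x/L_\ve(t))^{1/3}$ is only H\"older of exponent $1/3$ at the boundary, and in non-divergence form the zeroth-order coefficient behaves like $-\tfrac13 x^{-2/3}L_\ve(t)^{-1/3}$, which blows up exactly where you need boundary regularity (compare the operator in (\ref{AA3})). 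Moreover, Schauder theory also requires H\"older continuity of the coefficients in time, whereas $L_\ve(\cdot)$ is only known to be continuous; you cannot bootstrap smoothness of $L_\ve$ in $t$ without circularity, since $L_\ve$ is defined through moments of $c_\ve(\cdot,t)$ whose time-differentiability is precisely the kind of statement the lemma is asserting. So "standard parabolic Schauder estimates up to the boundary" do not deliver $c_\ve\in C^\infty([0,\infty)\times(0,\infty))$, nor even, off the shelf, the continuity of $T\mapsto\pa_x c_\ve(0,T)$ that your argument needs. Your fallback remark (adapt the Green's function perturbation series of Lemma 3.3 to the forward equation) is closer to a proof, but it is not a routine adaptation: the forward operator carries the singular potential $\sim x^{-2/3}$ near $x=0$, which Lemma 3.3 (written for the adjoint operator, with bounded coefficients and no zeroth-order term) never has to face, so the needed kernel estimates at the boundary must be re-derived.

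This is in fact why the paper argues entirely on the dual side. It writes $\int_0^\infty c_\ve(x,T)\,dx=\int_0^\infty w_\ve(x,0,T)c_0(x)\,dx$, where $w_\ve$ solves the backward equation for ${\mathcal L}^*_{t,\ve}$ with terminal data $1$ and Dirichlet condition at $x=0$, and proves that $w_\ve(x,t,T)$ is differentiable in $T$ with bounded derivative by a Green's function perturbation expansion on the interval $[0,\ve]$ (estimates (\ref{D5})--(\ref{I5}), (\ref{K5})) combined with the renewal-type decomposition (\ref{L5}) of the first-passage density; differentiating under the integral then gives $\frac{d}{dT}\int c_\ve\,dx=\int\rho_\ve(x,0,T)c_0(x)\,dx$. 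That route needs no pointwise regularity of $c_\ve$ near $x=0$, no time regularity of $L_\ve$ beyond continuity, and no regularity of the initial data, which is why it succeeds where the Schauder shortcut does not. To salvage your approach you would have to supply the up-to-boundary gradient continuity for the forward problem by an analogous kernel/perturbation argument (or by a theory tolerating the $x^{-2/3}$ potential and merely-continuous-in-time drift); as written, the regularity claim is asserted rather than proved, and it is the crux of the lemma.
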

\begin{proof} Let $w_\ve(x,t,T), \ t < T, x > 0$, be the solution of $\pa w_\ve/\pa t = - {\mathcal L}^*_{t,\ve}  w_\ve$, $w_\ve(x,T,T) = 1, \ w_\ve(0,t,T) = 0, \ t < T$, where ${\mathcal L}^*_{t,\ve}$ is given by (\ref{C4}).  Then we have
\[	\int^\infty_0 c_\ve(x,T)dx = \int^\infty_0 w_\ve(x,0,T)c_0(x) dx .  \]
We shall show by perturbation theory that for $t < T$ such that $T-t$ is sufficiently small, the function $w_\ve(x,t,T)$ is differentiable w.r. to $T$ and the derivative is a bounded function.  This will prove the result since the function $v_\ve(x,t,T) = -\pa w_\ve(x,t,T)/\pa T$ satisfies $\pa v_\ve /\pa t =- {\mathcal L}^*_{t,\ve}v_\ve, \; t < T$, with boundary condition $v_\ve(0,t,T) = 0$.

Observe that $1 - w_\ve$ satisfies the diffusion equation with zero terminal data and boundary data 1 at $x=0$.  We shall show how to construct this function using perturbation theory.  We first restrict ourselves to some finite interval $0 < x < \ve$.  Let $w_{1,\ve}(x,t,T), \; t < T, 0 < x < \ve$, be the solution of $\pa w_{1,\ve}/\pa t = -{\mathcal L}^*_{t,\ve} w_{1,\ve},  \ w_{1,\ve} (x,T,T) = 0,  \ w_{1,\ve} (0,t,T)  = 1, \; w_{1,\ve}  (\ve,t,T) = 0, \  t < T, 0 < x < \ve$.  Just as in (\ref{Q3}) $w_{1,\ve}$ can be represented in terms of the Dirichlet Green's function $G_\ve(x,y,t,s)$ for the interval.  Thus
\be \label{J5}
w_{1,\ve}  (x,t,T) = \ve \; \int^T_t \ ds \ \frac{\pa G_\ve}{\pa y} \; (x, 0, t, s).	
\ee
As in (\ref{T3}) we can represent $G_\ve$ in a series expansion,
\begin{eqnarray} \label{B5}
G_\ve(x,y,t,T) &=& K_{\ve,T}(x,y,t) - \sum^\infty_{n=0} v_{n,\ve,T}(x,y,t), \\
v_{n,\ve,T}(x,y,t) &=& - \int^T_t \; ds \; \int^{\ve}_0 dy' \, K_{\ve,s}(x,y',t)g_{n,\ve,T}(y',y,s), \nn \\
g_{0,\ve,T}(x,y,t) &=& g_{\ve,T}(x,y,t) = \left[ \frac \pa{\pa t} + {\mathcal L}^*_{t,\ve} \right] K_{\ve,T}(x,y,t), \nn \\
g_{n+1,\ve,T} &=& g_{n,\ve,T} - \left\{ \frac \pa{\pa t} + {\mathcal L}^*_{t,\ve} \right\} v_{n,\ve,T}, \ n \ge 0, \nn
\end{eqnarray}
where $K_{\ve,T}$ is given by (\ref{AS4}).  The function $g_{n,\ve,T}$ is given by the recursion formula,
\be \label{C5}
g_{n+1,\ve,T}(x,y,t) =\int^T_t ds \int^{\ve}_0 dy' \left\{ \frac \pa{\pa t} + {\mathcal L}^*_{t,\ve} \right\} K_{\ve,s}(x,y',t)g_{n,\ve,T}(y'y,s).
\ee
Now as in (\ref{U3}) there is a universal constant $C > 0$ such that
\be\label{D5}
|g_{n,\ve,T}(x,y,t)| \le \frac{C^n(T-t)^{n/2 \;-\; 1/2}} {\ve^{(n+1)/2}} \ G(x-y, 2\ve(T-t)), \ n \ge 0.
\ee
Hence the series (\ref{B5}) converges for $T-t< \ve/C$.  If we formally differentiate the RHS of (\ref{D5}) w.r. to $T$ we are led to expect the inequality,
\be \label{E5}
|\pa g_{n,\ve,T}(x,y,t)/\pa y| \le \frac{C^n(T-t)^{n/2 \;-\; 1}} {\ve^{(n/2 \;+\; 1)}} \ G(x-y, 2\ve(T-t)), \ n \ge 0.
\ee
It is easy to see that (\ref{E5}) holds for $n=0$.  For $n=1$ we use (\ref{C5}).  Writing
\be \label{F5}
g_{1,\ve,T} = \int^{(T+t)/2}_t \; ds + \int^T_{(T+t)/2} \; ds,
\ee
one can easily see that the derivative of the first integral on the RHS of (\ref{F5}) w.r. to $y$ is bounded by the RHS of (\ref{E5}) for $n=1$.  To estimate the derivative of the second integral we need to integrate by parts.  We argue as in (\ref{X3}).  Thus
\be \label{G5}
\left\{ \frac {\pa}{\pa s} + {\mathcal L}^*_{\ve,y'} \right\} \frac{\pa K_{\ve,T}} {\pa y} (y',y,s) = \left[ \left\{ \frac{y'}{L_\ve(s)}\right\}^{1/3} - 1 \right] \frac{\pa^2 K_{\ve,T}} {\pa y' \; \pa y} 
\ee
\[ +	\frac{(1+y'/\ve)^{1/3}} {3\ve(1+y/\ve)^{4/3}} \ \frac{\pa K_{\ve,T}} {\pa s} + \left[ 1 - \frac{(1+y'/\ve)^{1/3}} {(1+y/\ve)^{1/3}} \right] \frac{\pa^2 K_{\ve,T}} {\pa s \; \pa y} \  .    \]
We substitute the RHS of (\ref{G5}) into the second integral in (\ref{F5}) as the expression for $\pa g_{0,\ve,T}(y',y,s)/\pa y$.  We then integrate by parts w.r. to $y'$ for the first term, and $s$ for the last two terms in (\ref{G5}).  It is easy to see from this that (\ref{E5}) holds for $n=1$.  Hence (\ref{E5}) holds by induction for all $n \ge 0$.  It follows then from (\ref{B5}) that
\be \label{H5}
|\pa v_{n,\ve,T}(x,y,t)/\pa y| \le \frac{C^n(T-t)^{n/2}}{\ve^{n/2\;+\;1}} \ G(x-y, 2\ve(T-t)), \ n \ge 0.
\ee
Note that to obtain (\ref{H5}) for $n=0$ we need to use the representation (\ref{G5}) and integrate by parts.  Hence if $(T-t) < \ve/C$ the series in (\ref{B5})  for the derivative $\pa G_\ve(x,y,t,T)/\pa y$ converges, and we conclude that
\be \label{I5}
|\pa G_\ve (x,y,t,T)/\pa y| \le \frac C{\sqrt{\ve(T-t)}} \ G(x-y, 2\ve(T-t)), 
\ee
for some constant $C$.  Now from (\ref{J5}), (\ref{I5}) we see that $w_{1,\ve}(x,t,T)$ is differentiable w.r. to $T$ and is given by the formula
\be \label{K5}
\frac{\pa w_{1,\ve}}{\pa T} \; (x,t,T) = \ve \; \frac{\pa G_\ve}{\pa y}(x, 0, t, T).
\ee

We return to consideration of the function $w_\ve(x,t,T)$.  If $X_\ve(s)$ is the diffusion process associated with ${\mathcal L}^*_{s,\ve}$ then
\[	1 - w_\ve(x,t,T) = P\big( \tau_{x,t} < T| X_\ve(t) = x \big),	\]
where $\tau_{x,t}$ is the first hitting time at 0 for $X_\ve(s), s \ge t$.  Suppose that $0 < x < \ve$ and let $\tau_{1,x,t} $ be the first exit time from the interval $[0,\ve]$ for $X_\ve(s), s \ge t$, with $X_\ve(t) = x$.  If $X_\ve(\tau_{1,x,t}) = \ve$ denote by $\tau_{2,x,t} > \tau_{1,x,t}$ the first hitting time at $\ve/2$.  The density $d\mu_{x,t}(s), \ s>t$, associated with $\tau_{2,x,t}$ is defined by
\[	P\left( \tau_{2,x,t} < T\; ; \; X_\ve(\tau_{1,x,t}) = \ve \right) = \int^T_t d\mu_{x,t}(s).	\]
We can use this density to write $1-w_\ve$ in terms of the function $w_{1,\ve}$.  Thus
\begin{multline} \label{L5}
1 - w_\ve(x,t,T) = w_{1,\ve}(x,t,T) + \int^T_t \ d\mu_{x,t}(s) w_{1,\ve}(\ve/2, s, T) \\
+ \displaystyle{\int_{t<s_1 < s_2 < T}} d\mu_{x,t}(s_1) d\mu_{\ve/2,\;s_1}(s_2) w_{1,\ve}(\ve/2,\;s_2,\;T) + \cdots \ . 
\end{multline}
It is evident that the series converges and is term by term differentiable w.r. to $T$, provided $(T-t) << \ve$. 
\end{proof}
Let us write $\rho_\ve(x,t,T) = -\pa w_\ve(x,t,T)/\pa T$ where the function $w_\ve(x,t,T)$ is given in Lemma 5.1.  Then $\rho_\ve(x,t,\cd)$ is the density for the exit time $\tau_{x,t}$ at 0 of the diffusion $X_\ve(s)$ given by (\ref{E4}) with $X_\ve(t) = x$.
\begin{lem}  There are positive constants $c_1,C_1 > 0$ such that if $T-t = c_1\ve$ then $\rho_\ve$ satisfies the inequality,
\[	\rho_\ve(x,t,T) \le \frac{C_1}{\ve} \exp\left[ -(x/4\ve)^{5/3} \right], \ \ x \ge 0.	\]
\end{lem}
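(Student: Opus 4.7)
The plan is to change variables so that the diffusion coefficient of (\ref{E4}) becomes constant, then estimate the resulting first-passage density by perturbation theory around pure Brownian motion, in the spirit of Lemma 5.1.  Set
\[
f(x)=\frac{6\sqrt{\ve}}{5}\bigl[(1+x/\ve)^{5/6}-1\bigr],\qquad f'(x)=\frac{1}{\sqrt{\ve}\,(1+x/\ve)^{1/6}}\ ,
\]
and let $Y(s)=f(X_\ve(s))$.  By It\^o's formula applied to (\ref{E4}),
\[
dY=\sqrt{2}\,dW+b_\ve\,ds,\qquad b_\ve(x,s)=-\frac{1-(x/L_\ve(s))^{1/3}}{\sqrt{\ve}\,(1+x/\ve)^{1/6}}-\frac{1}{6\sqrt{\ve}\,(1+x/\ve)^{5/6}}\ ,
\]
so the diffusion coefficient of $Y$ is the constant $\sqrt{2}$.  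Since $f$ is increasing with $f(0)=0$, $\rho_\ve(x,t,T)$ equals the first-passage density at $0$ of $Y$ started at $y_0=f(x)$, and a short calculation gives $|b_\ve(x,s)|\le K/\sqrt{\ve}$ uniformly in the relevant range.

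For $x$ in a bounded multiple of $\ve$ (say $0\le x\le 2\ve$), the bound follows from the construction of Lemma 5.1 applied to the interval $[0,2\ve]$: the identity $\rho_\ve=-\partial w_\ve/\partial T$ combined with the analogue of (\ref{K5}) and the Green's function estimate (\ref{I5}) yields $\rho_\ve\le C/\ve$ at $T-t=c_1\ve$, and since $\exp[-(x/4\ve)^{5/3}]\ge\exp[-(1/2)^{5/3}]$ is bounded below on this range, the claim follows.  For $x\ge 2\ve$, one performs the analogous perturbation expansion in the $Y$-variable around the first-passage density of Brownian motion with variance rate $2$,
\[
\tilde\rho(y_0,\delta)=\frac{y_0}{2\sqrt{\pi\delta^3}}\exp\!\left[-\frac{y_0^2}{4\delta}\right],
\]
with the drift $b_\ve$ playing the role of the perturbation.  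For $x\ge 2\ve$ one has $f(x)\ge(3/5)\ve^{-1/3}x^{5/6}$, so the leading term at $\delta=c_1\ve$ satisfies
\[
\tilde\rho(f(x),c_1\ve)\le\frac{C}{\ve}\Bigl(\frac{x}{\ve}\Bigr)^{5/6}\exp\!\left[-\frac{9}{100c_1}\Bigl(\frac{x}{\ve}\Bigr)^{5/3}\right]\ ,
\]
and choosing $c_1$ small enough that $9/(100c_1)>(1/4)^{5/3}$ (for instance $c_1=1/2$) allows the polynomial prefactor to be absorbed into the exponential, yielding the claimed bound.

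The principal obstacle is executing the perturbation expansion cleanly when $b_\ve\sim 1/\sqrt{\ve}$ is unbounded as $\ve\to 0$.  Each Duhamel iteration introduces a factor $b_\ve\,\partial_y q_D$ with a nonintegrable time-singularity at $s=T$; this must be tamed by an integration by parts in time, in the spirit of the manipulation after (\ref{W3}) used in the proof of Lemma 3.3.  Once this is done, each additional term in the series carries the benign factor $\sqrt{c_1}$, so the series converges geometrically for $c_1$ small, with sum dominated by a universal multiple of the leading Brownian term $\tilde\rho(f(x),c_1\ve)$.
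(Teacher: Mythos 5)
Your handling of the range $0\le x\le 2\ve$ coincides with the paper's (both rest on the series (\ref{L5}) together with the Green's function estimate (\ref{I5}) from Lemma 5.1), and the Lamperti-type substitution $y=f(x)$ is a reasonable global repackaging of the scaling that drives the result; your leading-term arithmetic ($f(x)\ge \frac{3}{5}\ve^{-1/3}x^{5/6}$, exponent at least $\frac{9}{100c_1}(x/\ve)^{5/3}$, prefactor absorbed using the slack in $c_1$) is fine. But for $x\ge 2\ve$ the argument as written has a genuine gap. First, the asserted uniform bound $|b_\ve|\le K/\sqrt{\ve}$ is false for large $x$: the term $(x/L_\ve(s))^{1/3}$ makes $|b_\ve(x,s)|\sim x^{1/6}/\ve^{1/3}$, which exceeds $K/\sqrt{\ve}$ once $x$ is of order $1/\ve$, and the lemma must hold for all $x\ge 0$; moreover the Duhamel integrands sample the drift at all spatial positions, so without a localization the claimed clean per-iteration factor $\sqrt{c_1}$ is not justified. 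Second, and more centrally, the assertion that the full perturbation series is dominated by a universal multiple of the leading Brownian first-passage term, with the Gaussian exponent essentially intact and uniformly in $x$, is precisely what needs proof; it is not a routine consequence of convergence of the series. In the paper's analogous expansions (e.g.\ (\ref{D5}), (\ref{W5})) each iterate is re-dominated by a Gaussian with enlarged variance, and controlling this loss on the whole half-line with an $x$-dependent, unbounded drift is exactly the hard technical point your sketch passes over with the phrase ``in the relevant range.''

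The paper's proof is organized to avoid ever needing such a global estimate. For $x>\ve$ it does not bound the hitting density at $0$ directly; via (\ref{L5}) it reduces the problem to bounding the probability that the path reaches $\ve/2$ within time $c_1\ve$, so only the downward part of the drift (bounded by $1$) matters, via comparison with the constant-drift process $Z_\ve$ of (\ref{N5}). It then localizes to the single dyadic interval $[2^N\ve,2^{N+1}\ve]$ containing $x$, where the diffusion coefficient is comparable to a constant, and the local perturbation argument of Lemma 5.1 gives the small-time exit bound $P(\tau_{1,x,t}<t+c_1\ve)\le C_1\exp[-(2^N/10)^{5/3}]$, which already carries the full exponential factor $\exp[-c(x/\ve)^{5/3}]$. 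To repair your route you would need an analogous localization (or a comparison discarding the outward drift) before running the expansion in the $y$-variable, together with a proof that each term in the expansion retains the Gaussian factor with a uniform constant; the choice $c_1=1/2$ is harmless since decreasing $c_1$ only strengthens the exponent, but the uniform-in-$x$ claim does not follow from what you have written.
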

\begin{proof} The result follows from Lemma 5.1 provided $0 < x < \ve$.  For $x > \ve$ let $\tau_{x,t}$ be the first hitting time at $\ve/2$ for the diffusion $X_\ve(s)$ with $X_\ve(t) = x$.  Then from the formula (\ref{L5}) it will be sufficient to show that
\be \label{M5}
P(\tau_{x,t} < T) \le C_1 \exp \left[ -(x/4\ve)^{5/3} \right].
\ee
For any non-negative integer $N$ let $I_N$ be the interval $[2^N\ve, 2^{N+1} \ve]$, and $Z_\ve(s)$ the diffusion process started at $Z_\ve(t) = x \in I_N$ which satisfies the stochastic equation,
\be \label{N5}
dZ_\ve(s) = -ds + \sqrt{2\ve} (1 + Z_\ve(s)/\ve)^{1/6} dW(s).
\ee
We denote by $\tau_{1,x,t}$ the exit time from $I_N$ for the process $Z_\ve(s)$ with $Z_\ve(t) = x$.  Evidently one has $P(\tau_{x,t} < T) \le P(\tau_{1,x,t} < T)$.  We can generate the density for $\tau_{1,x,t}$ by perturbation theory as in Lemma 5.1.  The series converges provided $(T-t)/2^{N/3}\ve << 1$  If we take $x \sim 2^{N+1/2}\ve$ then one sees that provided $T-t = c_1\ve$ for some sufficiently  small universal constant $c_1$ there is the inequality,
\[    P(\tau_{1,x,t} < t + c_1 \ve) < C_1 \exp \left[ -(2^N/10)^{5/3} \right], \]
where $C_1$ is also a universal constant.  The inequality (\ref{M5}) follows. 
\end{proof}
From Lemma 5.2 we see that the integral of the function $\rho_\ve(\cd, t, T)$ is bounded independent of $\ve$ as $\ve \ra 0$ provided $T-t \sim \ve$.  We shall show that the integral is in fact close to 1 for $T-t = O(1)$ as $\ve \ra 0$.
\begin{lem}  There exists $\del_0 > 0$ such that if $\del$ satisfies $0 < \del < \del_0$ then one can find $\ve(\del) > 0$ for which the following inequality holds:
\be \label{O5}
\left| 1 - \int^\infty_0 \; \rho_\ve(x,t,T)dx \right| < \del^{1/10}, \ T-t = \del, \ 0 < \ve < \ve(\del).
\ee
\end{lem}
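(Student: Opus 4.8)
\emph{Setup.} Write $\Phi_\ve(t,T)=\int_0^\infty\rho_\ve(x,t,T)\,dx$. The plan is first to identify $\Phi_\ve(t,T)$ with the instantaneous flux through the origin at time $T$ of the \emph{forward} problem (\ref{J1}) run from time $t$ with Lebesgue initial data. Since $w_\ve(x,t,T)=P(\tau_{x,t}>T\,|\,X_\ve(t)=x)$, conditioning on $X_\ve(t')$ on the event $\{\tau_{x,t}>t'\}$ gives, for $t\le t'<T$,
\[
\rho_\ve(x,t,T)=\int_0^\infty q_\ve(x,t;y,t')\,\rho_\ve(y,t',T)\,dy ,
\]
where $q_\ve(x,t;\cd,\cd)$ is the transition density of $X_\ve$ killed at $0$. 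Integrating in $x$ and noting that $y\mapsto c_\ve^{(1)}(y,t'):=\int_0^\infty q_\ve(x,t;y,t')\,dx$ is the bounded solution of (\ref{J1}) with datum $c_\ve^{(1)}(\cd,t)\equiv1$ and $c_\ve^{(1)}(0,\cd)=0$, one obtains the key identity
\[
\Phi_\ve(t,T)=\int_0^\infty c_\ve^{(1)}(y,t')\,\rho_\ve(y,t',T)\,dy ,\qquad t\le t'<T .
\]
Because $0\le c_\ve^{(1)}\le1$, choosing $t'=T-c_1\ve$ and applying Lemma 5.2 already gives the universal a priori bound $\Phi_\ve(t,T)\le C_2$ for all $\ve>0$ and $t<T$.

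\emph{Passage to the limit $\ve\to0$.} Take $t'=T-c_1\ve$ and substitute $y=\ve z$, so that $\Phi_\ve(t,T)=\int_0^\infty c_\ve^{(1)}(\ve z,t')\,\widetilde\rho_\ve(z)\,dz$, where $\widetilde\rho_\ve(z):=\ve\,\rho_\ve(\ve z,t',T)$ is the exit density of the diffusion $X_\ve(t'+\ve u)/\ve$. By the scaling argument already used in Lemma 5.2 this rescaled diffusion converges, as $\ve\to0$, to the process $Z$ solving $dZ=-du+\sqrt{2}(1+Z)^{1/6}\,dW$, so $\widetilde\rho_\ve(z)\to\rho_Z(z,0,c_1)$ with the uniform bound $\widetilde\rho_\ve(z)\le C_1 e^{-(z/4)^{5/3}}$ inherited from Lemma 5.2. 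Meanwhile $c_\ve^{(1)}(\ve z,t')$ is the inner (boundary–layer) profile of $c_\ve^{(1)}$ near $x=0$: the layer has width and relaxation time $O(\ve)$, and since it has been evolving for the time $t'-t=\del-c_1\ve\gg\ve$ it quasi–statically tracks $\beta(\del-c_1\ve)\,\Psi_*(z)$, where $\Psi_*$ is the stationary profile of $\pa_z^2[(1+z)^{1/3}\Psi_*]+\pa_z\Psi_*=0$ with $\Psi_*(0)=0$, $\Psi_*(\infty)=1$, and $\beta(\tau)$ is the far field (bulk) value, namely the density at $0^+$ at time $t+\tau$ of the deterministic flow of Lebesgue measure under the characteristics (\ref{A4}). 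Dominated convergence then yields $\Phi_\ve(t,T)\to\beta(\del)\int_0^\infty\Psi_*(z)\,\rho_Z(z,0,c_1)\,dz$ as $\ve\to0$.

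\emph{Two computations.} First, $\int_0^\infty\Psi_*(z)\,\rho_Z(z,0,c_1)\,dz$ is the flux through $0$ at time $c_1$ of the rescaled forward equation with the time–independent datum $\Psi_*$, hence equals the spatially constant stationary flux $\pa_z[(1+z)^{1/3}\Psi_*]\big|_{z=0}$. Integrating the stationary equation once gives $\pa_z[(1+z)^{1/3}\Psi_*]+\Psi_*=A$, and boundedness of $\Psi_*$ at $\infty$ forces $A=1$; evaluating at $z=0$ (where $\Psi_*(0)=0$) gives $\pa_z\Psi_*(0)=1$, so this integral equals $1$. Second, by (\ref{R4}) the density of the deterministic flow of Lebesgue at $0^+$ at time $t+\tau$ is $\exp[-\tfrac13\int\{x(s)^2L(s)\}^{-1/3}\,ds]$ along the characteristic reaching $0$ at time $t+\tau$; using $L(\cd)\ge L_0>0$ and $x(s)\sim(t+\tau-s)$ near the exit one gets $\beta(\tau)=1-O(\tau^{1/3})$. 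Hence $\Phi_\ve(t,T)\to 1-O(\del^{1/3})$. One then picks $\del_0$ so small that the $O(\del^{1/3})$ term is $<\tfrac12\del^{1/10}$ for $0<\del<\del_0$, and then $\ve(\del)$ so small that $|\Phi_\ve(t,T)-\beta(\del)|<\tfrac12\del^{1/10}$ for $0<\ve<\ve(\del)$, which gives (\ref{O5}).

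\emph{Main obstacle.} The delicate step is the matched–asymptotics claim in the second paragraph: rigorously showing that the rescaled inner profile $c_\ve^{(1)}(\ve\,\cd,t')$ converges to $\beta(\del)\Psi_*$, with the far field of the inner solution matching the slowly varying bulk value $\beta(t'-t)$, in a mode (locally uniform, and compatible with the exponential tail control on $\widetilde\rho_\ve$) strong enough to pass to the limit inside the integral. The natural route is to sandwich $c_\ve^{(1)}$ near $x=0$ between sub– and super–solutions of the form $\beta_\pm(\del)\,\Psi_*(x/\ve)$, controlling $\beta_\pm(\del)$ and the perturbation error by the Green's–function/perturbation machinery of $\S3$; the remaining ingredients (the SDE scaling limit, the bound $\Phi_\ve\le C_2$, the elementary estimate on $\beta$) are routine given the earlier sections.
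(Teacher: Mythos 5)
Your skeleton is sound as far as it goes: the duality identity you derive is exactly the paper's (\ref{R5}), the a priori bound $\Phi_\ve\le C_2$ via Lemma 5.2 is legitimate, and your predicted limiting value $\beta(\del)=\exp[-\tfrac13\int\{x(s)^2L(s)\}^{-1/3}ds]=1-O(\del^{1/3})$ agrees with what the paper later proves in (\ref{BB5}). But the proposal has a genuine gap, and it sits exactly where the whole difficulty of the lemma lies: the two convergence statements in your second paragraph are asserted, not proved. First, "$\widetilde\rho_\ve(z)\to\rho_Z(z,0,c_1)$ by the scaling argument already used in Lemma 5.2" overstates what Lemma 5.2 gives — it is only a Gaussian-type \emph{upper bound} on the exit density, not a local limit theorem for it; establishing convergence of the rescaled exit density would require the perturbation machinery of Lemma 5.1 at unit scale together with control of the $O(\ve^{1/3})$ drift correction. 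Second, and more seriously, the matched-asymptotics claim $c_\ve^{(1)}(\ve z,T-c_1\ve)\to\beta(\del)\Psi_*(z)$ (with uniformity sufficient for dominated convergence against $\widetilde\rho_\ve$) is precisely the quantitative pointwise-in-time statement the lemma is about, and you yourself flag it as "the main obstacle" and offer only a programme (sub/supersolutions $\beta_\pm\Psi_*(x/\ve)$) rather than an argument. A referee cannot accept the conclusion on the strength of a plan for its hardest step; as written, the proof of (\ref{O5}) reduces to an unproven boundary-layer theorem of comparable depth.

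It is worth noting that the paper's proof deliberately avoids needing any sharp inner profile for $c_\ve$. It first proves the estimate averaged over a window in $T$ (inequality (\ref{P5})), using only concentration of the exit time around the deterministic transit time; it then transfers this to the translation-invariant comparison diffusion $Z_\ve$ with constant drift $-1$, where time-translation invariance plus the duality (\ref{R5})/(\ref{U5}) upgrades the averaged bound to a pointwise-in-$T$ bound using nothing finer than $0\le c'_\ve\le1$ and concentration of $\rho'_\ve$ at $x\sim\del$; finally it compares $\rho_\ve$ with $\rho'_\ve$ (the dyadic walk representation (\ref{V5}) leading to (\ref{AA5})) and $c_\ve$ with $c'_\ve$ (the coupling $X_\ve\le Y_\ve$, the Feynman--Kac formulas (\ref{S5}), (\ref{AD5}), and explicit one-dimensional Green's function estimates (\ref{AG5})--(\ref{AV5})). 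These comparisons occupy several pages and are the real content; your route would have to supply an equivalent amount of analysis inside the unproved inner-layer claim (including error bounds strong enough to beat $\del^{1/10}$), so in its present form the proposal is an attractive alternative strategy but not a proof.
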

\begin{proof}  We first show that (\ref{O5}) holds in an averaged sense.  Thus let $0 < \eta < \del/2$.  We shall see that there exists $\ve(\del,\eta) > 0$ such that for $0 < \ve < \ve(\del,\eta)$ there is the inequality,
\be \label{P5}
\left| 1 - \frac 1{2\eta} \int^{t+\del + \eta}_{t+\del - \eta} dT \int^\infty_0 \rho(x,t,T)dx \right| < \del^{1/4}.
\ee
Let $\tau_{x,t}$ be the first hitting time at 0 for the diffusion process $X_\ve(s)$ associated with ${\mathcal L}^*_{s,\ve}$, where $X_\ve(t) = x$.  Then (\ref{P5}) is the same as
\be \label{Q5}
\left| 1 - \frac 1{2\eta} \int^\infty_0 P(t + \del -\eta < \tau_{x,t} < t + \del + \eta)dx \right| < \del^{1/4}.
\ee
Now (\ref{Q5}) will follow if we can estimate $P(\tau_{x,t} > T)$ sufficiently accurately for $T-t \sim \del$.  In particular we show that $P(\tau_{x,t} > T) \sim 1$ if $x$ is slightly larger than $T-t$ (depending on $\del$ and $\ve$) and $P(\tau_{x,t} > T) \sim 0$ if $x$ is slightly less than $T-t$.  To do this we proceed as in Lemma 4.1.  We consider the situation when $x$ is large.  For $Z_\ve(s)$ satisfying (\ref{N5}) let $\tau_x$ be the first exit time for the diffusion $Z_\ve(s)$ with $Z_\ve(t) = x$ from the interval $[x/2, 2x]$.  Then $P(\tau_{x,t} < T) \le P(\tau_x < T)$.  Now one has
\[	Z_\ve(s \wedge \tau_x) = x - [s \wedge \tau_x - t] + \int^{s \wedge \tau_x}_t \; \sqrt{2\ve} \ [1 + Z_\ve(s')/\ve]^{1/6} dW(s').   \]
It follows that provided $(T-t) < x/4$ there is the inequality,
\be \label{DA5} 	
P(\tau_{x} < T) \le \left(\frac 4 x \right)^2 \ 2\ve[1+ 2x/\ve]^{1/3} \ (T-t).	
\ee
Hence one can choose $\ve(\del)$ so that if $\ve < \ve(\del)$ the integral in (\ref{Q5}) over $x > 8\del$ is negligible.  For $x < 8\del$ we see just as in the derivation of (\ref{L4}) that
\be \label{DB5} 	
\lim_{\ve \ra 0} P\left(\tau_{x,t} > t + \eta' + x [1 + K\del^{1/3}] \right) = 0,  
\quad    \lim_{\ve \ra 0} P(\tau_{x,t} < t - \eta' + x) = 0,  
\ee
for any $\eta' > 0$, where $K$ is a constant independent of $\ve$ and $\del$.  The inequality (\ref{Q5}) follows from  (\ref{DA5}) and (\ref{DB5}).

To obtain the pointwise estimate (\ref{O5}) we use the fact that $\rho_\ve(x,t,T)$ satisfies the equation $\pa \rho_\ve/\pa t + {\mathcal L}^*_{t,\ve} \rho_\ve = 0$, $\rho_\ve(0,t,T) = 0$.  Let $c_\ve(x,s), s > t$, satisfy the adjoint equation $\pa c_\ve/\pa s = {\mathcal L}_{s,\ve} c_\ve$, $c_\ve(0,s) = 0, s > t,  \ c_\ve(x,t) = 1, x > 0$.  Then one has that
\be \label{R5}
\int^\infty_0 \rho_\ve(x,t,T)dx = \int^\infty_0 \rho_\ve(x,s,T)c_\ve(x,s)dx,  \  \ s > t.
\ee
From (\ref{J1}) we see that $c_\ve(x,t)$ is given by the expectation value,
\be \label{S5}
c_\ve(x,s) = E\left[\exp\left\{ - \int^s_t \frac 2{9\ve} \Big( 1 + X_\ve(s')/\ve \Big)^{-5/3} + \frac 1{3\big( X_\ve(s')^2\; L_\ve(s')\big)^{1/3}} ds' \right\}\ ; \ \tau_{x,s} < t \right],
\ee
where $X_\ve(s')$ is the diffusion process satisfying
\begin{multline}  \label{AF5}
dX_\ve(s') = - \left\{ 1 - \left[ \frac{X_\ve(s')}{L_\ve(s')} \right]^{1/3} + \frac 2 3 \left(1 + \frac{X_\ve(s')}{\ve}\right)^{-2/3} \right\} ds'  \\
+ \sqrt{2\ve} \ \left( 1 + X_\ve(s')/\ve \right)^{1/6} \ dW(s'), \ s' < s, 
\end{multline}
with $X_\ve(s) = x$.  Note that the process $X_\ve(s')$ is running backwards in time.  The stopping time $\tau_{x,s}$ is the first hitting time for the process on the boundary $x = 0$.  From (\ref{S5}) we see that $c_\ve(x,s) < 1, \; s > t$.  Hence by taking $T - s \sim \ve$ we can conclude from Lemma 5.2 that the LHS of (\ref{R5}) is uniformly bounded as $\ve \ra 0$ for any fixed $t < T$.
Let $\rho'_\ve (x,t,T)$ be the density for the diffusion $Z_\ve$ which corresponds to the density $\rho_\ve(x,t,T)$ for $X_\ve$.  Using the time translation invariance of $Z_\ve$ we have from the inequality (\ref{P5}) for $\rho'_\ve$ that
\be \label{T5}
\left| 1 - \frac 1{2\eta} \int^{T-\del + 2\eta}_{T-\del} ds \int^\infty_0 \rho'_\ve(x,s,T)dx \right| < \del^{1/4} ,
\ee
for $0 < \ve < \ve(\del,\eta)$.  Now from equation (\ref{R5}) for $\rho'_\ve$ one has that
\be \label{U5}
\int^\infty_0 \rho'_\ve(x,T-\del,T)dx  = \frac 1{2\eta} \int^{T-\del + 2\eta}_{T-\del} ds \int^\infty_0 \rho'_\ve(x,s,T) c'_\ve(x,s)dx,
\ee
where $c'_\ve$ is given by an expectation similar to (\ref{S5}).  Using the fact that the integral on the RHS of (\ref{U5}) is concentrated at $x \sim \del$ and (\ref{S5}) for $c'_\ve$, we conclude from (\ref{T5}) that one can choose $\ve(\del)$ such that for $0 < \ve < \ve (\del)$ one has the pointwise in time estimate
\[	| 1 - \int^\infty_0 \rho'_\ve(x, T-\del, T)dx | < \del^{1/4}.   \]
The inequality (\ref{O5}) follows if we can show that $\rho'_\ve(\cd, T-\del, T)$ and $\rho_\ve(\cd, T-\del, T)$ are close.

We first compare $\rho_\ve(x, t, T)$ and $\rho'_\ve(x,t,T)$ when $T-t = c_1\ve$ as in Lemma 5.2.  To do this we write $\rho_\ve(x, t, T)$ as a sum over walks on the numbers $0, 1/2, 1, 2, ..., 2^N$, where $2^N \ve \sim 1$.  For a walk $X(n), n = 0,1,2,...$ we take $X(0) > 0$.  If $0 < X(n) < 2^N$ then $X(n+1)$ can be either of the neighbors of $X(n)$.  If $X(n) = 2^N$ then $X(n+1) = 0$.  Finally let $\tau$ be defined by $X(\tau) = 0$ and $X(n) > 0,  \ n < \tau$.  Then for $x = 2^n\ve$ for some integer $n$ with $-1 \le n \le N$ we have that
\begin{multline} \label{V5}   
\rho_\ve(x, t, T) = \begin{array}[t]{c}{\displaystyle\sum}\\ {\scriptstyle\{ walks\ X(\cdot):X(0) = x/\ve\}} \end{array}
\begin{array}[t]{c}{\displaystyle\int}\\ {\scriptstyle s_0< s_1 < s_2 .. <s_\tau} \end{array} \del(s_0 - t)\del(s_\tau - T)
\\
  ds_0 ds_1...ds_\tau \ \prod^\tau_{n=1} \ \rho_\ve \left(\ve X(n-1), \ve X(n), s_{n-1}, s_n \right).  
\end{multline}
Here $\rho_\ve(x, x', s, s')$ is the density at $x'$ at time $s'$ for the diffusion $X_\ve(\cd)$ of (\ref{E4}) with $X_\ve(s) = x$ exiting the interval $I_x$ where $I_x = [x/2, 2x]$ if $x =2^n\ve$ with $0 \le n < N, \ I_x = [0,\ve]$ if $x = \ve/2$.  For $x = 2^N\ve$ then $x' = 0$ and $\rho_\ve(x, x', s, s') = \rho_\ve(x,  s, s')$.  Just as we derived (\ref{I5}) we see that there are positive universal constants $C_1,\; C_2$ such that
\be \label{W5}
\rho_\ve(x, x', s, s') \le \frac{C_1}{s'-s} \exp \left[ -C_2 x^{5/3} \big/ \ve^{2/3}(s' - s) \right].
\ee
It follows that we may assign transition probabilities on the walks $X(\cd)$ so that
\be \label{X5}
\rho_\ve(x, t, T) \le \frac C \ve \ E \left\{ \exp \left[ -\eta \sum^\tau_{n=0} X(n)^{5/3} \right] \big| X(0) = x/\ve \right\},
\ee
for some positive constants $C, \eta$.  This last inequality gives another proof of Lemma 5.2.  To compare $\rho_\ve$ and $\rho '_\ve$ we use the representation (\ref{V5}) and an interpolation formula.  Thus we have
\be \label{Y5}
\rho_\ve(x, t, T) - \rho'_\ve(x,t,T) = \int^1_0 d\lambda \begin{array}[t]{c}{\displaystyle\sum}\\ {\scriptstyle\{ walks\ X(\cdot):X(0) = x/\ve\}} \end{array}
\begin{array}[t]{c}{\displaystyle\int}\\ {\scriptstyle s_0< s_1 < s_2 .. <s_\tau} \end{array} \del(s_0 - t)\del(s_\tau - T)ds_0 ds_1...ds_\tau
\ee
\[	\prod^\tau_{n=1} \left[ \la\rho_\ve\Big( \ve X(n-1), \ve X(n), s_{n-1}, s_n \Big) + (1-\la)
\rho'_\ve \Big( \ve X(n-1), \ve X(n), s_{n-1}, s_n \Big) \right] \]
\[	\sum^\tau_{n=1} \left[ \rho_\ve\Big( \ve X(n-1), \ve X(n), s_{n-1}, s_n \Big) - \rho'_\ve \Big( \ve X(n-1), \ve X(n), s_{n-1}, s_n \Big) \right ] \]
\[  \Big/ \ \left[ \la\rho_\ve \Big( \ve X(n-1), \ve X(n), s_{n-1}, s_n \Big) + (1-\la) \rho'_\ve \Big( \ve X(n-1), \ve X(n), s_{n-1}, s_n \Big) \right]   .\]
Now using the perturbation method of Lemma 5.1 we see that
\be \label{Z5}
| \rho_\ve(x,x',s,s') - \rho'_\ve(x,x',s,s')| \le \frac{C_1x^{1/3}}{s'-s} \ \exp\left[ -C_2x^{5/3} \big/ \ve^{2/3}(s'-s) \right],
\ee
provided $x = 2^n\ve$ for any integer $n, \ -1 \le n < N$.  Since $2^N\ve \sim 1$ the inequality (\ref{Z5}) also holds for $x = 2^N\ve$.  This follows from (\ref{W5}).  Just as we derived (\ref{X5}) we have from (\ref{Z5}) that
\[   | \rho_\ve(x,t,T) - \rho'_\ve(x,t,T) | \le \frac C{\ve^{2/3}} E \left\{ \sum^\tau_{n=1} X(n)^{1/3} \exp \left[ -\eta \sum^\tau_{n=1} X(n)^{5/3} \right] \big| X(0) = x/\ve \right\},   \]
whence it follows that
\be \label{AA5}
 | \rho_\ve(x,t,T) - \rho'_\ve(x,t,T) | \le \frac {C_1}{\ve^{2/3}}  \exp \left[-C_2(x/\ve)^{5/3} \right] 
\ee
provided $T-t = c_1\ve$ and $x < O(1)$.  For $x > O(1)$ the inequality (\ref{AA5}) follows from Lemma 5.2.

We shall now use (\ref{AA5}) and (\ref{R5}) to compare the integrals of $\rho_\ve(\cd,t,T)$ and $\rho'_\ve(\cd,t,T)$ 
when $T-t = \del$.  In fact we have
\begin{eqnarray} \label{AB5}
&\int^\infty_0 & \rho_\ve(x,t,T) - \rho'_\ve(x,t,T) dx = \\
&\int^\infty_0 & \left[ \rho_\ve(x,T-c_1\ve,T) - \rho'_\ve(x,T-c_1\ve,T)\right] c_\ve(x, T-c_1\ve) dx \nn \\
+ &\int^\infty_0 & \rho'_\ve(x,T-c_1\ve,T)\left[  c_\ve(x,T-c_1\ve) - c'_\ve(x, T-c_1\ve) \right] dx \nn,
\end{eqnarray}
where $c'_\ve$ is the analogous expectation for the diffusion $Z_\ve$ which corresponds to $c_\ve$ for $X_\ve$.  Since $c_\ve(x, T-c_1\ve) < 1$ it follows from (\ref{AA5}) that the first integral on the RHS of (\ref{AB5}) is small.  We shall show the second integral is also small by obtaining pointwise estimates on the differences $c'_\ve(x,s) - c_\ve(x,s)$ for $s \sim \del + t$ and $x > 0$.  In fact, in view of Lemma 5.2, we may restrict ourselves to obtaining an estimate on the difference when $x = O(\ve)$.

To carry this out we consider the representation for $c'_\ve$ analogous to the representation (\ref{S5}) for $c_\ve$.  Let $Y_\ve(s')$ be the diffusion process satisfying
\be \label{AC5}
dY_\ve(s') = -\left\{ 1 + \frac 2 3 \left( 1 + \frac{Y_\ve(s')}{\ve} \right)^{-2/3} \right\} ds' + \sqrt{2\ve} \ \left( 1 +Y_\ve(s')/\ve \right)^{1/6} \; dW(s'),  \ s' < s,
\ee
with $Y_\ve(s) = x$.  If we allow the diffusions $X_\ve(s')$ of (\ref{AF5}) and $Y_\ve(s') $ of (\ref{AC5}) to be driven backwards in time by the same Brownian motion $dW(s')$ then it is clear that if $X_\ve(s) = Y_\ve(s) = x$ then $X_\ve(s') \le Y_\ve(s') , s' < s$.   Thus if $\tau_{x,s}$ is the first hitting time at 0 for $X_\ve(s') $ with $X_\ve(s) = x$ and $\tau'_{x,s}$ is similarly defined for $Y_\ve(s')$, then $\tau_{x,s} \ge \tau'_{x,s}$.  Now similarly to (\ref{S5}) we have the representation,
\be \label{AD5}
c'_\ve(x,s) = E \left[ \exp \left\{ - \int^s_t \frac 2{9\ve} \left( 1 + Y_\ve(s')/\ve\right)^{-5/3} ds' \right\};  \ \tau'_{x,s} < t \right].
\ee
If we compare (\ref{S5}), (\ref{AD5}) and use the fact that $\tau_{x,s} \ge \tau '_{x,s}$, $X_\ve(s') \le Y_\ve(s'), s' \le s$, we see that $c_\ve(x,s) \le c'_\ve(x,s)$.  We also have that 
\begin{multline*}
c'_\ve(x,s) - c_\ve(x,s) 
= E\left[ \exp \left\{ - \int^s_t \frac 2{9\ve} \left( 1 + Y_\ve(s')/\ve \right)^{-5/3} ds' \right\} \; ; \; \tau'_{x,s} < t, \tau_{x,s} > t\right] \\
+  E\bigg[ \exp \left\{ - \int^s_t \frac 2{9\ve} \left( 1 + Y_\ve(s')/\ve \right)^{-5/3} ds' \right\} \\
-  \exp \left\{ - \int^s_t \frac 2{9\ve} \left( 1 + X_\ve(s')/\ve \right)^{-5/3}  + \frac 1{3[X_\ve(s')^2 L_\ve(s')]^{1/3}}ds' \right\} \; ; \; \tau_{x,s} < t \bigg].
\end{multline*}
We conclude from the previous inequality that
\begin{multline} \label{AE5}
0 \le c'_\ve(x,s) - c_\ve(x,s) \le P\left( \tau'_{x,s} < t, \ \tau_{x,s} > t \right) 
+ E \left[ \int^s_t  \frac {ds'}{3[X_\ve(s')^2 L_\ve(s')]^{1/3}} \; ; \; \tau_{x,s} < t \right]  \\
+ E \left[  \frac 2{9\ve} \; \int^s_t \left[ 1 + X_\ve(s')/\ve \right]^{-5/3}  - \left[ 1 + Y_\ve(s')/\ve \right]^{-5/3} ds'  \; ; \; \tau_{x,s} < t \right]. 
\end{multline}
We shall estimate each term on the RHS of (\ref{AE5}).
Note that for the purposes of doing this estimate we may replace $L_\ve(s'), \ t \le s' \le t + \del$, on the RHS of (\ref{AF5}) by $L_0=\inf_{t \le s' \le t + \del}  L_\ve(s')$.

We consider the first term which is the same as $P\left( \tau'_{x,s} < t\right) - P\left( \tau_{x,s} < t \right)$.  We shall estimate this by obtaining an upper bound on $P\Big( \tau'_{x,s} < t\Big)$ and a lower bound on $P\left( \tau_{x,s} < t \right)$.  We have now that for $0 < x < \del^2$, 
\begin{eqnarray} \label{AG5}
P\left( \tau'_{x,s} < t\right) &\le& P\left( Y_\ve(s') \ {\rm exits} \ [0,\del^2] \ {\rm through} \ \del^2 \  | Y_\ve(s) = x \right) \\
&+& P\left( {\rm Time \ to \ exit} \ [0,\del^2] \ {\rm larger \ than} \  \ \del/2 \  | Y_\ve(s) = x \right). \nn
\end{eqnarray}
Letting $u_\ve(x)$ be the first probability on the RHS of (\ref{AG5}) we have that
\begin{multline}  \label{AH5}
\ve(1+x/\ve)^{1/3} u''_\ve(x) + \left[ 1 + \frac 2 3 (1+x/\ve)^{-2/3} \right] u'_\ve(x) = 0, \\
 0 < x < \del^2,  \ u_\ve(0) = 0, \  u_\ve (\del^2) = 1.
\end{multline}
The solution to (\ref{AH5}) is given by the formula,
\begin{eqnarray} \label{EA5}
u_\ve(x) &=& \int^x_0 \ h_\ve(x')dx' \big/ \int^{\del^2}_0 \ h_\ve(x')dx', \\
h_\ve(x) &=& \big(1+ x/\ve\big)^{-2/3} \ \exp \left[ - \frac 3 2 \; \big(1+ x/\ve\big)^{2/3} \right]. \nn
\end{eqnarray}
Observe that $u_\ve(x) \simeq 1$ for $x > O(\ve)$.  We estimate the second probability on the RHS of (\ref{AG5}) by calculating the expected time to exit $[0, \del^2]$.  Letting $\tau_x$ be the time to exit and $v_\ve(x) = E[\tau_x]$ then $v_\ve(x)$ satisfies 
\begin{multline}  \label{AI5}
-\ve(1+x/\ve)^{1/3} v''_\ve(x) - \left[ 1 + \frac 2 3 (1+x/\ve)^{-2/3} \right] v'_\ve(x) = 1, \\
 0 < x < \del^2, \  v_\ve(0)=v_\ve (\del^2) = 0.
\end{multline}
We can write $v_\ve(x)$ as an integral w.r. to the Dirichlet Green's function $G_\ve(x,y)$ for the equation (\ref{AI5}).  Thus we have 
\[  v_\ve(x) = \int^{\del^2}_0 \ G_\ve(x,y)dy, \]
where $G_\ve(x,y)$ is given by the formula,
\be \label{AJ5}
G_\ve(x,y) = \left[ \int^x_0 h_\ve(x')dx'\right] \left[ \int^{\del^2}_y h_\ve(x')dx'\right] \bigg/ \ve(1+ y/\ve)^{1/3}h_\ve(y) \int^{\del^2}_0 h_\ve(x')dx',  
\ee
if $x<y$, and
\be \label{CA5}
G_\ve(x,y) = \left[ \int^{\del^2}_x h_\ve(x')dx'\right] \left[ \int^y_0 h_\ve(x')dx'\right] \bigg/ \ve(1+ y/\ve)^{1/3}h_\ve(y) \int^{\del^2}_0 h_\ve(x')dx',
\ee
if $x>y$. 
In (\ref{AJ5}) and (\ref{CA5}) the function $h_\ve(x)$ is given by (\ref{AI5}).  Observe that upon integration by parts,
\be \label{AK5}
\int^{\del^2}_y h_\ve(x')dx' = \ve(1+ y/\ve)^{1/3}h_\ve(y)  - \ve(1+ \del^2/\ve)^{1/3}h_\ve(\del^2) - \frac 1 3 \int^{\del^2}_y (1+ x'/\ve)^{-2/3}h_\ve(x') dx'.
\ee
Hence we see from (\ref{AJ5})  that $G_\ve(x,y) < 1$ if $x < y$.  Using the fact that $(1 + x/\ve)^{1/3} h_\ve(x)$ is a decreasing function, we similarly see from (\ref{CA5}) that $G_\ve(x,y) < 1$ for $x>y$.  Thus $v_\ve(x) < \del^2$, whence by Chebyshev the second term on the RHS of (\ref{AG5}) is smaller than 
$2\del$.

To obtain a lower bound on $P(\tau_{x,s} < t)$ observe first that 
\begin{multline} \label{AL5}
P\Big( \tau_{x,s} < t\Big) \ge P\left( X_\ve(s') \ {\rm exits} \ [0,\sqrt{\del}] \ {\rm through} \ \sqrt{\del} \  | 
X_\ve(s) = x \right) 
\\
\times P \left( {\rm Time \ to \ exit} \ [0,2\sqrt{\del}] \ {\rm is \ larger \ than} \   \del \ | X_\ve(s)= \sqrt{\del}\right). 
 \end{multline}
Letting $u_\ve(x)$ be the first probability on the RHS of (\ref{AL5}) we have that
\be \label{AM5}
u_\ve(x) \ge \int^x_0 h_\ve(x')dx' \Big/ \int^{\sqrt{\del}}_0 h_\ve(x') \exp\left[ \frac 3 2\left(1+ \frac{x'}{\ve}\right)^{2/3} \left(\frac{x'}{L_0}\right)^{1/3} \right]dx'.
\ee
It is evident that the difference between the RHS of (\ref{EA5}) and the RHS of (\ref{AM5}) converges to zero as $\ve \ra 0$ provided $0 < x < \del^2/2$.  To estimate the second probability on the RHS of (\ref{AL5}) we let $\tau_x$ be the time to exit $[0, 2\sqrt{\del}]$ and put $v_\ve(x) = E[\tau_x]$.  Arguing as we did previously we have that
\be \label{AN5} 
\lim_{\ve \ra 0} v_\ve(\sqrt{\del}) = \int^{2\sqrt{\del}}_{\sqrt{\del}} \ \frac{dx}{1-(x/L_0)^{1/3}} .
\ee
Similarly if we set $w_\ve(x) = E[\tau^2_x]$ then 
\be \label{AO5}
\lim_{\ve \ra 0} w_\ve (\sqrt{\del}) = \left[ \int^{2\sqrt{\del}}_{\sqrt{\del}} \ \frac{dx}{1-(x/L_0)^{1/3}}\right]^2 .
\ee
It follows from (\ref{AN5}), (\ref{AO5}) that the second probability on the RHS of (\ref{AL5}) converges to 1 as $\ve \ra 0$.  We conclude that the first term on the RHS of (\ref{AE5}) is bounded by 4$\del$ as $\ve \ra 0$, uniformly in $x$ for $0 < x < \del^2/2$.

We turn to the second term on the RHS of (\ref{AE5}).  Letting $\tau_x$ be the exit time of $X_\ve(s')$ with $X_\ve(s) = x$ from the interval $[0, 2\sqrt{\del}]$, then if we put
\[	v_\ve(x) = E\left[ \int^{s}_{s-\tau_x} \ \frac{ds'}{X_\ve(s')^{2/3}} \Big| X_\ve(s) = x \right],  \]
one has that $v_\ve(x)$ satisfies
\begin{multline}  \label{AP5}
-\ve(1+x/\ve)^{1/3} v''_\ve(x) - \left[ 1 - \left( \frac x{L_0} \right)^{1/3} + \frac 2 3 (1 + x/\ve)^{-2/3} \right] v'_\ve(x) = 1/x^{2/3},
\\
   0 < x < 2\sqrt{\del}, \ v_\ve(0) = v_\ve(2\sqrt{\del}) = 0.	
\end{multline}
Hence $v_\ve(x)$ is given in terms of the Dirichlet Green's function for (\ref{AP5}) by 
\[	v_\ve(x) = \int^{2\sqrt{\del}}_0 \ G_\ve(x,y) \Big/ y^{2/3} \ dy.	\]
Arguing as we did for the Green's function (\ref{AJ5}) we see that for $\del$ sufficiently small $G_\ve(x,y) \le 2$, whence $v_\ve(x) \le 12\del^{1/6}$.  To bound the second term on the RHS of (\ref{AE5}) we need to add to it paths for which $\tau_x < \del$ and $X_\ve(\tau_x) = 2\sqrt{\del}$.  Such a path can eventually wander back to $x = \sqrt{\del}$ and then we can estimate by $v_\ve(\sqrt{\del})$ the contribution to the integral of $X_\ve(s')^{-2/3}$ until the path again hits $2\sqrt{\del}$.  Thus the second term on the RHS of (\ref{AE5}) is bounded by
\begin{multline} \label{AQ5}
 \frac 1{3L^{1/3}_0}\Big[v_\ve(x) +  v_\ve(\sqrt{\del}) + P(\tau_{\sqrt{\del}} < \del) v_\ve(\sqrt{\del}) \\
+ \left\{ \sum^\infty_{k=2} \ P\big(\tau_{\sqrt{\del}} < \del\big)^k \right\} v_\ve(\sqrt{\del})+ \del \Big/ (\sqrt{\del})^{2/3} \Big] \le K\del^{1/6}, 
\end{multline}
for some constant $K$.  Note the last term on the RHS of (\ref{AP5}) is to take account of the integral of $X_\ve(s')^{-2/3}$ when $X_\ve(s') > \sqrt{\del}$.  We are also using (\ref{AN5}), (\ref{AO5}) to see that $P(\tau_{\sqrt{\del}} < \del) < 1/2$ for small $\ve$.

We consider the final term on the RHS of (\ref{AE5}).  Let $\tau_x$ be the time taken for the diffusion $X_\ve(s'), s'<s$, with $X_\ve(s) = x$ to exit the interval $[0, 2 \sqrt{\del}]$, and let $\tau'_x$ be the corresponding time for $Y_\ve(s'), s'<s$.  Since $X_\ve(s') \le Y_\ve(s')$, $s' \le s$, it follows that for a path $X_\ve(s'), s' \le s$, with $X_\ve(s-\tau_x) = 2\sqrt{\del}$ then $\tau'_x < \tau_x$.  Hence the final term on the RHS of (\ref{AE5}) is bounded by
\begin{eqnarray} \label{AR5}
&\ & E \left[ \frac 2{9\ve} \int^s_{s-\tau_x} \left[ 1+X_\ve(s')/\ve\right]^{-5/3} \; ds' \; ; \; \tau_x > \del, X_\ve(s-\tau_x)=0\right]\\
&+ & E \left[ \frac 2{9\ve} \int^s_{s-\tau_x} \left[ 1+X_\ve(s')/\ve\right]^{-5/3} \; ds' \; ; \;  X_\ve(s-\tau_x)=2\sqrt{\del} \right] \nn\\
&- & E \left[ \frac 2{9\ve} \int^s_{s-\tau'_x} \left[ 1+Y_\ve(s')/\ve\right]^{-5/3} \; ds' \; ; \;  Y_\ve(s-\tau '_x)=2\sqrt{\del} \right] \nn \\
&+ & E \left[ \frac 2{9\ve} \int^s_{t} \left[ 1+X_\ve(s')/\ve\right]^{-5/3} \; ds' \; ; \; \tau_{2\sqrt{\del},s} < t |  X_\ve(s)=2\sqrt{\del} \right] \ . \nn
\end{eqnarray}
Letting $u_\ve(x)$ be the first term in (\ref{AR5}) we see that
\begin{multline} \label{AS5}
-\ve(1+x/\ve)^{1/3} u''_\ve(x) - \left[ 1 - \left( \frac x{L_0} \right)^{1/3} + \frac 2 3 (1+x/\ve)^{-2/3} \right] u'_\ve(x) \\
= \frac 2{9\ve} (1+x/\ve)^{-5/3} P\left( \tau_x > \del, \ X_\ve(s - \tau_x) = 0\right),  \\
0 < x < 2\sqrt{\del}, \ u_\ve(0) = u_\ve(2\sqrt{\del}) = 0. 
\end{multline}
Next we put $v_\ve(x) = E[ \tau_x \; ; \; X_\ve(s-\tau_x) = 0]$.  Then $v_\ve(x)$ satisfies
\begin{multline} \label{AT5}
- \ve(1+x/\ve)^{1/3} v''_\ve (x)- \left[ 1 - \left( \frac x{L_0} \right)^{1/3} + \frac 2 3 (1+x/\ve)^{-2/3} \right] v'_\ve(x) = P\Big( X_\ve(s-\tau_x) = 0\Big),  \\
0 < x < 2\sqrt{\del}, \ v_\ve(0) = v_\ve(2\sqrt{\del}) = 0. 
\end{multline}
We can see from (\ref{EA5}) that $P\big( X_\ve(s-\tau_x) = 0\big) << 1$ if $x > O(\ve)$.  Hence from our estimates on the Green's function (\ref{AJ5}) we conclude from (\ref{AT5}) that $v_\ve(x) \le C\ve$ for $0 < x < 2\sqrt{\del}$, where $C$ is a constant.  Now using the Chebyshev inequality to bound the RHS of (\ref{AS5}) we see that $u_\ve(x) \le C\ve/\del$, $0 < x < 2\sqrt{\del}$.

Next we consider the second and third terms in (\ref{AR5}).  The sum of these two terms is given by $u_\ve(x) - v_\ve(x)$ where $u_\ve(x)$ satisfies
\begin{multline} \label{AU5}
- \ve(1+x/\ve)^{1/3} u''_\ve(x) - \left[ 1 - \left( \frac x{L_0} \right)^{1/3} + \frac 2 3 (1+x/\ve)^{-2/3} \right] u'_\ve(x) \\
 = \frac 2{9\ve} (1 + x/\ve)^{-5/3} P\big( X_\ve(s-\tau_x)  = 2\sqrt{\del}\big), \\
 0 < x < 2\sqrt{\del}, \ u_\ve(0) = u_\ve(2\sqrt{\del}) = 0, 
\end{multline}
and $v_\ve(x)$ satisfies
\begin{multline} \label{AV5}
-\ve(1+x/\ve)^{1/3} v''_\ve(x) - \left[ 1 + \frac 2 3 (1+x/\ve)^{-2/3} \right] v'_\ve(x) \\
= \frac 2{9\ve} (1+x/\ve)^{-5/3} P\left( Y_\ve(s - \tau '_x) = 2\sqrt{\del}\right), \\
 0 < x < 2\sqrt{\del}, \ v_\ve(0) = v_\ve(2\sqrt{\del}) = 0. 
\end{multline}
It is easy to see from (\ref{EA5}) that
\[  | P\big( X_\ve(s-\tau_x)  = 2\sqrt{\del}\big) - P\big( Y_\ve(s-\tau_x) = 2\sqrt{\del}\big) \ | \le C\ve^{1/3}.  \]
We may also easily estimate the difference in the Green's functions (\ref{AJ5}) for (\ref{AU5}) and (\ref{AV5}) to conclude that $|u_\ve(x) - v_\ve(x)| \le C\ve^{1/3}$ for $0 < x < 2\sqrt{\del}$.

We estimate the last term in (\ref{AR5}) in a similar way to how we estimated the second term on the RHS of (\ref{AE5}).  We see that it is bounded by $\ve^{2/3} / \del^{1/3}$.  We have therefore appropriately bounded the third term on the RHS of (\ref{AE5}), whence the result follows.  
\end{proof}
\begin{theorem}  Let $c_0(x), c_0(x,t), c_\ve(x,t)$ be as in Theorem 4.1 and $F(x,t)$ be the mapping defined after (\ref{A4}).  If the function $c_0(\cdot)$ is continuous at $x = F(0, T)$ then (\ref{A5}) holds.
\end{theorem}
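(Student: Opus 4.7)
The plan is to combine Lemma 5.1 with the Markov property of the absorbing diffusion that underlies (\ref{J1}), and to exploit the concentration estimates of Lemmas 5.2 and 5.3 to localize the resulting first-passage integral. By Lemma 5.1, the derivative on the LHS of (\ref{A5}) exists and equals $-\int_0^\infty \rho_\ve(x,0,T)\,c_0(x)\,dx$. Since (\ref{J1}) is the forward Kolmogorov equation (with absorbing boundary at $0$) for the process $X_\ve$ of (\ref{E4}) driven by the self--consistent $L_\ve(\cd)$, one has $c_\ve(y,T-s)=\int_0^\infty p_\ve(x,y,0,T-s)\,c_0(x)\,dx$ for the corresponding Dirichlet transition kernel $p_\ve$, and the Chapman--Kolmogorov identity $\rho_\ve(x,0,T)=\int_0^\infty p_\ve(x,y,0,T-s)\,\rho_\ve(y,T-s,T)\,dy$ together with Fubini yields the key semigroup identity
\be \label{keyid5}
\int_0^\infty \rho_\ve(x,0,T)\,c_0(x)\,dx \;=\; \int_0^\infty \rho_\ve(y,T-s,T)\,c_\ve(y,T-s)\,dy, \quad 0<s<T.
\ee

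For fixed small $s>0$, let $y_s^*$ denote the position at time $T-s$ of the classical characteristic (\ref{A4}) that reaches $0$ at time $T$; the boundary value $dx/ds'=-1$ at $x=0$ gives $y_s^*=s+O(s^{4/3})$. Lemmas 5.2 and 5.3, combined with the deterministic-limit argument in (\ref{DB5}), show that the probability measure $\rho_\ve(y,T-s,T)\,dy$ concentrates at $y_s^*$ as $\ve\to 0$, with total mass $1+O(s^{1/10})$. Differentiating (\ref{B4}) yields the identity $c_0(y,T-s)=c_0(F(y,T-s))\,\pa_y F(y,T-s)$, and since the characteristic through $(0,T)$ passes through $(y_s^*,T-s)$ one has $F(y_s^*,T-s)=F(0,T)$; the hypothesis that $c_0$ is continuous at $F(0,T)$ therefore transfers to continuity of $c_0(\cd,T-s)$ at $y_s^*$. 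Combined with the weak convergence $c_\ve(\cd,T-s)\,dx\to c_0(\cd,T-s)\,dx$ implied by Theorem 4.1, the RHS of (\ref{keyid5}) converges as $\ve\to 0$ to $c_0(y_s^*,T-s)\,[1+O(s^{1/10})]$. Letting then $s\to 0$, the identity $X(T;F(0,T))=0$ together with $\pa_t X=-1$ at the boundary gives (by the variational equation) $\pa_T F(0,T)=\pa_y F(0,T)$, so $c_0(y_s^*,T-s)\to c_0(F(0,T))\,\pa_T F(0,T)$; and this equals $-\tfrac{d}{dT}\int_0^\infty c_0(x,T)\,dx$ by direct differentiation of the identity $\int_0^\infty c_0(x,T)\,dx=\int_{F(0,T)}^\infty c_0(x)\,dx$ from (\ref{B4}).

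The main obstacle is justifying the passage $\ve\to 0$ at fixed $s$ in (\ref{keyid5}): the ``test function'' $c_\ve(\cd,T-s)$ depends on $\ve$, while Theorem 4.1 only furnishes convergence of the survival functions $W_\ve(y,t)=\int_y^\infty c_\ve(y',t)\,dy'$. I would handle this by integration by parts,
$$
\int_0^\infty \rho_\ve(y,T-s,T)\,c_\ve(y,T-s)\,dy \;=\; \int_0^\infty \frac{\pa \rho_\ve}{\pa y}(y,T-s,T)\,W_\ve(y,T-s)\,dy,
$$
with vanishing boundary terms since $\rho_\ve(0,\cd,\cd)=0$ and both factors decay at infinity. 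The perturbation-series construction of Lemma 5.1, in the same spirit as the derivation of (\ref{I5}) in Lemma 3.3, controls $\pa_y\rho_\ve$ by the Gaussian envelope of Lemma 5.2; meanwhile $W_\ve(\cd,T-s)$ is monotone and converges pointwise by Theorem 4.1 to a limit continuous at $y_s^*$, which forces uniform convergence on compact neighborhoods of $y_s^*$. Dominated convergence then closes the argument, after which letting $s\to 0$ yields (\ref{A5}).
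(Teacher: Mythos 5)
Your overall skeleton — existence of the derivative via Lemma 5.1, the first–passage density $\rho_\ve$, its concentration (Lemma 5.2) and near-unit mass (Lemma 5.3) — is shared with the paper, and your semigroup identity is a legitimate analogue of (\ref{R5}) with initial data $c_0$ in place of $1$. But the route you take at the decisive step is different from the paper's, and it has a genuine gap. You move the pairing to time $T-s$ and must evaluate $\int_0^\infty \rho_\ve(y,T-s,T)\,c_\ve(y,T-s)\,dy$, i.e.\ a pairing of \emph{two} $\ve$-dependent objects: a first–passage kernel that sharpens into a point mass at $y_s^*$, and the evolved density $c_\ve(\cdot,T-s)$, for which Theorem 4.1 gives only convergence of the survival functions $W_\ve$, with no rate. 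Your proposed fix — integrate by parts to $\int \pa_y\rho_\ve\,W_\ve\,dy$, bound $\pa_y\rho_\ve$ by a Gaussian envelope, and invoke dominated convergence — does not close: as $\ve\to 0$ the kernel sharpens, so $\int_0^\infty|\pa_y\rho_\ve(y,T-s,T)|\,dy$ diverges (like the reciprocal of the bump width), and there is no $\ve$-independent integrable dominating function. If one instead splits $W_\ve=W_0(y_s^*)+[W_0-W_0(y_s^*)]+[W_\ve-W_0]$, the last error term is bounded only by $\|\pa_y\rho_\ve\|_{L^1}\cdot\sup|W_\ve-W_0|$, a product of a quantity tending to infinity and one tending to zero with no rate relation; weak convergence of $c_\ve$ together with weak concentration of $\rho_\ve$ simply does not determine the limit of their pairing (an $\ve$-dependent spike of vanishing mass in $c_\ve$ at the scale of the kernel width would change the answer without affecting Theorem 4.1).

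The paper's proof is organized precisely to avoid ever pairing the concentrating kernel with an $\ve$-dependent density. It keeps the test at time $0$: the measure $\rho_\ve(\cdot,0,T)\,dx$ is paired with the \emph{fixed} initial datum $c_0$, so concentration at $x=F(0,T)$ plus the assumed continuity of $c_0$ there yields (\ref{AW5}) directly. The remaining factor, the total mass $\int_0^\infty\rho_\ve(x,0,T)\,dx$, is computed in (\ref{BB5}) via the duality (\ref{R5}) with data $\equiv 1$ and the Feynman--Kac representation (\ref{S5}), (\ref{AF5}); there the function multiplying $\rho_\ve(\cdot,T-\del,T)$ is an explicit expectation of an exponential functional (not the evolved density), for which pointwise, locally uniform convergence as $\ve\to0$ can be proved by comparing the stochastic and deterministic trajectories. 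To rescue your approach you would need some strengthened, quantitative local control of $c_\ve(\cdot,T-s)$ near $y_s^*$ (e.g.\ equicontinuity or a rate in Theorem 4.1), which is neither available in the paper nor supplied in your argument; as written, the passage from the semigroup identity to $c_0(y_s^*,T-s)\,[1+O(s^{1/10})]$ is unjustified.
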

\begin{proof} We use the fact that
\[	\frac d{dT} \int^\infty_0 \ c_\ve(x,T)dx = \int^\infty_0 \ \rho_\ve(x,0,T) c_0(x)dx,	\]
where $\rho_\ve$ is as in Lemma 5.3.  We first show that 
\be \label{AW5}
\lim_{\ve \ra 0} \; \int_0^\infty \rho_\ve(x,0,T) c_0(x)dx = c_0(F(0,T)) \lim_{\ve \ra 0} \; \int_0^\infty \rho_\ve(x,0,T)dx.  
\ee
To do this we use the representation,
\be \label{AX5}
\rho_\ve(x,0,T) = E\left[ \rho(X_\ve(t), t, T) \ ; \ \tau_{x,0} > t \  | X_\ve(0) = x \right],
\ee
where $X_\ve(s)$ is a solution to (\ref{E4}) with
 $X_\ve(0) = x$ and $\tau_{x,0}$ is the first hitting time at $0$.  We choose now $t = T-c_1\ve$ so we can bound $\rho_\ve(\cd, t, T)$ from Lemma 5.2.  It follows from Lemma 5.2 that for $x > F(0,T)$ then
\be \label{AY5}
\rho_\ve(x,0,T) \le \frac C \ve \left\{ \exp \left[ -(x/8\ve)^{5/3} \right] + P\left( X_\ve(t) < x/2 \; ; \; \tau_{x,0} > t\right) \right\}.
\ee
It is evident that the first term on the RHS of (\ref{AY5}) is small.  We shall show that the second term is also small provided $x >> 1 +O(T)$.  To do this let $\tau_x$ be the exit time for $X_\ve(s)$ with $X_\ve(0) = x$ from the interval $[x/2, 2x]$ and define $Y_\ve(s), s \ge 0$, by
\[	Y_\ve(s) = \int^{s\wedge \tau_x}_0 \ \sqrt{2\ve} \big(1 + X_\ve(s')/\ve\big)^{1/6} dW(s').	\]
Observe now that for any $\la, M_{\la,\ve}(s)$ defined by 
\[  M_{\la,\ve}(s) = \exp \left[ -\ve\la^2 \int^{s\wedge \tau_x}_0  \big(1 + X_\ve(s')/\ve\big)^{1/3} ds' + \la Y_\ve(s) \right],                 \ \  s\ge 0,   \]
is a Martingale.  Hence for any $a > 0$ there is the inequality,
\begin{eqnarray*}
P \Big( 
\begin{array}[t]{c}
{\displaystyle\sup}\\
 {\scriptstyle 0 \le s \le t}
\end{array} 
Y_\ve(s) > a \Big) &\le& 
P \Big( 
\begin{array}[t]{c}
{\displaystyle\sup}\\
 {\scriptstyle 0 \le s \le t}
\end{array} 
M_{\la,\ve}(s) \ge \exp \left[ -\ve \la^2 (1+2x/\ve)^{1/3}t + \la a \right] \Big) \\
&\le& \exp \left[ \ve \la^2(1+2x/\ve)^{1/3}t - \la a \right].
\end{eqnarray*}
Minimizing the RHS of the last inequality w.r. to $\la$ we conclude that
\[	P \Big( 
\begin{array}[t]{c}
{\displaystyle\sup}\\
 {\scriptstyle 0 \le s \le t}
\end{array} 
|Y_\ve(s)| > a \Big) \le 2\exp \left[ -a^2 / 4\ve(1 + 2x/\ve)^{1/3}) \right].
\]
Let $X^c_\ve(s)$ be the classical solution to (\ref{E4}) with $X^c_\ve(s) = x$, i.e. the solution to the deterministic equation when we eliminate the stochastic term.  Then from Gronwall's inequality we have that
\[  \begin{array}[t]{c}
{\displaystyle\sup}\\
 {\scriptstyle 0 \le s \le t}
\end{array} 
| X_\ve(s \wedge \tau_x) - X^c_\ve(s \wedge \tau_x)| \le
\begin{array}[t]{c}
{\displaystyle\sup}\\
 {\scriptstyle 0 \le s \le t}
\end{array} 
|Y_\ve(s) | \exp[ Ct/x^{2/3}],
\]
for some constant $C$.  If we use the fact that $X^c_\ve(s) \ge x - s$, we may conclude from the last two inequalities that
\[	P\left( X_\ve(t) < x/2 \ ; \; \tau_{x,0} > t \right) \le P(\tau_x < t) \le 2\exp\left[ -x^2 \exp(-Ct^{1/3}) \Big/ 64\ve(1+2x/\ve)^{1/3} \right],	\]
provided $x > 4t$.  Thus we obtain an estimate on the RHS of (\ref{AY5}) which falls off exponentially in $x/\ve$, and we can easily extend this to all $x > F(0,T) + \del$ for any $\del > 0$.

Next we consider the case $0 < x < F(0,T)$, in which case we use the bound $\rho_\ve(x,0,T) \le C P\left(\tau_{x,0} > t \   |X_\ve(0) = x\right) /\ve$ for some constant $C$, which follows from (\ref{AX5}) and Lemma 5.2.  We use the comparison (\ref{H4}) and let $\tau_x$ be the exit time from the interval $[0,\del]$ for the process $Y_\ve(t)$ with $Y_\ve(0) = x$.  Then for a function $f(x)$, if we define $v_\ve(x)$ by
\[	v_\ve(x) = E\left[ \exp \left\{ \int_0^{\tau_x} \ f(Y_\ve(t))dt \right\} | Y_\ve(0) = x \right] \ ,	\]
then $v_\ve(x)$ satisfies
\begin{multline} \label{AZ5}
\ve(1 + x/\ve)^{1/3} v''_\ve(x) - \left[ 1 - (x/L_0)^{1/3} \right] v'_\ve(x) + f(x)v_\ve(x) = 0,\\
 0 < x < \del,  \ v_\ve(0) = v_\ve(\del)=1.
\end{multline}
Observe now that the boundary value problem
\[	\eta u''_\eta(x) - \al u'_\eta(x) + \la u_\eta(x) = 0,  \quad  x > 0,  \ u_\eta(0) = 1,	\]
has solution $u_\eta(x)$ given by the formula
\[	u_\eta(x) = \exp \left[ \frac{\al - \sqrt{\al^2-4\eta \la}}{2\eta} \ x \right], \ \ x > 0,	\]
where we assume $\eta, \al, \la > 0$ and $4\eta \la < \al^2$.  If $\eta \ra 0$ then $u_\eta(x)$ converges to $\exp[\la x/\al]$, corresponding to the fact that the deterministic path starting at $x$ takes time $x/\al$ to hit 0.  It is clear that if we choose $\eta = \ve(1 + \del/\ve)^{1/3}$ and $\al = 1 - (\del/L_0)^{1/3}$ then $v_\ve(x) = u_\eta(x)$ satisfies (\ref{AZ5}) with $f(x) \ge \la$ and $v_\ve(0) = 1$, $v_\ve(\del) > 1$.  We conclude therefore that provided $4\eta \la < \al^2$ there is the inequality,
\[	E\Big[ e^{\la\tau_x}\Big] \le u_\eta(x), \ \ \ 0 < x < \del,		\]
whence 
$$ P(\tau_x > t) \le e^{-\la t} \; u_\eta(x), \ t>0.$$
If we optimize now w.r. to $\la$ we conclude that $P(\tau_x > t)$ satisfies the inequality,
\be \label{EJ5}
P(\tau_x > t) \le \exp \big[ -(t\al - x)^2 / 4\eta t\big], \ \ 0 < x < \al t.
\ee
It follows that if $0 < x < \del$ and $\del$ is sufficiently small, depending on $T$, that $\displaystyle{\lim_{\ve\ra 0}} \; \rho_\ve(x,0,T) = 0$, uniformly in $x$.

We can apply the arguments of the previous two paragraphs to show that for any $\del > 0$ then $\displaystyle{\lim_{\ve\ra 0}} \; \rho_\ve(x,0,T) = 0$ uniformly for $x$ in the interval $0 < x < F(0,T) - \del$.  In fact we have the inequality,
\begin{multline} \label{BA5}
P\left( \tau_{x,0} > t  \ | X_\ve(0) = x \right) \le P \Big(
\begin{array}[t]{c}
{\displaystyle\sup}\\
 {\scriptstyle 0 \le s \le t - \eta} 
\end{array}
| X_\ve(s \wedge \tau_x) 
   - X^c_\ve(s \wedge \tau_x) |> \rho \del/2 \Big) \\
+ 
\begin{array}[t]{c}
{\displaystyle\sup}\\
 {\scriptstyle 0 \le y <(1+\rho)\del/2}
\end{array} 
P(\tau_y > \eta),   
\end{multline}
where $\rho,\eta$ are small positive constants.  The exit time $\tau_y$ in the second term on the RHS of (\ref{BA5}) refers to the exit time from $[0,\del]$ for $Y_\ve(s)$ with $Y_\ve(0) = y$.  The exit time $\tau_x$ in the first term is the time taken for $X_\ve(s)$ with $X_\ve(0) = x$ to exit the interval $[\del/2, a]$, where $a = \sup\{x(s) : 0\le s \le T\}$ and $x(s)$ is the solution to (\ref{A4}) with $x(T) = 0$.  For the solution to (\ref{A4}) with $x(0) = x < F(0,T)-\del$, let $\eta$ be defined by $x(T - \eta) = (1-\rho)\del/2$ for some small $\rho > 0$ to be determined.  Then $X^c_\ve(t-\eta) \le \del/2$ for small $\ve$, whence (\ref{BA5}) holds for sufficiently small $\ve$.  We can choose now $\rho > 0$ small enough so that $\eta \ge (1 + 2\rho)\del/2$.  In that case both terms on the RHS of (\ref{BA5}) go to zero exponentially in $1 /\ve^{2/3}$ as $\ve \ra 0$.  We have therefore proved (\ref{AW5}).

We complete the proof of the theorem by showing that
\be \label{BB5}
\lim_{\ve \ra 0} \int^\infty_0 \rho_\ve(x,0,T)dx = \exp \left[ - \; \frac 1 3 \int^{T}_0 \ \frac{ds}{x(s)^{2/3}L(s)^{1/3}}\right],
\ee
where $x(s)$ is the solution to (\ref{A4}) with $x(T) = 0$.  To do this we write as in (\ref{R5})
\[
\int^\infty_0 \rho_\ve(x,0,T)dx = \int^\infty_0 \rho_\ve(x,T-\del, T) c_\ve(x, T-\del)dx .
\]
Since by our previous argument $\rho_\ve(x, T-\del, T)$ is concentrated at $x \sim \del$, (\ref{BB5}) will follow from Lemma 5.3 if we can show that
\[
\lim_{\ve \ra 0} c_\ve(x,T-\del) = \exp \left[ - \; \frac 1 3 \int^{T-\del}_0 \ \frac{ds}{x(s)^{1/3}L(s)^{1/3}}\right],
\]
where $x(s)$ is the solution to (\ref{A4}) with $x(T-\del) = x \sim \del$.  This follows as before using the representations (\ref{S5}), (\ref{AF5}) for $c_\ve(x,s)$ and comparing solutions of the stochastic equation to solutions of the corresponding deterministic equation. 
\end{proof}

\section{Discussion}
This paper is primarily concerned with the study of a diffusive version of the LSW model and the convergence of its solutions on any finite time interval as the coefficient of diffusion  $\ve$ goes to $0$ to solutions of the classical LSW model. This is a first step towards showing that solutions of the diffusive LSW equation behave similarly to solutions of the LSW equation for arbitrarily large time. In particular, one expects that solutions of the diffusive LSW equation coarsen in a fashion similar  to solutions of LSW. Thus the average cluster volume  is expected to increase linearly in time at large times, and this should be uniform in $\ve$ for small $\ve$. In Theorem 3.2 we obtained a uniform time averaged upper bound on the rate of coarsening. 

Although the effect of the diffusion decreases with time, discussion in the physics literature suggests that it plays an important role in the asymptotic behavior of the solution by acting as a {\it selection principle}. Thus at moderate times the diffusion produces a Gaussian tail to initial data (with compact support for example). It is the tail which determines asymptotic behavior, singling out the  infinitely differentiable self-similar solution of the LSW model.

The paper begins with a derivation of the diffusive LSW model from the Becker-D\"{o}ring model. The least justified step in the derivation is the imposition of a Dirichlet boundary condition, which cuts off interaction between clusters of large volume and clusters of $O(1)$ volume that are almost in equilibrium at large time. The boundary condition is imposed at a particular cluster volume. One can see that this is a significant simplification since it implies that after the condition is imposed,  the monomer density is for all subsequent times strictly larger than its equilibrium value. Despite this issue, the derivation of the diffusive LSW model from the BD model does offer a strategy for attempting to understand the mechanism of coarsening in the BD model. In order to carry it through one will have to understand in a precise way how a boundary condition is imposed on the diffusive LSW model by the BD dynamics.

There has been some previous literature showing in an almost mathematically rigorous way a connection between solutions of the LSW and BD models. The key assumption required in this work is a type of upper bound on the coarsening rate for the BD model. It is not clear how this is related to the boundary condition assumption discussed in the previous paragraph. One should note however that a positive lower bound on the monomer density minus its equilibrium value, which is a consequence of the Dirichlet boundary condition, is a type of upper bound on the coarsening rate. 

\bigskip

\thanks{ {\bf Acknowledgement:} The author would like to thank Peter Smereka and Barbara Niethammer for many helpful conversations. This research was partially supported by NSF
under grants DMS-0500608 and DMS-0553487.

\end{document}